\documentclass{article}

\PassOptionsToPackage{svgnames,dvipsnames}{xcolor}

\usepackage{amsfonts}
\usepackage{nicefrac}       
\usepackage{xfrac}
\usepackage{amsmath}
\usepackage{amsthm}
\usepackage{amssymb}
\usepackage{cancel}
\usepackage{mathtools}
\usepackage{bbm}
\usepackage{bm}

\usepackage{cases}         

\usepackage{mathalpha}
\usepackage{xspace} 

\usepackage{microtype}      
\usepackage{enumitem}

\usepackage[dvipsnames]{xcolor}

\usepackage{pifont}
\renewcommand{\checkmark}{\ding{51}}

\newcommand{\R}{\mathbb{R}}

\definecolor{lightgray}{RGB}{230, 230, 230}
\definecolor{mathred}{RGB}{204, 69, 90}
\definecolor{mathblue}{RGB}{4, 78, 112}
\definecolor{mathgreen}{RGB}{1, 135, 70}

\definecolor{linkcolor}{RGB}{0,120,130}

\usepackage[colorlinks=true,
    linkcolor=linkcolor,
    citecolor=linkcolor,
    filecolor=linkcolor,
    urlcolor=linkcolor,
    pagebackref=true]{hyperref} 
\usepackage{url}

\renewcommand*\backref[1]{\ifx#1\relax \else (Cit. on p. #1) \fi}

\usepackage[capitalize]{cleveref}

\Crefname{algorithm}{Algo.}{Algos.}
\Crefname{theorem}{Thm.}{Thms.}
\Crefname{lemma}{Lemma}{Lems.}
\Crefname{appendix}{Appx.}{Appx.}

\usepackage{amsthm}

\makeatletter
\newtheorem*{rep@theorem}{\rep@title}
\newcommand{\newreptheorem}[2]{%
  \newenvironment{rep#1}[1]{%
    \def\rep@title{#2 \ref{##1}}%
    \begin{rep@theorem}%
      \begingroup
      \let\label\@gobble 
    }{%
      \endgroup
    \end{rep@theorem}}
}
\makeatother

\newcommand{\repeq}[1]{\tag{\ref{#1}}} 

\theoremstyle{plain}
\newtheorem{theorem}{Theorem}

\newreptheorem{theorem}{Theorem}

\newtheorem{lemma}{Lemma}
\newreptheorem{lemma}{Lemma} 
\newtheorem{assumption}[theorem]{Assumption}

\theoremstyle{remark}

\theoremstyle{problem}

\usepackage[textsize=tiny]{todonotes}
\usepackage{mathtools}
\usepackage{multirow}
\usepackage{booktabs}
\usepackage{adjustbox}

\usepackage{caption}
\usepackage{subcaption}

\usepackage{tikz}
\usepackage{pgfplots}
\pgfplotsset{compat=1.17}
\usepackage{standalone}
\usetikzlibrary{shapes,arrows,fit,positioning, calc, decorations.markings}
\definecolor{captiongray}{RGB}{100,100,100}

\usepackage[toc,page,header]{appendix}
\usepackage[nohints]{minitoc}

\usepackage{etoc}

\usepackage{booktabs}
\usepackage{array}
\usepackage{makecell}

\usepackage{microtype}
\usepackage{graphicx}
\usepackage{subcaption}
\usepackage{booktabs} 
\usepackage{dblfloatfix}
\usepackage{enumitem}
\usepackage{hyperref}

\usepackage[accepted]{icml2026}

\usepackage{amsmath}
\usepackage{amssymb}
\usepackage{mathtools}
\usepackage{amsthm}

\usepackage{makecell}

\theoremstyle{plain}

\theoremstyle{definition}

\usepackage[textsize=tiny]{todonotes}

\icmltitlerunning{Accelerated and Stable Convergence with Anchored Generalized Optimistic Method}

\begin{document}

\twocolumn[
  \icmltitle{\fontsize{13}{25}\selectfont Accelerated and Stable Convergence with Anchored Generalized Optimistic Method}



  \icmlsetsymbol{equal}{*}

  \begin{icmlauthorlist}
    \icmlauthor{Motahareh Sohrabi}{1,2}
    \icmlauthor{Jianxin You}{1}
    \icmlauthor{Simon Lacoste-Julien}{1,2,4}
    \icmlauthor{Eduard Gorbunov}{3}
    \icmlauthor{Gauthier Gidel}{1,2,4}
  \end{icmlauthorlist}

  \icmlaffiliation{1}{Université de Montréal}
  \icmlaffiliation{2}{Mila - Quebec AI Institute}
  \icmlaffiliation{3}{Mohammed Bin Zayed University of Artificial Intelligence}
  \icmlaffiliation{4}{CIFAR AI Chair}

  \icmlcorrespondingauthor{Motahareh Sohrabi}{motahareh.sohrabi@mila.quebec}

  \icmlkeywords{Machine Learning, ICML}

  \vskip 0.3in
]



\printAffiliationsAndNotice{}  

\begin{abstract}
  We study first-order methods for solving monotone variational inequalities arising in min-max optimization. Classical approaches such as the extragradient method rely on two gradient queries per iteration, which limits their analysis and applicability in the online and stochastic settings. We propose a family of Generalized Optimistic Methods with Anchoring (GOMA), which combine two-time-scale optimistic updates with an anchoring term inspired by Halpern iteration. In the deterministic setting, GOMA achieves the optimal accelerated last-iterate rate $\mathcal{O}(1/k^2)$ on the squared gradient norm for monotone Lipschitz operators. In the stochastic setting with unbounded variance, a simplified single-call variant of GOMA achieves a last-iterate convergence rate of $\mathcal{O}(1/\sqrt{k})$ on the squared gradient norm. To the best of our knowledge, this is the first such guarantee for stochastic monotone Lipschitz variational inequalities in the unconstrained setting without variance reduction or growing batches.
\end{abstract}
\section{Introduction}

\looseness=-1
Minimax optimization and more generally, Variational Inequality (VI) problems, naturally arise in adversarial training \citep{goodfellow2014explaining, madry2017towards}, constrained optimization \citep{facchinei2003finite} and multi-agent reinforcement learning \citep{sidahmed2024addressing}, where the goal is to find equilibrium solutions under structured interaction of agents or competing objectives. When solving VIs classical gradient descent fails to converge even in simple bilinear games \citep{mertikopoulos2018optimistic}. A breakthrough came with the extragradient method (EG) of \citet{korpelevich1976extragradient}, which introduces a correction step and guarantees convergence under monotonicity. However, (i) EG requires two gradient evaluations per iteration, which is computationally expensive and makes it impractical in online or stochastic environments \citep{NEURIPS2020_eea5d933}. Moreover, subsequent work revealed that (ii) EG may fail in adversarial or stochastic regimes, motivating two-time-scale methods, such as DSEG algorithm \citep{NEURIPS2020_ba9a56ce}. Above all (iii) EG has a last-iterate convergence guarantee of $\mathcal{O}(1/k)$ for Monotone and Lipschitz operator in terms of squared operator norm, which is not optimal.

The optimistic method \citep{popov1980modification} reduces the per-iteration cost to a single gradient call by leveraging past gradients, alleviating the computational burden of EG. Generalized optimistic methods further improve robustness in adversarial and stochastic regimes through a two-time-scale design. Meanwhile, anchoring, inspired by the Halpern fixed-point iteration \citep{Halpern1967FixedPO, lieder2020convergence}, has emerged as an effective mechanism for accelerating VI algorithms and improving last-iterate convergence guarantees. We therefore introduce the Generalized Optimistic Method with Anchoring (GOMA), which combines these ideas to achieve low per-iteration complexity, robustness, and accelerated last-iterate convergence.

Our contributions are: 
\begin{itemize}[leftmargin=*]
    \item We introduce GOMA, combining two-time-scale optimistic updates with Halpern-type anchoring. 
    \item In the deterministic setting with monotone Lipschitz operators, we prove that GOMA attains an accelerated last-iterate convergence rate of $\mathcal{O}(1/k^2)$ in the squared operator norm, matching the complexity lower bound. 
    \item In the stochastic setting, we prove that a simplified variant of GOMA achieves a last-iterate convergence rate of $\mathcal{O}(1/\sqrt{k})$ on the squared operator norm under state-dependent noise. To the best of our knowledge, this is the first last-iterate guarantee in squared operator norm for stochastic monotone Lipschitz VIs in the unconstrained setting without variance reduction or growing batches.
\end{itemize}


\section{Preliminaries}
\label{sec:prelim}
Given a  vector field
$G:\mathbb{R}^d\to\mathbb{R}^d$, 
we study unconstrained \emph{variational inequality} (VI) problems defined as:
\begin{equation}
\label{eq:vi}
\text{find } x^\star\in\mathbb{R}^d \quad  \text{such that} \quad
G(x^\star)=0 .
\tag{VI}
\end{equation}
Throughout the paper, we measure convergence using the \emph{last-iterate squared residual}
$\|G(x_k)\|^2$.

\textbf{Assumptions.}
$G$ is \emph{monotone} and \emph{$L$-Lipschitz}:
\begin{align*}
\langle G(x)-G(y),\,x-y\rangle &\ge 0 , \qquad &&\forall x,y\in\mathbb{R}^d,\\
\|G(x)-G(y)\| &\le L\|x-y\| , &&\forall x,y\in\mathbb{R}^d .
\end{align*}
These assumptions characterize the standard class of monotone variational inequalities studied in first-order methods \citep{korpelevich1976extragradient,nemirovski2004prox}.

\textbf{Saddle-point problems.}
A central example of~\eqref{eq:vi} arises from saddle-point (min–max) optimization.
We consider
\[
\min_{x\in\mathbb{R}^d}\;\max_{y\in\mathbb{R}^d}\; f(x,y),
\]
where $f:\mathbb{R}^d\times\mathbb{R}^d\to\mathbb{R}$ is continuously differentiable.
Define $z=(x,y)\in\mathbb{R}^d\times\mathbb{R}^d$, and introduce the \emph{gradient operator}
\[
G(z)
\;:=\;
\begin{pmatrix}
\nabla_x f(x,y)\\[2pt]
-\nabla_y f(x,y)
\end{pmatrix}.
\]
Under the monotonicity of $G$ (equivalently, $f$ convex–concave), $z^\star = (x^\star, y^\star)$ is a saddle point if and only if $G(z^\star) = 0$. So, solving the saddle-point problem is equivalent to solving the variational inequality problem ~\eqref{eq:vi}.

In modern machine-learning applications of saddle-point problems, the variables $z=(x,y)$ parameterize neural networks and the saddle point lies in unbounded Euclidean space, making the squared operator norm $\|G(z_k)\|^2$ the de facto stationarity measure.
By contrast, classical algorithmic-game-theory settings with intrinsic bounded strategy spaces (e.g., matrix games on the simplex) use the gap function $\mathrm{GAP}(z) = \sup_{z'\in X}\langle G(z), z' - z\rangle$ \citep{nesterov2007dual} as the progress measure. The gap function requires a bounded domain, since the supremum diverges on $\mathbb{R}^d$, gives rise to fundamentally different analyses \citep{cai2022tight,abe2025boosting} and is suitable for constrained games' convergence analysis.

\section{Related Work}

\subsection{Algorithms for Solving Variational Inequalities} Solving variational inequalities \eqref{eq:vi}, with standard gradient descent can exhibit oscillatory behavior \citep{platt1988constrained, gidel2018variational}. A classical algorithm for addressing this behavior is the extragradient method \citep{korpelevich1976extragradient}, given by
\begin{equation}\tag{EG}\label{eq:eg}
\begin{aligned}
y_{k} &= x_k - \eta_k G (x_k) \\
x_{k+1} &= x_k - \eta_k G(y_{k}).
\end{aligned}
\end{equation}
For monotone Lipschitz operators, \eqref{eq:eg} achieves an \textit{ergodic} (average) rate of \(\mathcal{O}(1/k)\) duality gap \citep{nesterov2007dual}. This rate is optimal and matches the lower bound of \(\Omega(1/k)\)from \citet{nemirovski2004prox}.

However, a method may have an \textit{ergodic} convergence rate but no \textit{last-iterate} convergence: finite regret ensures convergence of the ergodic averages, while the iterates themselves may cycle indefinitely and not converge \cite{pmlr-v125-bailey20a}.

The last-iterate convergence rate of extragradient in terms of  squared operator norm for the same class of operators is \(\mathcal{O}(1/k)\) \citep{gorbunov2022extragradient}. This rate is not optimal, as the lower bound for this class is  \(\mathcal{O}(1/k^2)\) \citep{pmlr-v139-yoon21d}. As a result there exist several accelerated methods that achieve a rate of \(\mathcal{O}(1/k^2)\) \citep{yoon2024optimalaccelerationminimaxfixedpoint, lee2021fast, TranDinh2021HalpernTypeAA}. 

Despite their favorable convergence properties, extragradient methods rely on two operator evaluations per iteration. This is misaligned with the online-learning setting, which provides only a single gradient at the chosen action \cite{NEURIPS2020_eea5d933}. Optimistic gradient methods address this limitation by using extrapolation from past gradients rather than additional oracle queries \cite{NEURIPS2020_eea5d933,popov1980modification}.
\begin{equation}\tag{OM}\label{eq:og}
\begin{aligned}
y_{k} &= x_k - \eta_k G (y_{k-1}) \\
x_{k+1} &= x_k - \eta_k G(y_{k}).
\end{aligned}
\end{equation}
The optimistic gradient method achieves an \(\mathcal{O}(1/k)\) last-iterate convergence rate for monotone Lipschitz operators \citep{NEURIPS2022_893cd874,cai2022tight}. Several works have proposed ways to accelerate this rate, including \citet{TranDinh2021HalpernTypeAA, sedlmayer2023fast, cai2023accelerated}. Our work contributes to this line of research by studying \textit{last-iterate} acceleration for a class of \textit{optimistic gradient based} methods and providing stochastic convergence analysis.

\subsection{Two-Time-Scale Methods}
\looseness=-1

To improve last-iterate convergence and stability, two-time-scale strategies were introduced for extragradient-type dynamics in stochastic regimes where single-scale methods may fail to converge. In particular, \citet{NEURIPS2020_ba9a56ce} showed that, in the stochastic setting, using a larger step size for the extrapolation step than for the correction step prevents the failure of convergence of extragradient and yields almost sure last-iterate convergence at a rate of up to \(\mathcal{O}(1/k)\) in affine problems. 

Subsequently, \citet{lee2021fast} proposed the Fast Extragradient (FEG), which extends the two-time-scale idea to smooth problems under a negative comonotonicity assumption, achieving an accelerated \(\mathcal{O}(1/k^2)\) rate and stochastic convergence guarantees with growing batch-size.
These results show that time-scale decoupling is an effective mechanism for stabilizing and accelerating extragradient methods.

The same principle can be applied to optimistic, single-query methods. Accordingly, \citet{pmlr-v108-mokhtari20a} introduced the generalized optimistic method, which allows separate step sizes for the prediction and correction steps,
\begin{equation}\tag{Generalized OM}\label{eq:gog}
\begin{aligned}
y_{k} &= x_k - \gamma_k G (y_{k-1}), \\
x_{k+1} &= x_k - \eta_k G(y_{k}).
\end{aligned}
\end{equation}
In parallel, \citet{stooke2020responsive} and \citet{pmlr-v235-sohrabi24a} showed the effectiveness of proportional–integral (PI) controllers for solving the Lagrangian saddle-point formulation of constrained optimization problems. \citet{pmlr-v235-sohrabi24a} further showed that this PI-controller dynamics is equivalent to the generalized optimistic method of \citet{pmlr-v108-mokhtari20a}, revealing two-time-scale optimistic algorithms as an effective feedback-control system.

\subsection{Halpern-Type Acceleration}

A principal mechanism for \textit{accelerating} first-order methods in monotone variational inequalities is Halpern-type anchoring. The Halpern method \citep{Halpern1967FixedPO} was originally proposed for solving fixed-point problems
\[
z = T(z),
\]
where \(T:\mathbb{R}^d\to\mathbb{R}^d\) is a nonexpansive operator. Its classical iteration is
\begin{equation}
z_{k+1}
= \beta_k z_0 + (1-\beta_k)\,T(z_k),\nonumber
\end{equation}
where \(\beta_k\in(0,1)\) decreases to zero. To solve monotone variational inequalities \(G(z)=0\), with $G(z)$ is the gradient operator, a standard construction is to take
\(
T = (I+\alpha G)^{-1},
\)
the resolvent of \(G\), which is firmly nonexpansive when \(G\) is monotone~\citep{bauschke2020correction}. However, computing the resolvent is generally significantly more expensive than an explicit update using $G$ and therefore is outside the scope of this paper, which is focused on first-order methods.

Modern algorithms therefore use a \emph{Halpern-type anchoring} written directly in terms of the operator \(G\):
\begin{equation}\tag{Anchoring}\label{eq:halpern-type} 
\begin{aligned}
z_{k+1}
= z_k - \alpha_k G(z_k) + \beta_k (z_0 - z_k).
\end{aligned}
\end{equation}
This formulation can be viewed as a first-order realization of the classical Halpern iteration, as explained in \cite{pmlr-v125-diakonikolas20a} which is the anchoring mechanism used throughout this paper.

\textbf{Why anchoring helps.}
The anchoring update can be interpreted as a gradient step on a \emph{regularized} operator:
\[
\widetilde G_k(z) \;=\; G(z) + \tfrac{\beta_k}{\alpha_k}(z - z_0).
\]
Then \eqref{eq:halpern-type} can be written as a forward step with respect to \(\widetilde G_k\):
\[
z_{k+1}
= z_k - \alpha_k \widetilde G_k(z_k),
\]
where, \(\widetilde G_k\) is \(\tfrac{\beta_k}{\alpha_k}\)-strongly monotone, i.e.,
\[ 
\begin{aligned} 
&\langle \widetilde G_k(x)-\widetilde G_k(y),\,x-y\rangle \\
& = \langle G(x)-G(y),\,x-y\rangle + \tfrac{\beta_k}{\alpha_k}\|x-y\|^2 \;\ge\; \tfrac{\beta_k}{\alpha_k}\|x-y\|^2. \end{aligned} 
\]
Thus, the anchoring term endows the operator with an \emph{artificial strong monotonicity} that accelerates convergence. As \(\beta_k \downarrow 0\), this regularization vanishes, recovering the original monotone problem while providing acceleration in the transient regime.

\textbf{Connection to weight decay.} The regularizer $\tfrac{\beta_k}{\alpha_k}(z - z_0)$ in $\widetilde G_k$ is an $\ell_2$ (weight-decay) penalty \citep{
NIPS1991_8eefcfdf, loshchilov2018decoupled}, but centered at the initialization $z_0$ rather than the origin and applied with a vanishing coefficient. A constant weight-decay coefficient would shift the fixed point and bias the solution toward $z_0$; the decay $\beta_k \downarrow 0$ instead removes this bias asymptotically. Anchoring therefore provides the transient stabilization of weight decay, the artificial strong monotonicity noted above, while still converging to a solution $G(z^\star)=0$ of the original problem.

Anchoring mechanism has emerged as the key mechanism for last-iterate acceleration of variational inequalities. Methods like \textit{Extra Anchored Gradient (EAG)} algorithm \citep{pmlr-v139-yoon21d}, \textit{Fast Extragradient (FEG)} algorithm \citep{lee2021fast}, and \textit{Anchored Popov} \citep{TranDinh2021HalpernTypeAA} all use anchoring mechanism to achieve acceleration. The last-iterate behavior of anchored (simultaneous) gradient descent-ascent has likewise been studied \citep{ryu2019ode, surina2026improved}.

Anchoring has also been applied in reinforcement learning, where it provides stability and accelerates convergence. \citet{sokotaunified} shows that anchoring connects RL, quantal response equilibria, and zero-sum games by damping oscillations and guiding updates toward equilibria. More recently, anchoring has been used to accelerate value iteration \cite{NEURIPS2023_a8f2713b}, yielding faster convergence without sacrificing optimality. These results highlight anchoring as a general mechanism for stabilizing and accelerating learning in sequential decision-making.

\section{Generalized Optimistic Method with Anchoring (GOMA)}\label{sec:deterministic}

In this section, we introduce the \textit{Generalized Optimistic Method with Anchoring (GOMA)}, a family of algorithms that equips classical optimistic gradient method \eqref{eq:og} with two-time-scale update \eqref{eq:gog} and the anchoring mechanism  \eqref{eq:halpern-type}. 

\textbf{Generalized Optimistic Method with Anchoring:}
\begin{equation}\tag{GOMA}\label{eq:goma}
\begin{aligned}
y_{k} &= \beta_k x_0 + (1 - \beta_k)x_k - \gamma_k G (y_{k-1}), \\
x_{k+1} &= \beta_k x_0 + (1 - \beta_k)x_k - \eta_k G(y_{k}).
\end{aligned}
\end{equation}

Here, $\gamma_k$ and $\eta_k$ denote the step sizes for the exploration and update steps, respectively. The coefficient $\beta_k \in [0, 1)$ is the anchoring parameter, which gradually decays to zero as $k \to \infty$. Throughout this paper, we assume
$$\beta_k = \frac{a}{k + b} \quad \text{for} \quad b>a\geq0.$$
Several well-known algorithms arise as special cases of GOMA:
\begin{itemize}
    \item Setting $\beta_k = 0$ recovers the \textit{generalized optimistic method} \citep{pmlr-v108-mokhtari20a}.
    \item Setting $\gamma_k = \eta_k$ yields the \textit{anchored Popov algorithm} \citep{TranDinh2021HalpernTypeAA}.
    \item Setting both $\beta_k = 0$ and $\gamma_k = \eta_k$ reduces the scheme to \textit{classical Popov method} \citep{popov1980modification}, also known as the \textit{optimistic method} or \textit{past extragradient (PEG)}.
\end{itemize}

\subsection{Proof Outline}

To prove the convergence of \eqref{eq:goma}, we follow the standard potential-based analysis and construct the potential function \cref{eq:potential}, which will serve as the basis for the descent argument of $\|G(x_k)\|$.
\begin{equation}\label{eq:potential}
\begin{aligned}
     V_k = a_k\|G(x_k)\|^2 &+ b_k\langle G(x_k),\,x_k - x_0\rangle\\
     &+ c_kL^2\|x_k - y_{k-1}\|^2,
\end{aligned}
\end{equation}

where $a_k = c_k = \frac{b_k\eta_k}{2\beta_k}$, and $b_{k+1} = \frac{b_k}{1 - \beta_k}$.

To simplify hyperparameter tuning while preserving the benefits of a two-time-scale design, we consider two parameter setups:
(I) we fix the update step size to \(\eta_*\) and set the exploration step size to \(\gamma_k = (1 - \beta_k)\eta_*\); and
(II) we fix the exploration step size to \(\gamma_*\) and set the update step size to \(\eta_k = (1 - \beta_k)\gamma_*\).
Each setup may be preferable depending on whether a larger exploration step size or a larger update step size is desired.

\textbf{Case I: larger update step.}

We first study the schedule with a larger update step size, $\eta_k=\eta_\ast$ and exploration scaled by $(1-\beta_k)$. The next lemma states that the one-step potential function decrease holds whenever $(\eta_k,\beta_k)$ satisfy three elementary conditions that arise from bounding Lipschitz and monotonicity cross-terms.

\begin{lemma}[One-step potential decrease]\label{lemma:potential_eta} {\normalfont [Proof in \cref{appx:lemma1}.]}
Let $G$ be monotone and $L$-Lipschitz, and consider the iterates
\begin{equation}\tag{$\triangle$}\label{eq:goma_triangle}
\begin{aligned}
y_k &= \beta_kx_0 + (1-\beta_k)x_k - \eta_*(1-\beta_k) G(y_{k-1}), \\
x_{k+1} &= \beta_kx_0 + (1-\beta_k)x_k - \eta_* G(y_k).
\end{aligned}
\end{equation}

 We prove that the potential function \eqref{eq:potential} is decreasing if the step size $\eta_k$ satisfies the following conditions:
\begin{equation}
\eta_{k+1}\;\le\;\frac{\beta_{k+1}}{2M\,\eta_k\,\beta_k(1-\beta_k)},\label{eq:cond1}
\end{equation}
\begin{equation}
    1 - 2M\eta_k^2(1 - \beta_k)^2 - M\eta_k^2\beta_k^2 \ge 0,\label{eq:cond2}
\end{equation}
\begin{equation}
\hspace{-0.7em}\scalebox{0.85}{$\displaystyle
\eta_{k+1}
\le
\frac{\beta_{k+1}}{2\,\beta_k(1-\beta_k)}
\Bigl[\frac{2(1 - \beta_k^2) -4M\eta_k^2(1 - \beta_k)^2 -\beta_k^2}
{1 - 2M\eta_k^2(1 - \beta_k)^2 - M\eta_k^2\beta_k^2}\eta_k
\Bigr],$}
\label{eq:cond3}
\end{equation}
for $M := 2L^2(1+\theta)$ and $\theta \ge 0$. With $\widetilde c_{k+1}\ge0$ for $\theta=2$, these give:
$$V_k-V_{k+1}\ge \widetilde c_{k+1}L^2\|x_{k+1}-y_k\|^2.$$

Conditions of \cref{eq:cond1}, \cref{eq:cond2}, \cref{eq:cond3} are satisfied, with constant step size $\eta_k = \eta_* \in (0, \frac{1}{2\sqrt{3}L})$, and the choice of $\beta_k = \frac{2}{k+6}$.
\end{lemma}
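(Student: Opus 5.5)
We follow the standard Halpern-type potential argument used for anchored extragradient/Popov schemes. The plan is to expand $V_k-V_{k+1}$, substitute the update rules \eqref{eq:goma_triangle}, and show the difference is a sum of manifestly nonnegative quantities plus a multiple of $L^2\|x_{k+1}-y_k\|^2$.

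\textbf{Step 1: rewrite the recursion.} From \eqref{eq:goma_triangle} we obtain
\[
x_{k+1}-x_k=\beta_k(x_0-x_k)-\eta_*G(y_k),\qquad x_{k+1}-y_k=-\eta_*\bigl(G(y_k)-(1-\beta_k)G(y_{k-1})\bigr).
\]
We also need the anchor identity
\[
(1-\beta_k)(x_k-x_0)=x_{k+1}-x_0+\eta_*G(y_k).
\]
Because $b_{k+1}(1-\beta_k)=b_k$, this identity lets us write $b_k\langle G(x_k),x_k-x_0\rangle$ and $b_{k+1}\langle G(x_{k+1}),x_{k+1}-x_0\rangle$ with a common anchor direction. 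Their difference then reduces to $b_{k+1}\langle G(x_k)-G(x_{k+1}),x_{k+1}-x_0\rangle$ plus terms involving $\eta_*G(y_k)$.

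\textbf{Step 2: use monotonicity on that difference.} Write $x_{k+1}-x_0$ as $x_k-x_0$ plus the step. Monotonicity $\langle G(x_{k+1})-G(x_k),x_{k+1}-x_k\rangle\ge0$ absorbs the step part. The remaining anchor inner products are then eliminated using $x_k-x_0=\frac{1}{\beta_k}\bigl(x_k-x_{k+1}-\eta_*G(y_k)\bigr)$. This is exactly where the factor $b_k\eta_*/\beta_k$ appears, which explains the choice $a_k=c_k=b_k\eta_k/(2\beta_k)$. After this substitution, $V_k-V_{k+1}$ is a quadratic form in the vectors $G(x_k)$, $G(x_{k+1})$, $G(y_k)$, $G(y_{k-1})$.

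\textbf{Step 3: bound the cross terms.} We control the Lipschitz-type errors with
\[
\|G(x_{k+1})-G(y_k)\|^2\le L^2\|x_{k+1}-y_k\|^2,\qquad \|G(x_k)-G(y_{k-1})\|^2\le L^2\|x_k-y_{k-1}\|^2.
\]
Where differences of three vectors appear, we apply Young's inequality $\|u+v\|^2\le(1+\theta)\|u\|^2+(1+1/\theta)\|v\|^2$; this is the source of $M=2L^2(1+\theta)$. Using the expression for $x_{k+1}-y_k$ from Step 1, the term $\|x_{k+1}-y_k\|^2$ is expressed through $\eta_*^2\|G(y_k)-(1-\beta_k)G(y_{k-1})\|^2$. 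The $c_k$ term of $V_k$ pays for $\|x_k-y_{k-1}\|^2$.

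\textbf{Step 4: collect coefficients.} Grouping coefficients gives a quadratic form whose nonnegativity reduces to three conditions. First, the coefficient of $\|G(x_{k+1})\|^2$ must be nonnegative, i.e.\ $a_{k+1}$ must not exceed what is produced; this is \eqref{eq:cond1}. Second, a diagonal entry must be nonnegative; this is \eqref{eq:cond2}. Third, completing the square in the remaining cross term (a Schur-complement / determinant condition) gives \eqref{eq:cond3}. Whatever slack remains in front of $L^2\|x_{k+1}-y_k\|^2$ defines $\widetilde c_{k+1}$.

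\textbf{Step 5: verify the conditions for the stated schedule.} We take $\beta_k=\frac{2}{k+6}$, so $\frac{\beta_{k+1}}{\beta_k(1-\beta_k)}=\frac{k+6}{k+4}\ge1$, and $\theta=2$, so $M=6L^2$. With $\eta_*<\frac1{2\sqrt3 L}$ we have $M\eta_*^2<\tfrac12$, and then:
\begin{itemize}
\item \eqref{eq:cond2} holds because $2M\eta_*^2(1-\beta_k)^2+M\eta_*^2\beta_k^2\le 2M\eta_*^2<1$.
\item \eqref{eq:cond1} requires $2M\eta_*^2\le \frac{k+6}{k+4}$, which holds since $2M\eta_*^2<1$.
\item \eqref{eq:cond3} reduces to $\frac{k+6}{2(k+4)}\cdot R\ge1$, where $R$ is the bracketed ratio. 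Since $\beta_k\le1/3$, one checks that $2(1-\beta_k^2)-4M\eta_*^2(1-\beta_k)^2-\beta_k^2\ge 2\bigl(1-2M\eta_*^2(1-\beta_k)^2-M\eta_*^2\beta_k^2\bigr)$ up to terms of order $\beta_k^2$, which the factor $\frac{k+6}{k+4}$ covers.
\end{itemize}
Finally, we confirm $\widetilde c_{k+1}\ge0$ at $\theta=2$.

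\textbf{Main obstacle.} The main difficulty is the algebra in Steps 2–4: choosing which inner products to split with Young's inequality so that the three conditions come out precisely, and so that the time-varying optimistic term $(1-\beta_k)G(y_{k-1})$ matches the anchor factor $1-\beta_k$. The first thing to try is to express everything in terms of $G(y_k)-(1-\beta_k)G(y_{k-1})$ and $G(x_{k+1})-G(y_k)$.
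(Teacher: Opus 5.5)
Your skeleton is the paper's: the identities for $x_{k+1}-x_k$ and $x_{k+1}-y_k$, monotonicity between $x_k$ and $x_{k+1}$ scaled by $b_k/\beta_k$, Lipschitz bounds, and a $2\times2$ PSD test. However, the mechanism you give in Step~3 is not the one that produces \eqref{eq:cond1}--\eqref{eq:cond3}.

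The parameter $\theta$ is not a Young weight. The paper adds $\theta L^2\|x_{k+1}-y_k\|^2$ to both sides of $\|G(x_{k+1})-G(y_k)\|^2\le L^2\|x_{k+1}-y_k\|^2$. It rewrites the right side as $(1+\theta)L^2\eta_*^2\|G(y_k)-(1-\beta_k)G(y_{k-1})\|^2$ and bounds this with two plain splits $\|u+v\|^2\le2\|u\|^2+2\|v\|^2$. The first split is on $(1-\beta_k)(G(y_k)-G(y_{k-1}))+\beta_kG(y_k)$. The second is on $G(y_k)-G(y_{k-1})$ through $G(x_k)$, with $\|G(x_k)-G(y_{k-1})\|\le L\|x_k-y_{k-1}\|$ paid by $c_k$.

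That is where $M=2L^2(1+\theta)$ and the factors $2M$ come from. The reserved $\theta L^2\|x_{k+1}-y_k\|^2$ is what offsets $-c_{k+1}L^2\|x_{k+1}-y_k\|^2$ from $V_{k+1}$, and this is what $\widetilde c_{k+1}\ge0$ refers to. Multiplying the resulting inequality by $a_k/(2M\eta_*^2(1-\beta_k)^2)$ and adding it cancels $a_k\|G(x_k)\|^2$, and with $c_k=a_k$ it also cancels the $\|x_k-y_{k-1}\|^2$ term. The choice $a_k=b_k\eta_*/(2\beta_k)$ then kills the remaining $\langle G(x_{k+1})-G(x_k),G(y_k)\rangle$ term. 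What is left is a form in $(G(x_{k+1}),G(y_k))$ alone, and nonnegativity of its two diagonal entries and its determinant is exactly \eqref{eq:cond1}, \eqref{eq:cond2}, \eqref{eq:cond3}.

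A split with weights $(1+\theta)$ and $(1+1/\theta)$ gives different coefficients. It is also undefined at $\theta=0$, which the statement allows. So the bookkeeping you describe would not reproduce these conditions.

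A smaller point on Step~2: the ``step part'' equals $-b_{k+1}\langle G(x_{k+1})-G(x_k),x_{k+1}-x_k\rangle\le0$, so monotonicity cannot absorb it on its own. The argument survives only because substituting $x_k-x_0$ produces $+\frac{b_{k+1}}{\beta_k}$ times the same quantity, for a net coefficient $b_k/\beta_k\ge0$.

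Your verification of \eqref{eq:cond3} also fails as written.

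\emph{The prefactor is miscomputed.} In fact $\frac{\beta_{k+1}}{\beta_k(1-\beta_k)}=\frac{(k+6)^2}{(k+4)(k+7)}$, not $\frac{k+6}{k+4}$. This is harmless for \eqref{eq:cond1}, since the correct value is still $\ge1$. But the prefactor in \eqref{eq:cond3} is $\frac{1}{(2+\beta_k)(1-\beta_k)}\approx\frac12(1+\frac{\beta_k}{2})$, which is half the first-order boost you assumed.

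\emph{The ``order $\beta_k^2$'' reasoning breaks down.} Write $t=M\eta_*^2$. The numerator falls short of twice the denominator by $\beta_k^2(3-2t)$. However, the denominator $1-t\bigl(2(1-\beta_k)^2+\beta_k^2\bigr)$ shrinks to about $2\beta_k$ as $t\to\frac12$. So the ratio loses order $\beta_k$, not $\beta_k^2$.

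\emph{The check has to be exact.} The ratio is decreasing in $t$, since its derivative has numerator $-\beta_k^2(2-3\beta_k)^2$. At $t=\frac12$ the condition reads $\frac{4-5\beta_k}{2-\frac32\beta_k}\ge(2+\beta_k)(1-\beta_k)$, which is equivalent to $\frac12\beta_k^2(3\beta_k-1)\le0$. This holds precisely because $\beta_k\le\frac13$. It becomes an equality at $k=0$ in the limit $\eta_*\to\frac{1}{2\sqrt3L}$, so there is no slack for a heuristic argument.

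The remaining check $\widetilde c_{k+1}\ge0$ is routine. It needs $2M\eta_*^2\le\theta\frac{k+4}{k+7}$, whose worst case $\frac{4\theta}{7}$ exceeds $1>2M\eta_*^2$ when $\theta=2$.
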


\begin{theorem}\label{thm:main}{\normalfont [Proof in \cref{appx:thm:main}.]}
Suppose $G$ is monotone and $L$-Lipschitz continuous. Consider the updates of \cref{eq:goma_triangle}
With the parameter choices
\[
\beta_k = \frac{2}{k+6}, 
\quad 
\eta_* \in \Bigl(0, \tfrac{1}{2\sqrt{3}L}\Bigr),
\]
\[
\begin{aligned}
a_k = c_k
&= \frac{b_0\,\eta_*}{80}\,(k+4)(k+5)(k+6),\\
b_k
&= \frac{b_0}{20}\,(k+4)(k+5)
\end{aligned}
\]
the potential function's decrease from \cref{lemma:potential_eta} implies the bound
\begin{equation}\label{eq:simple_bound}
\|G(x_k)\|^2 
\;\le\; \frac{16/\eta_*^2 + 72L^2}{(k+6)^2}\,\|x_0 - x^\star\|^2 .
\end{equation}
Moreover, the constant $16/\eta_*^2+72L^2$ is decreasing in $\eta_*$, so it is smallest at the largest step; as $\eta_*\to\frac{1}{2\sqrt{3}L}$ it gives
\begin{align}\label{eq:edge_bound1}
\|G(x_k)\|^2 
&\le \frac{264L^2}{(k+6)^2}\,\|x_0-x^\star\|^2.
\end{align}
\end{theorem}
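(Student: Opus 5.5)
The plan is to telescope the one-step decrease of \cref{lemma:potential_eta} to get $V_k \le V_0$, lower bound $V_k$ using monotonicity, and then solve the resulting quadratic inequality in $\|G(x_k)\|$. Write $D:=\|x_0-x^\star\|$ throughout.

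\textbf{Step 1: the coefficients are consistent with the lemma.} With $\beta_k=\frac{2}{k+6}$ we have $1-\beta_k=\frac{k+4}{k+6}$. Hence $b_{k+1}/b_k=\frac{(k+5)(k+6)}{(k+4)(k+5)}=\frac{1}{1-\beta_k}$, which is the required recursion. Also
\[
\frac{b_k\eta_*}{2\beta_k}=\frac{b_0\eta_*}{80}(k+4)(k+5)(k+6)=a_k=c_k .
\]
Since $\eta_*<\frac{1}{2\sqrt3 L}$ and $\beta_k=\frac{2}{k+6}$, \cref{lemma:potential_eta} applies. It gives $V_{k+1}\le V_k-\widetilde c_{k+1}L^2\|x_{k+1}-y_k\|^2\le V_k$, so $V_k\le V_0$ for all $k$.

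\textbf{Step 2: bounding $V_0$.} Initialize $y_{-1}=x_0$. Then the inner-product and last terms of $V_0$ vanish, and Lipschitzness with $G(x^\star)=0$ gives
\[
V_0=a_0\|G(x_0)\|^2\le a_0L^2D^2 .
\]
Here $a_0=\frac{120}{80}b_0\eta_*$.

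\textbf{Step 3: lower bound on $V_k$.} Drop the nonnegative $c_k$ term. By monotonicity and $G(x^\star)=0$ we have $\langle G(x_k),x_k-x^\star\rangle\ge0$, so
\[
\langle G(x_k),x_k-x_0\rangle\ge\langle G(x_k),x^\star-x_0\rangle\ge-\|G(x_k)\|\,D .
\]
Young's inequality gives $b_k\|G(x_k)\|D\le \frac{a_k}{2}\|G(x_k)\|^2+\frac{b_k^2D^2}{2a_k}$. Therefore
\[
\frac{a_k}{2}\|G(x_k)\|^2-\frac{b_k^2D^2}{2a_k}\le V_k\le V_0,
\]
which rearranges to $\|G(x_k)\|^2\le \frac{b_k^2}{a_k^2}D^2+\frac{2V_0}{a_k}$.

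\textbf{Step 4: plugging in.} We have $\frac{b_k}{a_k}=\frac{2\beta_k}{\eta_*}=\frac{4}{\eta_*(k+6)}$, so the first term equals $\frac{16}{\eta_*^2(k+6)^2}D^2$. For the second term,
\[
\frac{2V_0}{a_k}\le\frac{2a_0L^2D^2}{a_k}=\frac{240\,L^2D^2}{(k+4)(k+5)(k+6)}\le \frac{12L^2D^2}{(k+6)^2},
\]
using $(k+4)(k+5)\ge 20$. Since $12\le72$, this gives \eqref{eq:simple_bound}, with some slack in the constant.

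\textbf{Step 5: monotonicity in the step size and the edge bound.} The constant $16/\eta_*^2+72L^2$ is clearly decreasing in $\eta_*$. Substituting $1/\eta_*^2\to12L^2$ gives $192L^2+72L^2=264L^2$, which is \eqref{eq:edge_bound1}.

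The main obstacle is already handled by \cref{lemma:potential_eta}. The delicate point here is the initialization: choosing $y_{-1}=x_0$ makes $V_0$ proportional to $a_0$, so that $V_0/a_k$ decays cubically. It also requires $b_0>0$ so that the division by $a_k$ is legitimate, and $b_0$ then cancels out of the final bound.
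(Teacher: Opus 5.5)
Your argument follows the same route as the paper: telescope the decrease from \cref{lemma:potential_eta} to get $V_k\le V_0=a_0\|G(x_0)\|^2$, lower-bound $V_k$ using monotonicity and Young's inequality, and conclude $\|G(x_k)\|^2\le \frac{b_k^2}{a_k^2}D^2+\frac{2V_0}{a_k}$. Steps 1--3 are correct, and so is the first half of Step 4 (the $\frac{16}{\eta_*^2(k+6)^2}D^2$ term and $\frac{2V_0}{a_k}\le\frac{240L^2D^2}{(k+4)(k+5)(k+6)}$). The last inequality of Step 4, however, is false. The bound $(k+4)(k+5)\ge 20$ only yields
\[
\frac{240}{(k+4)(k+5)(k+6)}\le\frac{12}{k+6},
\]
which is an $\mathcal{O}(1/k)$ term and would destroy the accelerated rate. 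The bound you actually wrote, $\frac{240}{(k+4)(k+5)(k+6)}\le\frac{12}{(k+6)^2}$, already fails at $k=0$, where the left side is $2$ and the right side is $\tfrac13$.

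The correct estimate uses that $\frac{k+6}{(k+4)(k+5)}$ is decreasing for $k\ge0$. Equivalently, $(k+4)(k+5)-\tfrac{10}{3}(k+6)=k^2+\tfrac{17}{3}k\ge0$, with equality at $k=0$. This gives
\[
\frac{240}{(k+4)(k+5)(k+6)}\le\frac{72}{(k+6)^2}.
\]
This is exactly the paper's step, which it writes as $(k+4)(k+5)(k+6)\ge\tfrac59(k+6)^3$ together with $k+6\ge6$. So the constant $72L^2$ in the theorem is attained at $k=0$ and is sharp for this argument; there is no slack, contrary to your remark. With this repair, your Step 5 arithmetic ($192L^2+72L^2=264L^2$) and the rest of your proof go through unchanged.
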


The bound \eqref{eq:simple_bound} gives an $\mathcal{O}(1/k^2)$ decay of the residual $\|G(x_k)\|^2$ with explicit constants and a single scalar hyperparameter $\eta_\ast$.

\textbf{Case II: larger exploration step.}
We next analyze the complementary schedule with a larger exploration step size, keeping the update scaled by $(1-\beta_k)$. This variant can be preferable when exploration requires a larger look-ahead while updates must remain conservative.

\begin{lemma}\label{lemma:potential_gamma}{\normalfont [Proof in \cref{appx:potential_gamma}.]}
Let $G$ be monotone and $L$-Lipschitz, and consider the iterates
\begin{equation}\tag{$\square$}\label{eq:goma_square}
\begin{aligned}
y_k &= \beta_k x_0 + (1-\beta_k)x_k - \gamma_*\, G(y_{k-1}), \\
x_{k+1} &= \beta_k x_0 + (1-\beta_k)x_k - (1-\beta_k)\gamma_*\, G(y_k).
\end{aligned}
\end{equation}

The potential in \eqref{eq:potential} is non-increasing for the algorithm if the following conditions are satisfied:
\begin{align}
\gamma_{k+1}
&\;\le\;
\frac{\beta_{k+1}}{2M\,\beta_k\,\gamma_k}\frac{(1-\beta_k)^2}{(1-\beta_{k+1})}, 
\label{eq:cond1_gamma}
\end{align}
\begin{align}
    1 - 2M\gamma_k^2 - M\beta_k^2\gamma_k^2 &\;\ge\; 0,
\label{eq:cond2_gamma}
\end{align}
\begin{align}
\frac{\gamma_{k+1}}{\gamma_k} \le
\frac{\beta_{k+1}}{2\,\beta_k(1-\beta_{k+1})}
\left[
\frac{4M\gamma_k^2 + \beta_k^4 - 2\beta_k^3 + 3\beta_k^2 - 2}{M(\beta_k^2+2)\gamma_k^2 - 1}
\right],
\label{eq:cond3_gamma}
\end{align}
for $M := 2L^2(1+\theta)$ and $\theta \ge 0$. With $\widetilde c_{k+1}\ge0$ for $\theta=2$, these give:
$$V_k-V_{k+1}\ge \widetilde c_{k+1}L^2\|x_{k+1}-y_k\|^2.$$

Conditions of \cref{eq:cond1_gamma}, \cref{eq:cond2_gamma}, \cref{eq:cond3_gamma} are satisfied, with constant step size $\gamma_k = \gamma_* \in \Bigl(0, \tfrac{1}{3L}\sqrt{\tfrac{5}{26}}\Bigr)$, and the choice of $\beta_k = \frac{2}{k+6}$.

\end{lemma}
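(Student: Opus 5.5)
The plan mirrors the analysis of \cref{lemma:potential_eta}, now applied to \eqref{eq:goma_square}. The potential is \eqref{eq:potential} with $a_k=c_k=\frac{b_k\eta_k}{2\beta_k}$, $\eta_k=(1-\beta_k)\gamma_k$ and $b_{k+1}=b_k/(1-\beta_k)$. I would first rewrite the two updates in increment form using $u_k:=\beta_k x_0+(1-\beta_k)x_k$:
\[
x_{k+1}-x_k=\beta_k(x_0-x_k)-\eta_k G(y_k),\qquad x_{k+1}-y_k=\gamma_k G(y_{k-1})-\eta_k G(y_k).
\]
The second identity is the key coupling term, because the mismatch between $\gamma_k$ and $\eta_k$ now appears only through the factor $(1-\beta_k)$.

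Next I would expand $V_k-V_{k+1}$ into two pieces.
\begin{itemize}
\item \emph{Monotonicity piece.} Monotonicity gives $\langle G(x_{k+1})-G(x_k),x_{k+1}-x_k\rangle\ge0$, so I multiply this by $b_{k+1}/\beta_k$. Then I substitute the increment formula. The relation $b_{k+1}(1-\beta_k)=b_k$ cancels the anchored inner products $\langle G(x_k),x_k-x_0\rangle$ and $\langle G(x_{k+1}),x_{k+1}-x_0\rangle$ against the $b$-terms of $V$.
\item \emph{Lipschitz/Young piece.} I would control $\|G(x_{k+1})-G(y_k)\|^2\le L^2\|x_{k+1}-y_k\|^2$ and $\|G(x_k)-G(y_{k-1})\|^2\le L^2\|x_k-y_{k-1}\|^2$. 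I would then apply Young's inequality with parameter $\theta$, which produces the constant $M=2L^2(1+\theta)$.
\end{itemize}
After these steps, $V_k-V_{k+1}-\widetilde c_{k+1}L^2\|x_{k+1}-y_k\|^2$ becomes a quadratic form in the vectors $G(x_{k+1})$, $G(y_k)$, $G(x_k)$ and $x_k-y_{k-1}$, with coefficients depending on $\beta_k$, $\beta_{k+1}$, $\gamma_k$ and $\gamma_{k+1}$. Each condition then corresponds to one part of this form:
\begin{itemize}
\item \eqref{eq:cond1_gamma} makes the coefficient of $\|x_k-y_{k-1}\|^2$ nonnegative. This is a comparison of $c_k L^2$ with $M\gamma_k\gamma_{k+1}$ times the $b$-ratios, and it produces the factor $(1-\beta_k)^2/(1-\beta_{k+1})$.
\item \eqref{eq:cond2_gamma} makes the diagonal coefficient of the $G(y_k)$ block nonnegative.
\item \eqref{eq:cond3_gamma} is the Schur-complement (determinant) condition of the remaining $2\times2$ block coupling $G(x_{k+1})$ and $G(y_k)$. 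It leaves a residual $\widetilde c_{k+1}\ge0$ once $\theta=2$.
\end{itemize}

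The final step is to verify the conditions for $\gamma_k=\gamma_*$ and $\beta_k=2/(k+6)$, so that $\beta_k\le 1/3$ and $\beta_{k+1}/\beta_k=(k+6)/(k+7)\ge 6/7$. Taking $\theta=2$ gives $M=6L^2$.
\begin{itemize}
\item For \eqref{eq:cond2_gamma}, the worst case is $\beta_k=1/3$. It requires $6L^2\gamma_*^2(2+1/9)\le1$, i.e.\ $\gamma_*^2\le 3/(38L^2)$. This is implied by $\gamma_*^2<5/(234L^2)$.
\item For \eqref{eq:cond1_gamma}, I would reduce to a one-variable inequality in $k$ by bounding $(1-\beta_k)^2/(1-\beta_{k+1})$ from below and $\beta_{k+1}/\beta_k$ from below. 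With $M\gamma_*^2<5/39$, this should hold.
\end{itemize}

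The main obstacle I expect is \eqref{eq:cond3_gamma}. Its bracket is a rational function of $\beta_k$ and $M\gamma_*^2$ with a negative denominator $M(\beta_k^2+2)\gamma_*^2-1$, so sign bookkeeping matters. I would set $t=M\gamma_*^2$ and $\beta=\beta_k\in(0,1/3]$. I would then show that $(2-3\beta^2+2\beta^3-\beta^4-4t)/(1-(\beta^2+2)t)$ exceeds $2\beta_k(1-\beta_{k+1})/\beta_{k+1}=2(1-\beta_{k+1})(k+7)/(k+6)$. Clearing denominators gives a polynomial inequality in $\beta$ and $t$. I would check it at the endpoints $\beta\to0$ and $\beta=1/3$, and confirm monotonicity in between. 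I expect the threshold $t<5/39$, equivalently $\gamma_*<\frac{1}{3L}\sqrt{5/26}$, to be exactly where the $\beta\to0$ limit becomes tight, because $(2-4t)/(1-2t)\ge 2$ there.
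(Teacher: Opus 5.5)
Your architecture matches the paper's: monotonicity between $x_k$ and $x_{k+1}$ to telescope the anchored terms, Lipschitzness between $x_{k+1}$ and $y_k$ with a slack $\theta L^2\|x_{k+1}-y_k\|^2$, then a $2\times2$ PSD test. However, two of your concrete claims are wrong and would derail the execution.

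\textbf{First: the role of \eqref{eq:cond1_gamma}.} It is not the condition on the $\|x_k-y_{k-1}\|^2$ coefficient. In the paper one multiplies the Lipschitz inequality by $a_k/(2M\gamma_k^2)$, which has three effects:
\begin{itemize}
\item it cancels $a_k\|G(x_k)\|^2$ exactly;
\item the choice $a_k=b_k\eta_k/(2\beta_k)$ kills the $\langle G(x_{k+1})-G(x_k),G(y_k)\rangle$ term;
\item the $\|x_k-y_{k-1}\|^2$ coefficient becomes $L^2(c_k-a_k)$, which contains no step size (in particular no $\gamma_{k+1}$) and vanishes because $c_k=a_k$.
\end{itemize}
So $G(x_k)$ and $x_k-y_{k-1}$ drop out entirely. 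Condition \eqref{eq:cond1_gamma} is instead the diagonal entry $S^{11}_k=\frac{a_k}{2M\gamma_k^2}-a_{k+1}\ge0$ multiplying $\|G(x_{k+1})\|^2$. The factor $(1-\beta_k)^2/(1-\beta_{k+1})$ comes from $a_{k+1}/a_k$ with $\eta_k=(1-\beta_k)\gamma_k$.

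Seeing this also supplies the residual check you never carry out. With $c_{k+1}=a_{k+1}$, one has $\widetilde c_{k+1}=\frac{\theta a_k}{2M\gamma_k^2}-a_{k+1}\ge S^{11}_k\ge0$ whenever $\theta\ge1$.

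There are also two smaller slips:
\begin{itemize}
\item The monotonicity inequality must be multiplied by $b_k/\beta_k$, not $b_{k+1}/\beta_k$, for the anchored terms to match $b_k$ and $b_{k+1}$ in $V$.
\item $M=2L^2(1+\theta)$ does not come from a Young parameter. It comes from the split $(1-\beta_k)G(y_k)-G(y_{k-1})=[G(y_k)-G(y_{k-1})]-\beta_kG(y_k)$, followed by routing $G(y_k)-G(y_{k-1})$ through $G(x_k)$. This split is also where the $\beta_k^2$ term in \eqref{eq:cond2_gamma} comes from.
\end{itemize}

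\textbf{Second: where \eqref{eq:cond3_gamma} binds.} Your expectation is backwards. For $\beta=\beta_k=2/(k+6)$ one has $2\beta_k(1-\beta_{k+1})/\beta_{k+1}=2-\beta$. Clearing the positive denominator $1-(\beta^2+2)t$ turns the condition into $\beta\,[(1-3\beta+2\beta^2-\beta^3)-t(2-2\beta+\beta^2)]\ge0$.

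The overall factor $\beta$ is why both sides tend to $2$ as $\beta\to0$ for every $t$; indeed $(2-4t)/(1-2t)\equiv2$. So that endpoint constrains nothing beyond $t\le\phi(0)=1/2$, where $\phi(\beta):=\frac{1-3\beta+2\beta^2-\beta^3}{2-2\beta+\beta^2}$.

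The actual condition is $t\le\phi(\beta_k)$. Since $\phi$ is decreasing on $[0,1/3]$, the binding case is $k=0$, $\beta_0=1/3$. This gives $\phi(1/3)=5/39$, i.e.\ $\gamma_*^2\le5/(234L^2)$.

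With these corrections, your checks of \eqref{eq:cond1_gamma} (binding value $2M\gamma_*^2\le8/15$ at $k=0$) and \eqref{eq:cond2_gamma} go through.
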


\begin{theorem}\label{thm:main_gamma} {\normalfont [Proof in \cref{appx:main_gamma}.]}
Suppose $G$ is monotone and $L$-Lipschitz, and let $x^\star$ satisfy $G(x^\star)=0$. Consider the update of \eqref{eq:goma_square}. If the step size $\gamma$ and anchoring coefficient $\beta_k$ satisfy the conditions of \cref{lemma:potential_gamma}, then the potential function of \cref{eq:potential} is decreasing. This implies the bound
\begin{equation}\label{eq:simple_bound_gamma}
\|G(x_k)\|^2 
\;\le\; \frac{16/\gamma_*^2 + 32L^2}{(k+4)^2}\,\|x_0 - x^\star\|^2 .
\end{equation}
Moreover, the constant $16/\gamma_*^2+32L^2$ is decreasing in $\gamma_*$, so it is smallest at the largest admissible step; as $\gamma_*\to\tfrac{1}{3L}\sqrt{\tfrac{5}{26}}$ it gives
\begin{equation}\label{eq:edge_bound_2}
\|G(x_k)\|^2 
\;\le\; \frac{780.8\,L^2}{(k+4)^2}\,\|x_0 - x^\star\|^2 .
\end{equation}
\end{theorem}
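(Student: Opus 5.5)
Since \cref{lemma:potential_gamma} already gives $V_{k+1}\le V_k$, the bound follows by telescoping $V_k\le V_0$ and then converting the potential into a residual bound. We fix $\beta_k=\frac{2}{k+6}$, so that $1-\beta_k=\frac{k+4}{k+6}$. The recursion $b_{k+1}=b_k/(1-\beta_k)$ gives $b_k=\frac{b_0}{20}(k+4)(k+5)$. For \eqref{eq:goma_square}, the coefficient of $\|G(x_k)\|^2$ is $a_k=\frac{b_k\eta_{k-1}}{2\beta_k}$ with $\eta_k=(1-\beta_k)\gamma_*$, so $a_k\asymp \gamma_*(k+4)^2(k+5)(k+6)\cdot$const. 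We will pin down the exact constants, but only the orders $a_k\sim\gamma_* k^3 b_0$ and $b_k\sim k^2 b_0$ matter.

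\emph{Lower bound on $V_k$.} Monotonicity together with $G(x^\star)=0$ gives $\langle G(x_k),x_k-x^\star\rangle\ge0$. Hence
\[
\langle G(x_k),x_k-x_0\rangle\ge \langle G(x_k),x^\star-x_0\rangle\ge -\|G(x_k)\|\,\|x_0-x^\star\|.
\]
Dropping the nonnegative term $c_kL^2\|x_k-y_{k-1}\|^2$ yields
\[
V_k\ge a_k\|G(x_k)\|^2-b_k\|G(x_k)\|\|x_0-x^\star\|.
\]
We apply Young's inequality, $b_k\|G\|\|x_0-x^\star\|\le \frac{a_k}{2}\|G\|^2+\frac{b_k^2}{2a_k}\|x_0-x^\star\|^2$, to get
\[
\tfrac{a_k}{2}\|G(x_k)\|^2\le V_0+\tfrac{b_k^2}{2a_k}\|x_0-x^\star\|^2 .
\]

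\emph{Upper bound on $V_0$.} We initialize with $y_{-1}=x_0$, so that $V_0=a_0\|G(x_0)\|^2\le a_0L^2\|x_0-x^\star\|^2$ by Lipschitz continuity. If instead $y_{-1}$ is arbitrary, the $c_0L^2\|x_0-y_{-1}\|^2$ term contributes an extra $L^2$-type constant. Dividing through by $a_k/2$ then gives
\[
\|G(x_k)\|^2\le\Bigl(\frac{2a_0L^2}{a_k}+\frac{b_k^2}{a_k^2}\Bigr)\|x_0-x^\star\|^2 .
\]
Since $b_k/a_k=2\beta_k/\eta_{k-1}=\frac{4}{(k+6)(1-\beta_{k-1})\gamma_*}$, the second term is bounded by $\frac{16}{\gamma_*^2(k+4)^2}$, using $(k+6)(1-\beta_{k-1})=(k+6)\frac{k+3}{k+5}\ge k+4$ up to constant adjustments. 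The first term is $O(L^2/k^3)$, which we bound crudely by $\frac{32L^2}{(k+4)^2}$ after computing the constant $a_0/a_k$ explicitly. This reproduces \eqref{eq:simple_bound_gamma}.

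\emph{The main obstacle} is the exact constant bookkeeping: checking that $(k+6)(1-\beta_{k-1})$ dominates $k+4$, and that $2a_0L^2/a_k\le 32L^2/(k+4)^2$ for all $k\ge0$. The latter reduces to a polynomial inequality in $k$ that is tightest at $k=0$. If $k=0$ is too tight, we will use a slightly weaker Young split, for example the constants $\tfrac{3}{4}$ and $\tfrac14$, and rebalance. Finally, the map $\gamma_*\mapsto16/\gamma_*^2+32L^2$ is decreasing. Substituting $\gamma_*^2=\frac{5}{234L^2}$ gives $16\cdot\frac{234}{5}L^2+32L^2=748.8L^2+32L^2=780.8L^2$, which is \eqref{eq:edge_bound_2}.
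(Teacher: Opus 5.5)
Your overall route is the paper's. You telescope $V_k\le V_0$ and lower-bound $V_k$ via monotonicity at $x^\star$ plus Young's inequality with the $\tfrac{a_k}{2}$ split. You use $y_{-1}=x_0$ so that $V_0=a_0\|G(x_0)\|^2\le a_0L^2\|x_0-x^\star\|^2$, and divide by $a_k/2$. Your final arithmetic $748.8L^2+32L^2=780.8L^2$ is also right.

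The problem is the coefficient you feed into this. The potential \eqref{eq:potential} certified by \cref{lemma:potential_gamma} has $a_k=c_k=\frac{b_k\eta_k}{2\beta_k}$ with the \emph{same} index on $\eta$; this is exactly what $S_k^{23}=0$ forces. It is not $\frac{b_k\eta_{k-1}}{2\beta_k}$. With your index shift, $(k+6)(1-\beta_{k-1})=\frac{(k+6)(k+3)}{k+5}=k+4-\frac{2}{k+5}<k+4$ for every $k\ge0$. So the inequality you need is false, and your second term exceeds $\frac{16}{\gamma_*^2(k+4)^2}$ by the factor $\bigl(\frac{(k+4)(k+5)}{(k+3)(k+6)}\bigr)^2>1$.

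Your proposed rescue cannot work. A Young split $\epsilon a_k\|G\|^2+\frac{b_k^2}{4\epsilon a_k}\|x_0-x^\star\|^2$ puts the coefficient $\frac{1}{4\epsilon(1-\epsilon)}\ge1$ on $b_k^2/a_k^2$, and this is minimized at $\epsilon=\tfrac12$. The slack in the $L^2$-term also cannot absorb an excess proportional to $1/\gamma_*^2$, which is unbounded as $\gamma_*\to0$. Moreover, your modified $V_k$ is not the potential that \cref{lemma:potential_gamma} proves nonincreasing. Your order statement is also inconsistent: $(k+4)^2(k+5)(k+6)$ is quartic, not cubic.

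With the correct coefficient, everything is exact and your ``main obstacle'' disappears. You get $a_k=\frac{b_0\gamma_*}{80}(k+4)^2(k+5)$ and $a_0=b_0\gamma_*$. Hence $\frac{b_k}{a_k}=\frac{2\beta_k}{(1-\beta_k)\gamma_*}=\frac{4}{(k+4)\gamma_*}$ identically, which gives the $\frac{16}{\gamma_*^2(k+4)^2}$ term with equality. Meanwhile $\frac{2a_0L^2}{a_k}=\frac{160L^2}{(k+4)^2(k+5)}\le\frac{32L^2}{(k+4)^2}$ because $k+5\ge5$. This is precisely the paper's computation.
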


\textbf{Summary.} In the deterministic monotone Lipschitz setting, the optimal $\mathcal{O}(1/k^2)$ last-iterate rate is already attained by several accelerated methods, including EAG \citep{pmlr-v139-yoon21d}, FEG \citep{lee2021fast}, and anchored Popov \citep{TranDinh2021HalpernTypeAA}. GOMA matches this optimal rate under both schedules (fixed $\eta_*$ and fixed $\gamma_*$). Beyond matching the rate, our analysis yields a pseudo fixed-step size scheme: hyperparameter tuning reduces to adjusting only $\eta_*$ or $\gamma_*$, with the two-time-scale structure maintained via the $(1-\beta_k)$ factor. This is simpler than the changing-step size argument required by anchored Popov \citep{TranDinh2021HalpernTypeAA} for a similar method. The main novelty of our work lies in the stochastic setting (Section~\ref{sec:stochastic}).

\section{Variant of GOMA for Stochastic Settings} \label{sec:stochastic}

\subsection{Background}

While first-order methods such as extragradient and optimistic gradient enjoy strong guarantees for deterministic variational inequalities, their behavior can fundamentally change in stochastic settings. In particular, under unbiased stochastic oracles, both may fail to converge, even for simple monotone problems \citep{NEURIPS2020_ba9a56ce}. 

A line of research \citep{doi:10.1137/070704277, 10.1214/10-SSY011, gorbunov2022clipped,beznosikov2023smooth, sadiev2023high, gorbunov2024high} establishes convergence guarantees for \textit{ergodic} (time-averaged) iterates of stochastic VI algorithms. However, ergodic convergence does not imply convergence of the actual iterates, which may cycle indefinitely despitefinite regret \citep{pmlr-v125-bailey20a}. 

This limitation has motivated approaches that modify the algorithmic dynamics to recover \textit{last-iterate} convergence in stochastic settings. \citet{NEURIPS2020_ba9a56ce} showed that introducing a two-time-scale scheme by using a larger step size for the extrapolation step restores almost sure \textit{last-iterate} convergence of stochastic extragradient for affine problems.

Beyond the affine case, last-iterate guarantees for general monotone Lipschitz VIs rely on one of two mechanisms, each with its own cost. The first is growing batch sizes \citep{lee2021fast}, which suppress the noise by drawing more samples each iteration. As a result, the per-iteration cost grows without bound and breaks the one-sample-per-round online model; in finite-sum problems it eventually reverts to full-batch gradients. By contrast, variance reduction \citep{pmlr-v178-alacaoglu22a, cai2022stochastic,chen2024near} keeps the batch fixed but recomputes periodic large-batch snapshots and stores a reference gradient within multi-phase schedules. Moreover, its empirical advantage over plain stochastic methods is reportedly limited for deep networks \citep{ NEURIPS2019_84d2004b}. We therefore ask whether last-iterate convergence is attainable with a single stochastic sample per iteration and constant, non-vanishing noise.


\subsection{Method}
We consider a simplified variant of \eqref{eq:goma} in which $\gamma_k = 0$,
leading to the following single–query update:
\begin{equation}\tag{$\diamond$}\label{eq:goma_gamma0}
\begin{aligned}
y_k &= \beta_k x_0 + (1-\beta_k)x_k,\\
x_{k+1} &= y_k - \eta_k G(y_k).
\end{aligned}
\end{equation}
This method evaluates the operator at an \emph{anchored interpolation}
between the current iterate and the initial point, and then applies a
single forward step at this interpolated point. 

Unlike optimistic or extragradient-type methods, this variation of GOMA does not reuse past gradients, which removes a key source of instability under stochastic noise. The update can be interpreted as an extreme form of time-scale separation, where extrapolation is replaced by anchoring to a fixed reference point, yielding a stable single-query method in regimes where classical approaches fail.
In the deterministic regime this simplification is slower: \eqref{eq:goma_gamma0} attains only an $\mathcal{O}(1/k)$ last-iterate rate on $\|G(x_k)\|^2$ (Theorem~\ref{thm:goma_gamma0_det}), rather than the accelerated $\mathcal{O}(1/k^2)$ of the general GOMA (Section~\ref{sec:deterministic}). We nonetheless adopt it because this structure is what enables stochastic last-iterate convergence.

Equation \eqref{eq:goma_gamma0} can also be written in the equivalent form:
\begin{align}
x_{k+1}
\!=\! x_k \!+\! \beta_k(x_0 \!-\! x_k)
\!-\! \eta_k G\!\left(x_k\! + \!\beta_k(x_0 \!-\! x_k)\right).
\label{eq:goma_momentum}
\end{align}
This formulation closely resembles Nesterov’s momentum method
\cite{nesterov1983method}:
\begin{align}\label{eq:nesterov}
x_{k+1}
\!=\! x_k\! +\! \beta_k (x_k\! -\! x_{k-1})\!
-\! \eta_k G\!\left(x_k\! +\! \beta_k (x_k\! -\! x_{k-1})\right).
\end{align}
The key difference is that \eqref{eq:goma_momentum} anchors the extrapolation
to the fixed point $x_0$, whereas Nesterov’s method \eqref{eq:nesterov}
anchors it to the previous iterate $x_{k-1}$. Moreover, the anchoring
directions are opposite: replacing $x_0$ by $x_{k-1}$ in
\eqref{eq:goma_gamma0} yields an update that matches Nesterov’s momentum
only when the momentum coefficient is negative.

An earlier work \cite{gidel2019negative} showed the effectiveness of
Polyak’s heavy-ball method \cite{polyak1964some} with negative momentum for
game dynamics. We will further compare this approach with GOMA empirically,
demonstrating the effectiveness of GOMA in stochastic settings.

We now turn to the convergence analysis of the stochastic variant
\eqref{eq:goma_gamma0}. In what follows, we state the assumptions under
which last-iterate convergence can be established, and then present the
corresponding rate guarantees.

\begin{assumption}
\label{ass:bounded_v}
We assume that $\widehat{G}(x,\xi)$ is an unbiased stochastic oracle for $G(x)$, i.e.,
\[
\mathbb{E}_{\xi}[\widehat{G}(x,\xi)\mid x] = G(x),
\]
and that, for some $\sigma \ge 0, \kappa >0$, the noise satisfies the second moment conditional bound
\[
\mathbb{E}_{\xi}\!\left[\|\widehat{G}(x,\xi)\|^2 \mid x\right]
\;\le\; \sigma^2 + \kappa \|G(x)\|^2.
\]
\end{assumption}
This assumption is similar to~\citep[Assump.~2]{NEURIPS2019_4625d8e3} and holds under mild conditions. 
We take $\kappa \geq 1$, without loss of generality.\footnote{Indeed, by unbiasedness and the decomposition $\mathbb{E}_\xi[\|\widehat G(x,\xi)\|^2 \mid x] = \|G(x)\|^2 + \mathbb{E}_\xi[\|\widehat G(x,\xi)-G(x)\|^2 \mid x]$, Assumption~\ref{ass:bounded_v} implies that $0\le \mathbb{E}_{\xi}\left[\|\widehat{G}(x,\xi) - G(x)\|^2 \mid x\right] \le \sigma^2 + (\kappa-1) \|G(x)\|^2$. If $\kappa < 1$, it follows that $\|G(x)\|^2 \leq \frac{\sigma^2}{1-\kappa}$ for all $x \in \R^d$, i.e., $G(x)$ is bounded. Furthermore, if Assumption~\ref{ass:bounded_v} holds for some pair of constants $(\sigma,\kappa_0)$, then it also holds for every $\kappa \ge \kappa_0$ with the same $\sigma$. Hence, any admissible $\kappa_0<1$ can be replaced by $\kappa=1$.} Crucially, our analysis covers any $\kappa\ge1$, i.e. state-dependent noise whose second moment grows with $\|G\|^2$; prior single-call stochastic VI guarantees (E-Halpern, RAIN++) and FEG require bounded variance ($\kappa=1$).

Under Assumption~\ref{ass:bounded_v}, we analyze~\eqref{eq:goma_gamma0} with a stochastic oracle $\widehat G$ in place of $G$:
\begin{equation}
     \label{eq:sto_updates}
     \begin{aligned}
y_k=\beta_k x_0+(1-\beta_k)x_k,\\ 
x_{k+1}=y_k- \eta_k\widehat G(y_k,\xi_{k}). \end{aligned}
\end{equation}
\paragraph{Proof strategy.}
Our analysis separates the deterministic and stochastic components of
the dynamics. We compare the noisy iterates to a \emph{deterministic
reference trajectory}, obtained by running the same
method~\eqref{eq:goma_gamma0} with the exact operator and the same
schedules:
\begin{equation}\label{eq:reference_traj}
\begin{aligned}
\bar x_0 &= x_0, \qquad
\bar y_k = \beta_k x_0 + (1-\beta_k)\,\bar x_k,\\
\bar x_{k+1} &= \bar y_k - \eta_k\, G(\bar y_k).
\end{aligned}
\end{equation}
By $L$-Lipschitzness,
$\|G(x_N)\|^2 \le 2\|G(\bar x_N)\|^2 + 2L^2\|x_N-\bar x_N\|^2$, so it
suffices to bound the residual along the reference trajectory and the
mean-square deviation of the stochastic iterates from it. The first
ingredient is purely deterministic: with the more conservative,
$\kappa$-dependent step size required by the stochastic setting, the
noiseless method retains an $\mathcal{O}(1/\sqrt{k})$ last-iterate guarantee,
both at the iterates $\bar x_k$ and at the query points $\bar y_k$.
Its proof is a potential-based analysis along the reference trajectory,
analogous to the deterministic case.

\begin{table*}[!h]
\centering
\caption{Last-iterate convergence guarantees on monotone Lipschitz VIs. Rates are last-iterate, reported on $\mathbb{E}\|G(x_k)\|$, i.e. the square root of the rates in Theorem~\ref{thm:last_iter_stoch_rho0}, unless explicitly marked ``(gap)'', in which case they are on $\mathbb{E}[\mathrm{GAP}]$ (not directly comparable to $\mathbb{E}\|G\|$ rates); ``(affine)'' marks a guarantee that holds only for affine operators. ``Unbounded domain'' indicates the method's analysis applies on $\mathbb{R}^d$; ``Single-call'' = one operator evaluation per iteration. \textbf{GOMA is the only method that is single-call, free of variance reduction and growing batches, and convergent under unbounded noise.}}
\label{tab:comparison}
\small
\resizebox{\textwidth}{!}{%
\begin{tabular}{l c c c c c c c}
\toprule
Method & \makecell{Unbounded \\ domain} & Single-call & \makecell{Deterministic  \\ rate} & \makecell{Stochastic \\ rate} & \makecell{No variance \\ reduction} & \makecell{No growing \\ batch} & \makecell{Unbounded \\ noise ($\kappa>1$)} \\
\midrule
DSEG \citep{NEURIPS2020_ba9a56ce}           & \checkmark & \texttimes & $\mathcal{O}  (k^{-1/2})$   & $\mathcal{O}(k^{-1/2})$ (affine)                  & \checkmark & \checkmark & \texttimes \\
FEG \citep{lee2021fast}                       & \checkmark & \texttimes & $\mathcal{O}  (k^{-1})$         & $\mathcal{O}(k^{-1})$                  & \checkmark & \texttimes & \texttimes \\
E-Halpern \citep{cai2022stochastic}           & \checkmark & \checkmark & $\mathcal{O}(k^{-1})$         & $\mathcal{O}(k^{-1/3})$                & \texttimes & \checkmark & \texttimes \\
RAIN$^{++}$ \citep{chen2024near}         & \checkmark & \texttimes & $\mathcal{O}(k^{-1})$         & $\tilde{\mathcal{O}}(k^{-1/2})$         & \texttimes & \checkmark & \texttimes \\
GABP \citep{abe2025boosting}                 & \texttimes & \checkmark & $\tilde{\mathcal{O}}(k^{-1})$ (gap) & $\tilde{\mathcal{O}}(k^{-1/7})$ (gap) & \checkmark & \checkmark & \texttimes \\
\midrule
\textbf{GOMA simplified} ($\gamma_k=0$)
  & \checkmark & \checkmark
  & \makecell[c]{$\mathcal{O}(k^{-1/2})$\\ {\scriptsize Thm~\ref{thm:goma_gamma0_det}}}
  & \makecell[c]{$\mathcal{O}(k^{-1/4})$\\ {\scriptsize Thm~\ref{thm:last_iter_stoch_rho0}}}
  & \checkmark & \checkmark & \checkmark \\
\bottomrule
\end{tabular}%
}
\end{table*}

\begin{lemma}[Deterministic reference bound]\label{lem:det-reference}
{\normalfont [Proof in \cref{appx:lem_det_reference}.]}
Let $G$ be monotone and $L$-Lipschitz with $G(x^\star)=0$, and let
$(\bar x_k,\bar y_k)$ be given by~\eqref{eq:reference_traj} with
$\beta_k=\frac{1}{k+2}$, $\eta_k=\frac{1}{L\sqrt{\kappa}(k+2)^{3/4}}$,
and $\kappa\ge1$. Then for all $N\ge1$ and all $k\ge0$,
\begin{align}
\|G(\bar x_N)\|^2
& \le 
\frac{33\,L^2\kappa\,\|x_0-x^\star\|^2}{\sqrt{N+1}},
\label{eq:lemA_xbound}\\
\|G(\bar y_k)\|^2
&\le \frac{94\,L^2\kappa\,\|x_0-x^\star\|^2}{\sqrt{k+1}}.
\label{eq:lemA_ybound}
\end{align}
\end{lemma}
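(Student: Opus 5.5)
The plan is a potential-function argument along the reference trajectory~\eqref{eq:reference_traj}, modeled on \cref{lemma:potential_eta} but without the extrapolation term. Because $\gamma_k=0$, the $\|x_k-y_{k-1}\|^2$ term of~\eqref{eq:potential} is dropped. I would use
\[
V_k = a_k\|G(\bar x_k)\|^2 + b_k\langle G(\bar x_k),\bar x_k-x_0\rangle ,
\qquad b_{k+1}=\tfrac{b_k}{1-\beta_k},\quad a_k\approx \tfrac{b_k\eta_k}{2\beta_k}.
\]
With $\beta_k=\frac1{k+2}$ we have $b_k\propto k+1$. With $\eta_k=\frac{1}{L\sqrt\kappa (k+2)^{3/4}}$ this gives $a_k\asymp (k+2)^{5/4}/(L\sqrt\kappa)$.

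\textbf{Step 1 (one-step decrease).} I will show $V_{k+1}\le V_k$. Use the identities $\bar x_{k+1}-\bar y_k=-\eta_kG(\bar y_k)$ and $\bar y_k-x_0=(1-\beta_k)(\bar x_k-x_0)$. Apply monotonicity between $\bar x_{k+1}$ and $\bar x_k$, and between $\bar x_{k+1}$ and $\bar y_k$. Bound $\|G(\bar x_{k+1})-G(\bar y_k)\|\le L\eta_k\|G(\bar y_k)\|$ by Lipschitzness. Then $V_k-V_{k+1}$ becomes a quadratic form in $(G(\bar x_k),G(\bar y_k),G(\bar x_{k+1}))$, and it suffices to check that this form is nonnegative.

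The conditions this produces are analogues of~\eqref{eq:cond1}--\eqref{eq:cond3}:
\begin{itemize}
\item a bound $L^2\eta_k^2\le c<1$, which holds comfortably since $\kappa\ge1$;
\item a compatibility condition $a_{k+1}\le \frac{b_{k+1}\eta_k}{2\beta_k}(1-O(L^2\eta_k^2))$ linking consecutive coefficients.
\end{itemize}
This compatibility condition is the main obstacle. Because $\eta_k$ now decreases, the exact choice $a_k=b_k\eta_k/(2\beta_k)$ need not telescope. I would first try $a_k=\frac{b_k\eta_k}{2\beta_k}\cdot\frac12$, i.e.\ shrink by a constant, so that the slack absorbs the ratio $\eta_{k+1}(k+3)/(\eta_k(k+2))=1+O(1/k)$. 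I would then verify the inequality using $(1+1/(k+2))^{1/4}\le 1+\frac{1}{4(k+2)}$. If a constant factor is not enough, I will keep a residual negative term $-\epsilon_k\|G(\bar x_{k+1})\|^2$ in the decrease and sum it.

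\textbf{Step 2 (from potential to residual).} Monotonicity and $G(x^\star)=0$ give
\[
\langle G(\bar x_N),\bar x_N-x_0\rangle\ge\langle G(\bar x_N),x^\star-x_0\rangle\ge-\|G(\bar x_N)\|\,\|x_0-x^\star\|.
\]
Hence $a_N\|G(\bar x_N)\|^2-b_N\|G(\bar x_N)\|D\le V_0$, where $D=\|x_0-x^\star\|$. Solving this quadratic inequality gives
\[
\|G(\bar x_N)\|^2\le 2\Bigl(\tfrac{b_N}{a_N}\Bigr)^2D^2+\tfrac{2V_0}{a_N}.
\]
Here $(b_N/a_N)^2\asymp L^2\kappa/\sqrt{N+1}$. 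Also $V_0=a_0\|G(x_0)\|^2\le a_0L^2D^2$, so $V_0/a_N\lesssim L^2D^2 (N+2)^{-5/4}$, which is of lower order. Tracking constants should give~\eqref{eq:lemA_xbound}.

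\textbf{Step 3 (query points).} By Lipschitzness,
\[
\|G(\bar y_k)\|\le\|G(\bar x_k)\|+L\beta_k\|\bar x_k-x_0\|.
\]
It remains to bound $\|\bar x_k-x^\star\|$ uniformly. The anchoring step is a convex combination, so it does not increase the distance to $x^\star$. For the forward step, monotonicity gives
\[
\|\bar y-\eta G(\bar y)-x^\star\|^2\le(1+\eta^2L^2)\|\bar y-x^\star\|^2.
\]
Since $\sum_k\eta_k^2L^2\le\sum_k(k+2)^{-3/2}<\infty$, this yields $\|\bar x_k-x^\star\|\le C D$ for an absolute constant $C$, and hence $\|\bar x_k-x_0\|\le(1+C)D$. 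The anchoring contribution is then $O(L^2D^2/(k+2)^2)$, dominated by $1/\sqrt{k+1}$. Combining with Step 2 via $(u+v)^2\le2u^2+2v^2$ gives~\eqref{eq:lemA_ybound}. The case $k=0$ is handled directly from $\|G(x_0)\|\le LD$.
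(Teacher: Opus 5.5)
\textbf{There is a genuine gap in Step 1.} Your Steps 2 and 3 match the paper's Steps 2--5: Young's inequality against $\langle G(\bar x_N),x^\star-x_0\rangle$, then uniform boundedness of $\|\bar x_k-x^\star\|$ via the factors $(1+L^2\eta_k^2)^{1/2}$ and the convex-combination anchor. The problem is the one-step decrease, which carries the whole lemma.

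You name an upper bound on $a_{k+1}$ as the only coupling condition, and you propose to gain slack with $a_k=\tfrac12\cdot\tfrac{b_k\eta_k}{2\beta_k}$. The binding constraint is actually the opposite one. It is the determinant condition of the form in $(G(\bar x_k),G(\bar y_k))$, which needs $a_k$ to be large relative to $a_{k+1}$, and it leaves no constant-factor slack. Write $m_k:=\tfrac{b_k\eta_k}{2\beta_k(1-\beta_k)}=\tfrac{b_{k+1}\eta_k}{2\beta_k}$. Monotonicity between $\bar x_{k+1}$ and $\bar x_k$ gives
\[
H_k-H_{k+1}\ge a_k\|G(\bar x_k)\|^2-a_{k+1}\|G(\bar x_{k+1})\|^2+2m_k\langle G(\bar x_{k+1}),G(\bar y_k)\rangle-2m_k(1-\beta_k)\langle G(\bar x_k),G(\bar y_k)\rangle .
\]
Take $G(\bar x_{k+1})=G(\bar y_k)=g$. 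This is compatible with the Lipschitz and monotonicity relations you plan to use between $\bar x_{k+1}$ and $\bar y_k$. Minimizing over $G(\bar x_k)$ leaves $\bigl(2m_k-a_{k+1}-m_k^2(1-\beta_k)^2/a_k\bigr)\|g\|^2$.

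With your choice, $a_k=\tfrac12 m_k(1-\beta_k)$ and $a_{k+1}=\tfrac12 m_k(1+\beta_k)^{1/4}$. The value then equals $m_k\bigl(2\beta_k-\tfrac12(1+\beta_k)^{1/4}\bigr)\|g\|^2$, which is negative for every $k\ge2$. So no nonnegative combination of your inequalities can certify $H_{k+1}\le H_k$.

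More generally, multiplying the correct coefficients by a constant $c$ gives the necessary condition $c(2-c)\ge\bigl(\tfrac{k+1}{k+2}\bigr)^{3/4}$. Since $c(2-c)=1-(1-c)^2$, this fails for large $k$ unless $c=1$. Your fallback of keeping a negative residual and summing it does not help either. The per-step deficit is of order $m_k\|G(\bar y_k)\|^2\asymp a_{k+1}\|G(\bar y_k)\|^2$, the size of the potential itself, so after dividing by $a_N$ the accumulated error does not decay.

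\textbf{The missing idea is to fix $a$ exactly, with a shifted index.} Set $a_{k+1}:=m_k$, built from $\eta_k$ and not $\eta_{k+1}$. With $b_k=k+1$ this gives $a_k=\tfrac{(k+1)^{5/4}}{2L\sqrt\kappa}$ for $k\ge1$. Now add exactly $a_{k+1}$ times the Lipschitz inequality $\|G(\bar x_{k+1})\|^2-2\langle G(\bar x_{k+1}),G(\bar y_k)\rangle+(1-L^2\eta_k^2)\|G(\bar y_k)\|^2\le0$. Both terms involving $G(\bar x_{k+1})$ cancel, leaving a $2\times2$ form whose only condition is $a_k(1-L^2\eta_k^2)\ge a_{k+1}(1-\beta_k)^2$, i.e. $1-L^2\eta_k^2\ge\bigl(\tfrac{k+1}{k+2}\bigr)^{3/4}$.

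The margin here is only $\Theta(1/k)$: by concavity, $(1-\tfrac1m)^{3/4}\le 1-\tfrac{3}{4m}$ with $m=k+2$. The bound $L^2\eta_k^2\le m^{-3/2}\le\tfrac{3}{4m}$ fits inside it for all $m\ge2$. This recursion leaves $a_0$ undefined, so telescope from $k=1$. Bound $H_1\le 6a_1L^2\|x_0-x^\star\|^2$ directly, using $\bar x_1=x_0-\eta_0G(x_0)$ and $b_1\eta_0=a_1$. Your Steps 2 and 3 then complete the argument essentially as in the paper.
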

 
The bound~\eqref{eq:lemA_ybound} at the query points is the
technically important addition: under
Assumption~\ref{ass:bounded_v} the second moment of the oracle grows
with $\|G(y_k)\|^2$, so controlling the noise injected at step $k$
requires a residual bound along the \emph{entire} trajectory, not only
at the final iterate. The second ingredient shows that the noisy
iterates track the reference trajectory.
 
\begin{lemma}[Stochastic stability]\label{lem:stoch-stability}
{\normalfont [Proof in \cref{appx:lem_stoch_stability}.]}
In the setting of Lemma~\ref{lem:det-reference}, let $\widehat G$
satisfy Assumption~\ref{ass:bounded_v}, let $(x_k)$ be given
by~\eqref{eq:sto_updates}, and set $e_k:=x_k-\bar x_k$. Then for all $N\ge0$,
\begin{equation}\label{eq:lemB_precise}
\mathbb{E}\,\|e_N\|^2
\;\le\; 
\frac{1}{\sqrt{N+1}}
\left(\frac{4\,\sigma^2}{L^2\kappa}
+ 752\,\kappa\,\|x_0-x^\star\|^2\right).
\end{equation}
\end{lemma}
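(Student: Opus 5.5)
We will derive a one-step recursion for the deviation $e_k=x_k-\bar x_k$ and then unroll it. Subtracting~\eqref{eq:reference_traj} from~\eqref{eq:sto_updates} gives $y_k-\bar y_k=(1-\beta_k)e_k$. Introduce the martingale-difference noise $\zeta_k:=\widehat G(y_k,\xi_k)-G(y_k)$. Then
\begin{equation*}
e_{k+1}=(1-\beta_k)e_k-\eta_k\bigl(G(y_k)-G(\bar y_k)\bigr)-\eta_k\zeta_k .
\end{equation*}
Conditionally on $\mathcal F_k$, which contains $y_k$, the noise $\zeta_k$ has mean zero, so the cross term with it vanishes in expectation. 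Expanding the square and using monotonicity, $\langle G(y_k)-G(\bar y_k),\,y_k-\bar y_k\rangle\ge0$, removes the remaining cross term. Lipschitzness bounds the quadratic term. Together these give
\begin{equation*}
\mathbb E\|e_{k+1}\|^2\le (1-\beta_k)^2\bigl(1+\eta_k^2L^2\bigr)\mathbb E\|e_k\|^2+\eta_k^2\,\mathbb E\|\zeta_k\|^2 .
\end{equation*}

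Next we bound the noise using Assumption~\ref{ass:bounded_v}: $\mathbb E\|\zeta_k\|^2\le\sigma^2+(\kappa-1)\mathbb E\|G(y_k)\|^2$. We compare to the reference point via $\|G(y_k)\|^2\le 2\|G(\bar y_k)\|^2+2L^2(1-\beta_k)^2\|e_k\|^2$. The first piece is controlled by~\eqref{eq:lemA_ybound} of Lemma~\ref{lem:det-reference}, which contributes at most $2\cdot 94L^2\kappa\|x_0-x^\star\|^2/\sqrt{k+1}$. The second piece folds back into the contraction factor, which becomes $(1-\beta_k)^2(1+(2\kappa-1)\eta_k^2L^2)$. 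With $\eta_k^2L^2\kappa=(k+2)^{-3/2}$, this factor is at most $(1-\beta_k)^2\bigl(1+2(k+2)^{-3/2}\bigr)$. For $\beta_k=1/(k+2)$ and $k\ge0$ this is at most $1-\beta_k$, since $(k+1)(1+2(k+2)^{-3/2})\le k+2$. The recursion becomes
\begin{equation*}
u_{k+1}\le \frac{k+1}{k+2}\,u_k+\frac{\sigma^2}{L^2\kappa(k+2)^{3/2}}+\frac{C\|x_0-x^\star\|^2}{(k+2)^{3/2}\sqrt{k+1}},
\end{equation*}
with $u_k=\mathbb E\|e_k\|^2$, $u_0=0$ and $C\approx 188$.

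We then unroll the recursion. Multiplying by $k+2$ gives $(k+2)u_{k+1}\le (k+1)u_k+\frac{\sigma^2}{L^2\kappa\sqrt{k+2}}+\frac{C\|x_0-x^\star\|^2}{\sqrt{(k+2)(k+1)}}$. Telescoping from $u_0=0$ yields
\begin{equation*}
(N+1)u_N\le \frac{\sigma^2}{L^2\kappa}\sum_{j=2}^{N+1}j^{-1/2}+C\|x_0-x^\star\|^2\sum_{j=1}^{N} \frac{1}{\sqrt{j(j+1)}} .
\end{equation*}
The first sum is at most $2\sqrt{N+1}$, which after dividing by $N+1$ gives a bound of order $\sqrt{N+1}$ in the numerator, i.e.\ the $\sigma^2$ term of order $1/\sqrt{N+1}$. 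The second sum, however, is only $O(\log N)$, giving a $\log(N)/N$ rate, which is itself bounded by $O(1/\sqrt{N+1})$. Absorbing $\log$ into $\sqrt{\cdot}$ with a loose constant (and using $\kappa\ge1$ to put a $\kappa$ in front) gives the form~\eqref{eq:lemB_precise}. The large constant $752=4\cdot 188$ suggests the authors used a cruder $\sqrt{}$-sum bound rather than the log.

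The main obstacle is the contraction check and the constant bookkeeping. The key requirement is that the extra factor $(1+2\kappa\eta_k^2L^2)$ produced by the state-dependent noise does not destroy the $(1-\beta_k)$ contraction. This is precisely why $\eta_k$ carries the $1/\sqrt\kappa$ factor and decays faster than $\beta_k^{1/2}$. If the inequality $(1-\beta_k)^2(1+2(k+2)^{-3/2})\le 1-\beta_k$ fails at small $k$, we would instead settle for a factor $(1-\beta_k/2)$ and adjust the telescoping weights, say to $(k+2)^{1/2}$ powers. This still gives a $1/\sqrt{N+1}$ rate, with a larger constant.
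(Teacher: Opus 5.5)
Your proposal is correct. Its skeleton is the paper's: the error recursion, monotonicity to drop the drift cross term, the Lipschitz split of $G(y_k)$ around $\bar y_k$, and \eqref{eq:lemA_ybound} to control the state-dependent noise. The difference lies in how the scalar recursion is solved.

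The paper bounds the conditional noise by $\sigma^2+\kappa\|G(y_k)\|^2$. This gives the bound $(1-\beta_k)^2\bigl(1+3(k+2)^{-3/2}\bigr)$ on the contraction factor. With this bound the exact contraction $1-\beta_k$ fails for $k+2\le 6$. The paper therefore settles for $q_k\le 1-\frac{3}{4(k+2)}$, checked directly for small $k$. It also discards the extra $1/\sqrt{k+1}$ decay of $\|G(\bar y_k)\|^2$, lumping both forcing terms into $S/(k+2)^{3/2}$. It then closes with the inductive ansatz $r_k\le 4S/\sqrt{k+2}$, which is close in spirit to your fallback.

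You instead keep the sharper bound $\sigma^2+(\kappa-1)\|G(y_k)\|^2$, which lowers the factor to $1+2(k+2)^{-3/2}$. With $m=k+2$, the condition $(1-\beta_k)\bigl(1+2m^{-3/2}\bigr)\le 1$ is equivalent to $m^{3/2}-2m+2\ge 0$. The minimum of this over $m>0$ is $22/27$, so the fallback is never needed, and the weights $k+1$ telescope exactly. This yields a sharper statement:
\[
\mathbb E\|e_N\|^2\le\frac{2\sigma^2}{L^2\kappa\sqrt{N+1}}+\frac{188(\kappa-1)(1+\ln N)}{N+1}\,\|x_0-x^\star\|^2 .
\]
It shows that, within this lemma, only the additive variance forces the $1/\sqrt{N}$ rate; the state-dependent part decays like $\log N/N$. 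The paper's route, by contrast, tolerates the cruder noise bound and cruder constants.

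Two bookkeeping corrections:
\begin{itemize}
\item Your constant is $C=188(\kappa-1)$, not $C\approx 188$. So the factor $\kappa$ in front of $\|x_0-x^\star\|^2$ is genuinely required (via $\kappa-1\le\kappa$), not a cosmetic loosening using $\kappa\ge1$.
\item The absorption step is immediate from $\sum_{j=1}^N (j(j+1))^{-1/2}\le\sum_{j=1}^N j^{-1/2}\le 2\sqrt{N}$. This gives $376\,\kappa\|x_0-x^\star\|^2/\sqrt{N+1}$, comfortably within the stated $752$.
\end{itemize}
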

 
The two terms in~\eqref{eq:lemB_precise} mirror the two noise sources
in Assumption~\ref{ass:bounded_v}: the additive variance $\sigma^2$
and the state-dependent part $\kappa\|G\|^2$, the latter controlled
through~\eqref{eq:lemA_ybound}. The key mechanism is that anchoring
contracts the deviation: the interpolation toward $x_0$ multiplies the
error by $(1-\beta_k)$ at every step, which yields a contraction rate
of $1-\Theta(1/k)$ in the error recursion. This contraction is strong
enough to keep the accumulated noise at $\mathcal{O}(1/\sqrt{N})$ with a single
sample per iteration, without variance reduction. Combining the two lemmas
through the Lipschitz decomposition above yields our main stochastic
guarantee.

\begin{theorem}[Last-iterate bound for stochastic GOMA] \label{thm:last_iter_stoch_rho0} {\normalfont [Proof in \cref{appx:last_iter_stoch_rho0}.]} Let \(G:\mathbb{R}^d\to\mathbb{R}^d\) be monotone and \(L\)-Lipschitz and $\widehat G(x,\xi)$ be a stochastic oracle following Assumption \ref{ass:bounded_v}. Then for the updates described in~\eqref{eq:sto_updates} with $\beta_k=\frac{1}{k+2}$ and $\eta_k=\frac{1}{L\sqrt{\kappa} (k+2)^{3/4}}$, we have for all $N\ge 0$,
\begin{align*}
\mathbb{E}\|G(x_N)\|^2 \;\le\; \frac{ 1570 L^2 \kappa \|x_0-x^\star\|^2}{\sqrt{N+1}} + \frac{8\,\sigma^2 }{\kappa\sqrt{N+1}}.
\end{align*}
\end{theorem}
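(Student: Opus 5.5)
The plan is to combine Lemma~\ref{lem:det-reference} and Lemma~\ref{lem:stoch-stability} through the Lipschitz decomposition already sketched in the proof strategy. For any realization, write $G(x_N) = G(\bar x_N) + \bigl(G(x_N)-G(\bar x_N)\bigr)$. Young's inequality $\|u+v\|^2\le 2\|u\|^2+2\|v\|^2$ and $L$-Lipschitzness then give
\begin{equation*}
\|G(x_N)\|^2 \le 2\|G(\bar x_N)\|^2 + 2L^2\|e_N\|^2, \qquad e_N = x_N-\bar x_N .
\end{equation*}
Since the reference trajectory~\eqref{eq:reference_traj} is deterministic, taking expectations only affects the second term, so
\begin{equation*}
\mathbb{E}\|G(x_N)\|^2 \le 2\|G(\bar x_N)\|^2 + 2L^2\,\mathbb{E}\|e_N\|^2 .
\end{equation*}

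Next I substitute the two lemmas, whose schedules $\beta_k=\frac{1}{k+2}$ and $\eta_k=\frac{1}{L\sqrt{\kappa}(k+2)^{3/4}}$ are exactly those in the theorem.

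\textbf{Deterministic term, $N\ge1$.} Bound~\eqref{eq:lemA_xbound} gives $2\|G(\bar x_N)\|^2\le 66L^2\kappa\|x_0-x^\star\|^2/\sqrt{N+1}$.

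\textbf{Deterministic term, $N=0$.} Here $\bar x_0=x_0$, and Lipschitzness with $G(x^\star)=0$ gives $\|G(x_0)\|^2\le L^2\|x_0-x^\star\|^2$. Since $\kappa\ge1$, this is also at most $33L^2\kappa\|x_0-x^\star\|^2/\sqrt{1}$. (Alternatively, \eqref{eq:lemA_ybound} at $k=0$ with $\bar y_0=x_0$ gives the constant $94$, which is still affordable.)

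\textbf{Stochastic term.} Lemma~\ref{lem:stoch-stability} gives
\begin{equation*}
2L^2\,\mathbb{E}\|e_N\|^2 \le \frac{1}{\sqrt{N+1}}\left(\frac{8\sigma^2}{\kappa} + 1504\,L^2\kappa\|x_0-x^\star\|^2\right).
\end{equation*}

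Summing, the coefficient of $L^2\kappa\|x_0-x^\star\|^2/\sqrt{N+1}$ is $66+1504=1570$, and the noise term is $8\sigma^2/(\kappa\sqrt{N+1})$. This is exactly the claimed bound.

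There is no real obstacle at this level. All the analytic work sits in the two lemmas: Lemma~\ref{lem:det-reference} is the potential argument along the reference trajectory, and Lemma~\ref{lem:stoch-stability} is the contraction-by-$(1-\beta_k)$ error recursion, where the state-dependent variance is controlled through~\eqref{eq:lemA_ybound}. The only points to check are that both lemmas are applied with identical schedules and initial point $\bar x_0=x_0$, and that the edge case $N=0$ is covered, as above.
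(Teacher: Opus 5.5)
Your proposal is correct and is exactly the paper's argument: the Lipschitz split $\|G(x_N)\|^2\le 2\|G(\bar x_N)\|^2+2L^2\|e_N\|^2$ combined with Lemma~\ref{lem:det-reference} (giving $66$) and Lemma~\ref{lem:stoch-stability} (giving $1504$ and $8\sigma^2/\kappa$), which sum to $1570$. The only cosmetic difference is at $N=0$, where the paper bounds $\|G(x_0)\|^2\le L^2\|x_0-x^\star\|^2\le 1570L^2\kappa\|x_0-x^\star\|^2$ directly instead of going through the decomposition; either way the case is covered.
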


Theorem~\ref{thm:last_iter_stoch_rho0} establishes, to the best of our knowledge, the first last-iterate convergence guarantee in the squared operator norm $\mathbb{E}\|G(x_N)\|^2$ for unconstrained stochastic monotone Lipschitz VIs, \textbf{without variance reduction or growing batch sizes} and the guarantee holds for every $\kappa\ge1$, \textbf{covering state-dependent noise} whose variance can grow unboundedly with $\|G(x_N)\|^2$. Concretely, GOMA attains $\mathbb{E}\|G(x_N)\|^2 \le \varepsilon$ in $N = \mathcal{O}(1/\varepsilon^2)$ iterations. 

We now place our stochastic guarantees in context by comparing them to existing last-iterate results.

\textbf{Comparison with existing methods.} FEG~\citep{lee2021fast} performs two operator evaluations per iteration, and its stochastic guarantee requires the per-iteration variance to decay as $\sigma_k = \mathcal{O}(1/k)$; under constant noise the error term accumulates as $\mathcal{O}(k)$, so the bound no longer vanishes and the last-iterate guarantee is lost unless growing minibatches enforce the decay, whereas GOMA converges with a constant batch size and non-vanishing noise. 

E-Halpern~\citep{cai2022stochastic} builds on anchored Popov with recursive variance reduction (PAGE) to obtain a single-call algorithm with SFO complexity $\mathcal{O}(1/\varepsilon^3)$ on $\mathbb{E}\|G\|$, at the additional cost of assuming Lipschitz continuity of the stochastic oracle in expectation; GOMA removes both the variance reduction and this oracle assumption, at a slower $\mathcal{O}(1/\varepsilon^4)$ complexity. 

RAIN/RAIN++~\citep{chen2024near} combine anchoring with recursive variance reduction to obtain near-optimal stochastic first-order oracle (SFO) complexity for smooth convex–concave minimax problems, matching the lower bound they derive up to logarithmic factors, but through a multi-phase scheme with restarting schedules whose returned point is an iterate sampled uniformly along the trajectory rather than the last one, while GOMA uses a single-call, single-phase update and provably returns the last iterate. 

Most recently, \citet{alacaoglu2026solving} remove the bounded-variance assumption entirely, allowing the variance to grow with $\|z\|^2$, and reach an $\tilde{\mathcal{O}}(1/\varepsilon^4)$ residual complexity using multilevel Monte Carlo or STORM variance reduction together with growing batches, again reporting a randomly selected iterate; GOMA attains its last-iterate guarantee without variance reduction or growing batches.

A separate line of work targets the constrained regime with the gap function as progress measure. \citet{abe2025boosting} propose GABP, a single-call payoff-perturbed algorithm with periodic anchor restarts, and prove $\mathbb{E}[\mathrm{GAP}(\pi^{T+1})] = \tilde{\mathcal{O}}(1/T^{1/7})$ under bounded-variance noise on a compact domain $X$, with a uniform operator bound valid only on compact $X$. This setting is not directly comparable to our unconstrained $\mathbb{R}^d$, squared-operator-norm regime (Section~\ref{sec:prelim} explains why these measures are not interchangeable); within these complementary regimes GOMA is faster, $\mathcal{O}(1/T^{1/4})$ on $\mathbb{E}\|G(x_T)\|$ versus their $\tilde{\mathcal{O}}(1/T^{1/7})$ on $\mathbb{E}[\mathrm{GAP}]$.

\textbf{Summary.} As Table~\ref{tab:comparison} shows, GOMA occupies a distinctive point in this design space: it is the only method that guarantees \emph{last-iterate} convergence in the squared operator norm using a single operator evaluation per iteration, a constant batch size, and no variance reduction or restarts, while tolerating state-dependent noise whose variance grows with $\|G\|^2$. The other methods achieve faster rates, but only through variance reduction or growing batches that reduce or eliminate the noise terms complicating stochastic last-iterate analysis, and several return an averaged or randomly selected iterate rather than the last. Concretely, GOMA attains a last-iterate rate of $\mathcal{O}(1/\sqrt{k})$ on $\mathbb{E}\|G(x_k)\|^2$ (equivalently $\mathcal{O}(1/k^{1/4})$ on $\mathbb{E}\|G(x_k)\|$), i.e.\ an SFO complexity of $\mathcal{O}(1/\varepsilon^4)$. This does not match the optimal rate of $\tilde{\mathcal{O}}(1/k)$ on $\mathbb{E}\|G(x_k)\|^2$ ($\tilde{\mathcal{O}}(1/\varepsilon^2)$ SFO complexity), which \citet{chen2024near} establish as a lower bound and which variance-reduced, growing-batch methods attain; closing this gap without such mechanisms remains an open question. Within the class of methods that use neither variance reduction nor growing batches, GOMA provides the first and best stochastic last-iterate guarantee on monotone Lipschitz VIs.

\section{Experiments}

\subsection{Negative-Comonotone Quadratic Saddle Point (Deterministic)}\label{sec:quadratic}

\textbf{Setup.} We performed a toy experiment on a simple quadratic function also used in \citep{lee2021fast},
\begin{equation}
f(x,y) = -\tfrac{1}{6}x^2 + \tfrac{2\sqrt{2}}{3}xy + \tfrac{1}{6}y^2.
\end{equation}
This instance is $\rho$-comonotone with $\rho = -1/3 < 0$ (i.e., \emph{negative comonotone}), which lies outside the scope of our theory (our analysis requires monotonicity, $\rho \geq 0$). We include it for direct comparison with prior work \citep{lee2021fast}, and provide an additional experiment on a \emph{monotone} instance covered by our theory, which is in Appendix~\ref{appx:exp_monotone_qp}.

\textbf{Methods.}
We compare several first-order methods on this problem, including
EG, DSEG, EAG-C, EAG-V, Nesterov, FEG, anchored Popov, and our proposed GOMA.
For GOMA, we use a constant step size $\eta = 0.2$ and 
$\gamma_k = 0.8(1 - \beta_k)$ with $\beta_k = \frac{2}{k+6}$.
All methods are evaluated by plotting the squared operator norm
$\|F(z_k)\|^2$ against the number of gradient calls.
\begin{figure}[t]
    \centering
    \includegraphics[width=0.95\linewidth]{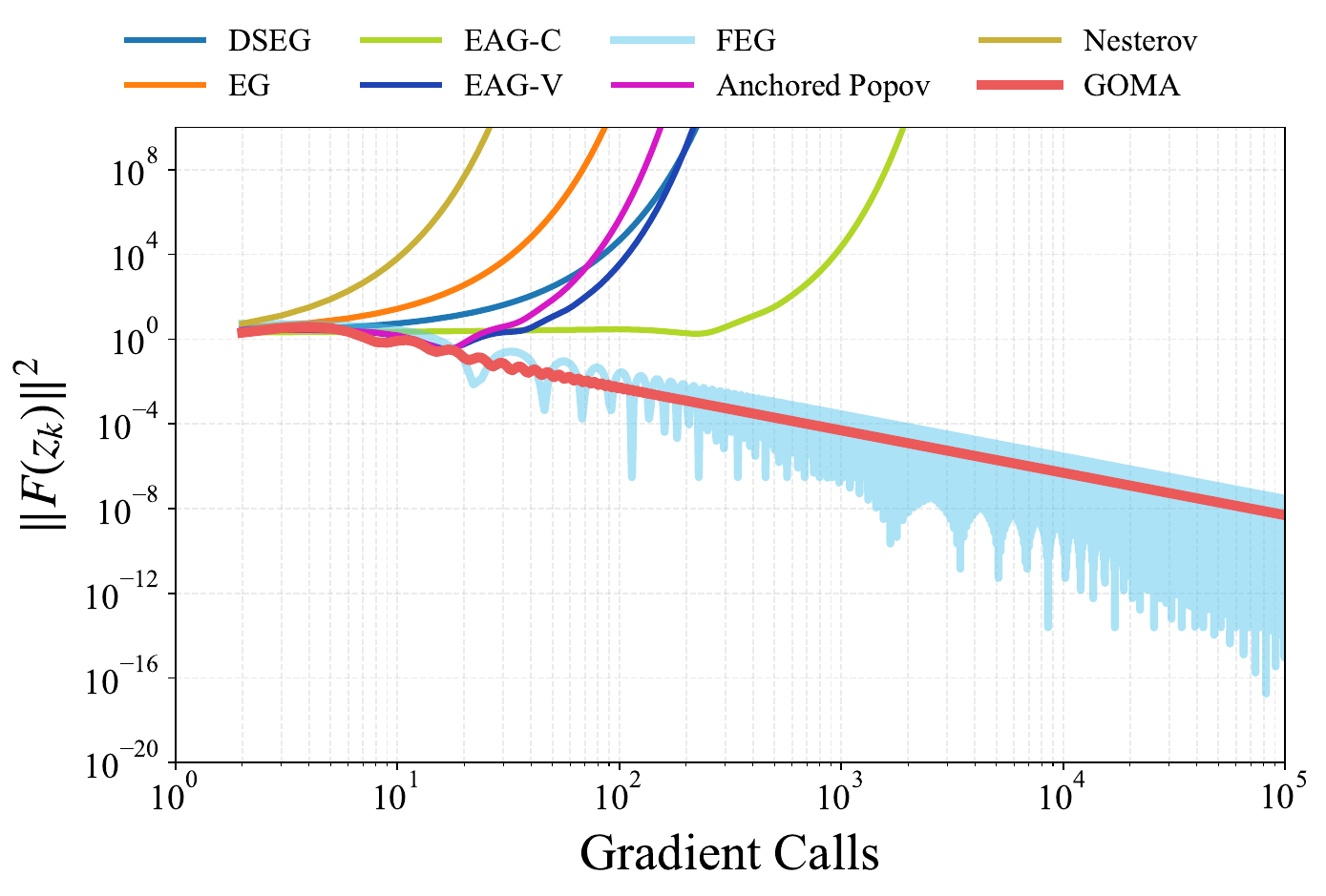}
    \caption{Quadratic experiment in the deterministic setting ~\S\ref{sec:quadratic}. Only GOMA and FEG converge. \textbf{GOMA converges without oscillations}, unlike FEG’s dynamics.}
    \label{fig:comparison}
\end{figure}

\captionsetup{font=small,labelfont=bf,justification=raggedright,singlelinecheck=false}

\begin{figure*}[t]                             
  \centering
  \begin{subfigure}[t]{0.42\textwidth}
    \centering
    \includegraphics[width=0.95\linewidth,trim=2pt 4pt 2pt 2pt,clip]{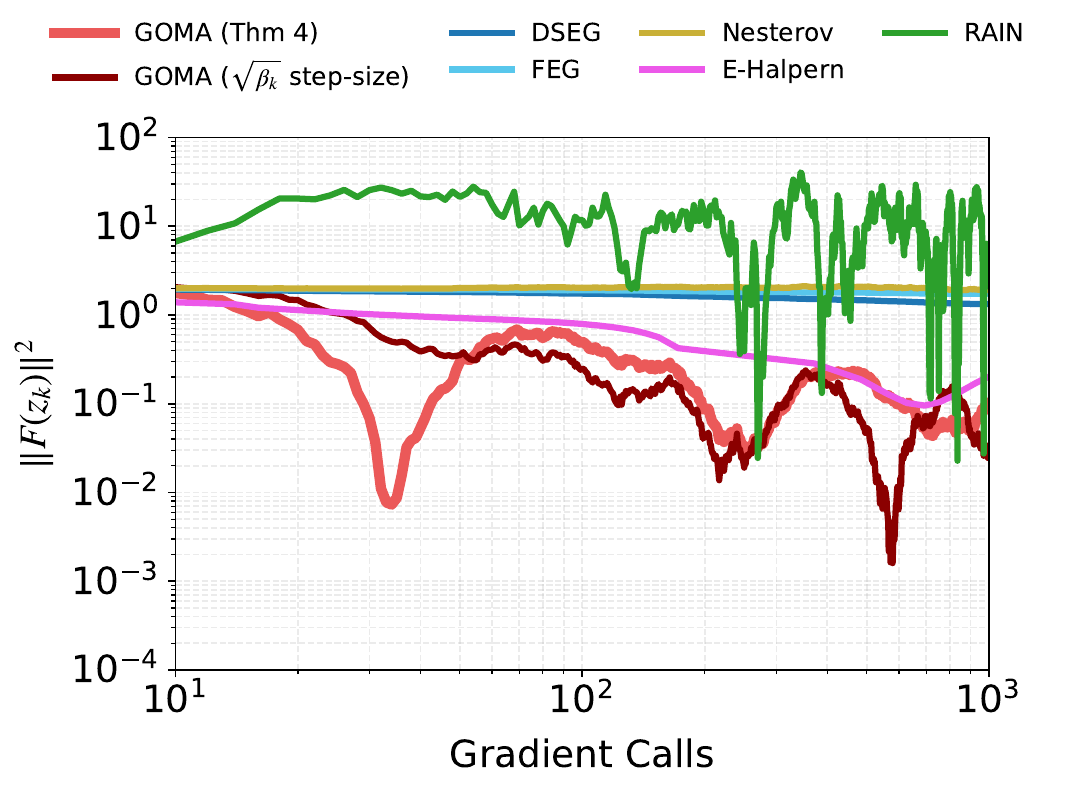}
    \label{fig:goma}
  \end{subfigure}\hfill
  \begin{subfigure}[t]{0.42\textwidth}
    \centering
    \includegraphics[width=0.95\linewidth,trim=2pt 4pt 2pt 2pt,clip]{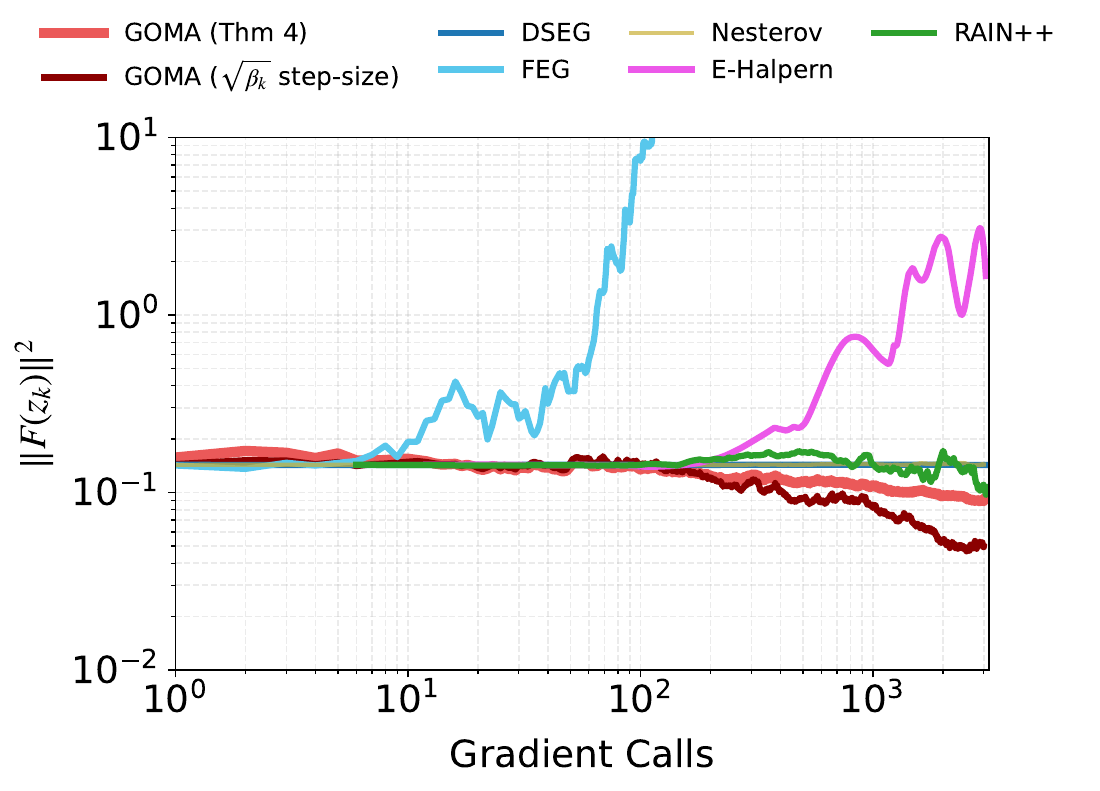}
    \label{fig:goma3_sto}
  \end{subfigure}
  \caption{Stochastic experiments ~\S\ref{sec:exp_stochastic}. Norm of the gradient operator vs. the number of gradient calls. 
\textbf{Left:} Stochastic bilinear example with additive noise ($\sigma = 0.5$), ~\S\ref{exp_stoch_additive}. 
\textbf{Right:} Finite-sum saddle-point problem with multiplicative noise, ~\S\ref{sec:exp_stoch_multiplicative}. 
\textbf{Stochastic \textsc{GOMA} consistently outperforms the baselines in both tasks}, despite using a milder hyperparameter search and simpler setup.}
  \label{fig:exp_stochastic}
\end{figure*}

\textbf{Hyperparameter selection.}
All baseline methods are run under the identical experimental setting of
Figure~2 in \cite{lee2021fast}, including the same initialization, step size rules,
and algorithmic parameters.
This allows for a direct and fair comparison with the original results, to which
we additionally include our proposed GOMA. 

\textbf{Results.}
See Figure ~\ref{fig:comparison}. \textsc{GOMA} and \textsc{FEG} converge with an accelerated rate,
whereas EG, \textsc{DSEG}, \textsc{EAG-C}, \textsc{EAG-V}, Anchored Popov and Nesterov diverge.
Moreover, on this instance (under our tuning), \textsc{GOMA} yields uniformly smaller residuals than \textsc{FEG} by an approximately constant factor, with near-parallel log–log curves indicating the same asymptotic rate but a better constant.

\subsection{Stochastic Games}\label{sec:exp_stochastic}

We consider two stochastic settings. 
The first is a low-dimensional toy problem ($d=2$) that satisfies the additive Gaussian noise assumption ($\kappa = 1$). The second is a finite-sum saddle-point problem ($d=10$) with state-dependent multiplicative noise ($\kappa > 1$). Theorem~\ref{thm:last_iter_stoch_rho0} provides guarantees in both regimes, while the stochastic theory of FEG, E-Halpern, and RAIN++ applies only to the first.

\textbf{Methods.}
We compare GOMA \eqref{eq:goma_gamma0} against DSEG, FEG, E-Halpern (with PAGE variance reduction), RAIN++, and
Nesterov's accelerated method (with negative momentum, following \cite{gidel2019negative}). GOMA is run with a \textbf{single stochastic sample per iteration}, matching the constant-batch setting of Theorem~\ref{thm:last_iter_stoch_rho0}; we use no variance reduction and no growing batch size. Several of the baselines (E-Halpern, RAIN++) rely on variance reduction by design, and we use their authors' recommended implementations. 

\subsubsection{Case I: Bounded Variance ($\kappa = 1$)}\label{exp_stoch_additive}

\textbf{Setup.}
We consider the stochastic bilinear game
\begin{equation}
    f(x, y) = Lxy, \quad L = 1,
\end{equation}
with saddle operator $F(z) = F(x,y) = (Ly, -Lx)$ and solution $z^\star = (0,0)$. The stochastic oracle is given by $\widehat{F}(z, \xi) = F(z) + \xi$, where $\xi \sim \mathcal{N}(0, \sigma^2 I)$ with $\sigma = 0.5$. Since the noise is state-independent, this setting satisfies Assumption~\ref{ass:bounded_v} with $\kappa = 1$. We initialize at $z_0 = (1, 1)$ and run all methods for $10^3$ gradient calls. The details about the hyperparameter selection are deferred to Appendix~\ref{appx:exp_stoch_detail_kappa_1}.



\textbf{Results.}
Figure~\ref{fig:exp_stochastic} (left) compares the convergence of the squared operator norm under additive noise. \textbf{GOMA} achieves the fastest convergence, reaching a residual nearly an order of magnitude smaller than all baselines within $10^3$ gradient calls. E-Halpern exhibits steady progress due to its variance-reduction mechanism; however, it converges more slowly and plateaus around $10^{-1}$.

The two-time-scale extragradient-type methods, DSEG and FEG, show little to no convergence, remaining close to their initial values throughout the experiment. RAIN displays highly unstable behavior with large oscillations and fails to converge. 
These results highlight the effectiveness of combining anchoring with single-call stochastic gradients: \textbf{GOMA achieves the best convergence without relying on variance reduction or multi-point oracle evaluations}.

\subsubsection{Case II: State-Dependent Variance ($\kappa > 1$)}\label{sec:exp_stoch_multiplicative}

\textbf{Setup.}
We consider the finite-sum saddle problem
\[
\min_{\theta\in\R^{d}}\ \max_{\varphi\in\R^{d}}\;
\frac{1}{n}\sum_{i=1}^{n}\Big(\theta^{\top}b_i+\theta^{\top}A_i\varphi+c_i^{\top}\varphi\Big),
\]
with saddle operator \(F(\theta,\varphi)=[\bar b+\bar A\varphi;\ -(\bar A^{\top}\theta+\bar c)]\),
where \(\bar A=\tfrac1n\sum_i A_i\) and \(\bar b,\bar c\) are the sample means.
We set \(n=d=10\) and \(A_i=\mathrm{diag}(0,\ldots,\lambda_i,\ldots,0)\) with \(\lambda_i\) equally spaced in \([\tau,1]\), \(\tau=0.1\).
The SFO returns \(F_i(\theta,\varphi)=[b_i+A_i\varphi;\ -(A_i^{\top}\theta+c_i)]\), sampling \(i\) uniformly.

\textbf{Hyperparameter selection.}
For GOMA, we use $\beta_k = 1/(k+2)$ and $\eta_k = c\sqrt{\beta_k}$ with $c$ selected via grid search. Full details are provided in Appendix~\ref{app:hyperparams}.

\textbf{Results.}
Figure~\ref{fig:exp_stochastic} (right) illustrates the convergence behavior of the squared operator norm under multiplicative noise. This setting is covered by Theorem~\ref{thm:last_iter_stoch_rho0} but falls outside the bounded-variance assumptions of FEG, E-Halpern, and RAIN++. Both RAIN++ and \textsc{GOMA} exhibit convergence empirically. In contrast, DSEG stagnates at a high plateau (around $10^{-1}$) and fails to make further progress. As shown in Appendix~\ref{appx:exp_stochastic}, its theory predicts arbitrarily slow convergence in higher dimensions, which is consistent with this behavior.

Overall, these results show that \textbf{\textsc{GOMA} converges in this multiplicative noise regime as guaranteed by Theorem~\ref{thm:last_iter_stoch_rho0}, while the bounded-variance baselines (FEG, E-Halpern) exit their theoretical regime and diverge}.

\section{Discussion}

Our results show that anchoring, when combined with generalized optimistic dynamics, offers a principled approach to addressing three central challenges in variational inequality algorithms: per-iteration efficiency, robustness to stochasticity, and last-iterate acceleration.

In particular, GOMA attains the optimal $\mathcal{O}(1/k^2)$ deterministic last-iterate rate using a single gradient evaluation per iteration, and a $\mathcal{O}(1/\sqrt{k})$ stochastic last-iterate rate without the variance reduction or growing batches that prior last-iterate guarantees depend on. This shows that strong last-iterate guarantees are compatible with the one-sample online model, and our experiments confirm that GOMA is the most robust method under unbounded-variance noise.

The most important open direction for the stochastic theory is to close the gap between the $\mathcal{O}(1/\varepsilon^4)$ SFO complexity of GOMA and the $\tilde{\mathcal{O}}(1/\varepsilon^2)$ SFO lower bound established by \citet{chen2024near}, without resorting to variance reduction or growing batches. Last-iterate convergence of GOMA in the constrained setting, where the convergence measure changes from $\|G\|^2$ to the gap function, remains open. Beyond the monotone setting, extending the analysis to broader operator classes such as negative comonotone operators is another natural direction. Finally, applying these methods at scale in reinforcement learning and adversarial training, where stochasticity, stability, and gradient efficiency are central concerns, is an exciting practical direction.

\section*{Impact Statement}
This paper presents work aimed at advancing the field of Variational Inequality Problems. Our work being focused on the theoretical aspect, we do not foresee any direct societal impact. Regarding indirect impact, while a common positive, foreseeable impact of such research aiming to find better optimisation algorithms is the more efficient use of computing resources, it is important to be mindful that, across history, cost-lowering technological improvements have nevertheless often led to an increase in consumption due to the Jevons paradox. 

\section*{Acknowledgements}

This research was partially supported by the Canada CIFAR AI Chair program (Mila), Simon Lacoste-Julien is a CIFAR Associate Fellow in the Learning in Machines \& Brains program.

We would like to thank Zichu Liu, Juan David Guerra and Mehran Shakerinava for their feedbacks on the initial draft of this paper.

We also acknowledge the use of AI assistants during this work.
ChatGPT (OpenAI) and Claude Code (Anthropic) helped identify errors in
intermediate steps while we were developing the proofs of our main
theorems. In particular, ChatGPT (OpenAI, Pro mode) found a sign error
that invalidated an earlier version of our stochastic analysis and
proposed the corrected proof strategy for
Theorem~\ref{thm:last_iter_stoch_rho0}, based on a deterministic
reference trajectory and a stochastic stability argument.


\bibliography{example_paper}
\bibliographystyle{icml2026}

\appendix
\crefalias{section}{appendix}
\crefalias{subsection}{appendix}

\onecolumn

\addcontentsline{toc}{section}{Appendix}
\part{Appendix} 
\parttoc

\section{Proof of \cref{sec:deterministic}}

\textbf{Generalized Optimistic Method with Anchoring (GOMA):}
\begin{equation}\tag{GOMA}
\begin{aligned}
y_{k} &= \beta_k x_0 + (1 - \beta_k)x_k - \gamma_k G (y_{k-1}) \\
x_{k+1} &= \beta_k x_0 + (1 - \beta_k)x_k - \eta_k G(y_{k}).
\end{aligned}
\end{equation}

Define the potential function:
\begin{equation}\repeq{eq:potential}
     V_k = a_k\|G(x_k)\|^2 + b_k\langle G(x_k),\,x_k - x_0\rangle + c_kL^2\|x_k - y_{k-1}\|^2,
\end{equation}
where $a_k = c_k = \frac{b_k\eta_k}{2\beta_k}$, and $b_{k+1} = \frac{b_k}{1 - \beta_k}$.

\subsection{Proof of \cref{lemma:potential_eta}}\label{appx:lemma1}

This proof is inspired by the proof of anchored Popov from \citet{TranDinh2021HalpernTypeAA}.

\begin{replemma}{lemma:potential_eta}
Let $G$ be monotone and $L$-Lipschitz, and consider the iterates
\begin{equation}\repeq{eq:goma_triangle}
\begin{aligned}
y_k &= \beta_kx_0 + (1-\beta_k)x_k - \eta_*(1-\beta_k) G(y_{k-1}), \\
x_{k+1} &= \beta_kx_0 + (1-\beta_k)x_k - \eta_* G(y_k).
\end{aligned}
\end{equation}

We prove that the potential function \eqref{eq:potential} is decreasing if the step-size $\eta_k$ satisfies the following conditions:
\begin{equation}\repeq{eq:cond1}
\eta_{k+1}\;\le\;\frac{\beta_{k+1}}{2M\,\eta_k\,\beta_k(1-\beta_k)}.
\end{equation}
\begin{equation}\repeq{eq:cond2}
    1 - 2M\eta_k^2(1 - \beta_k)^2 - M\eta_k^2\beta_k^2 \ge 0.
\end{equation}
\begin{equation}\repeq{eq:cond3}
\hspace{-1.5em}\scalebox{0.85}{$\displaystyle
\eta_{k+1}
\le
\frac{\beta_{k+1}}{2\,\beta_k(1-\beta_k)}
\Bigl[\frac{2(1 - \beta_k^2) -4M\eta_k^2(1 - \beta_k)^2 -\beta_k^2}
{1 - 2M\eta_k^2(1 - \beta_k)^2 - M\eta_k^2\beta_k^2}\eta_k
\Bigr].$}
\end{equation}

for $M := 2L^2(1+\theta)$ and $\theta \ge 0$. With $\widetilde c_{k+1}\ge0$ for $\theta=2$, these give:
$$V_k-V_{k+1}\ge \widetilde c_{k+1}L^2\|x_{k+1}-y_k\|^2.$$
\end{replemma}

\begin{proof}
First, from the update equations we obtain the three key difference identities:
\begin{align}
x_{k+1}-x_k &= \beta_k(x_0-x_k) - \eta_k G(y_k), \label{eq:h1}\\
x_{k+1}-x_k &= \frac{\beta_k}{1-\beta_k}(x_0 - x_{k+1}) - \frac{\eta_k}{1-\beta_k}G(y_k),\label{eq:h2}\\
x_{k+1}-y_k &= -\eta_k\bigl[G(y_k)-(1 - \beta_k)G(y_{k-1})\bigr]\label{eq:h3}.
\end{align}
Next, monotonicity of $G$ gives

$$
  \langle G(x_{k+1})-G(x_k),\,x_{k+1}-x_k \rangle \ge 0
$$

$$
  \langle G(x_{k+1}),\,x_{k+1}-x_k \rangle \ge \langle G(x_k),\, x_{k+1}-x_k\rangle
$$
Using \cref{eq:h1} and \cref{eq:h2}, we write:
$$
  \langle G(x_{k+1}),\, \frac{\beta_k}{1-\beta_k}(x_0 - x_{k+1}) - \frac{\eta_k}{1-\beta_k}G(y_k) \rangle \ge \langle G(x_k),\,\beta_k(x_0-x_k) - \eta_k G(y_k) \rangle
$$
Rearranging:
$$
  \frac{\beta_k}{1-\beta_k}\langle G(x_{k+1}),\, x_0 - x_{k+1} \rangle  \ge \beta_k\langle G(x_k),\,x_0-x_k \rangle - \eta_k\langle G(x_k),\, G(y_k) \rangle +  \frac{\eta_k}{1-\beta_k}\langle G(x_{k+1}),\, G(y_k)\rangle
$$
Multiplying this inequality by $\frac{b_k}{\beta_k}$ and taking $b_{k+1} = \frac{b_k}{1 - \beta_k}$:

\begin{align*}
    \underbrace{b_k\langle G(x_k),x_k-x_0\rangle - b_{k+1}\langle G(x_{k+1}),x_{k+1}-x_0\rangle}_{T[1]}
    \ge &
    \frac{b_k\,\eta_k}{\beta_k(1-\beta_k)}\langle G(x_{k+1}),G(y_k)\rangle
    - \frac{b_k\,\eta_k}{\beta_k}\langle G(x_k),G(y_k)\rangle.\\
    =& b_{k+1}\,\eta_k\langle G(x_{k+1}),G(y_k)\rangle
    + \frac{b_k\,\eta_k}{\beta_k}\langle G(x_{k+1}) - G(x_k),G(y_k)\rangle.
\end{align*}

Adding the $a_k$-terms and the $c_k$-terms gives

\begin{align}
    V_k - V_{k+1}
 \ge&
 a_k\|G(x_k)\|^2 - a_{k+1}\|G(x_{k+1})\|^2
 + T[1]
 + c_kL^2\|x_k-y_{k-1}\|^2 - c_{k+1}L^2\|x_{k+1}-y_k\|^2\nonumber\\
 \ge&  a_k\|G(x_k)\|^2 - a_{k+1}\|G(x_{k+1})\|^2 \nonumber\\
 &+ b_{k+1}\,\eta_k\langle G(x_{k+1}),G(y_k)\rangle
    + \frac{b_k\,\eta_k}{\beta_k}\langle G(x_{k+1}) - G(x_k),G(y_k)\rangle\nonumber\\
 &+ c_kL^2\|x_k-y_{k-1}\|^2 - c_{k+1}L^2\|x_{k+1}-y_k\|^2 \label{eq:inter-v_diff}
\end{align}

Next, we upper bound $\|G(y_k) - G(y_{k-1})\|^2$ as follow:
\begin{align}
\|G(y_k) - G(y_{k-1})\|^2 
&= \|G(y_k) - G(x_k) + G(x_k) - G(y_{k-1})\|^2 \notag \\
&\leq 2\|G(x_k) - G(y_k)\|^2 + 2\|G(x_k) - G(y_{k-1})\|^2 \notag\\
&\leq 2\|G(x_k)\|^2 - 4\langle G(x_k), G(y_k) \rangle + 2\|G(y_k)\|^2 + 2L^2\|x_k - y_{k-1}\|^2.\label{eq:Gyk-Gykp1}
\end{align}
Where we used the Lipschitz inequality between $x_k$ and $y_{k-1}$ in the last inequality.   
\bigskip

We consider the following and use Lipschitzness between $x_{k+1}$ and $y_k$ and \cref{eq:h3} to bound the left-hand side

\begin{align}
    \|G(x_{k+1}) - G(y_k)\|^2 + \theta L^2\|x_{k+1}-y_k\|^2 &\leq  (1+\theta)L^2\|x_{k+1}-y_k\|^2\\
    &\leq (1+\theta)L^2\eta_k^2\bigl\|G(y_k)-(1-\beta_k)\,G(y_{k-1})\bigr\|^2 \label{eq:h4}
\end{align}

We can upper bound the right-hand side. We first use the inequality $\|a + b\|^2 \leq 2\|a\|^2 + 2\|b\|^2$. Then we use the bound in the upper bound on $\|G(y_k) - G(y_{k-1})\|^2 $.

\begin{align}
    &(1+\theta)L^2\eta_k^2\bigl\|G(y_k)-(1-\beta_k)\,G(y_{k-1})\bigr\|^2  \nonumber\\
& \le 2L^2(1+\theta)(1 - \beta_k)^2\,\eta_k^2\,\bigl\|G(y_k)-\,G(y_{k-1})\bigr\|^2 + 2L^2(1+\theta)\,\eta_k^2\,\beta_k^2\|G(y_{k})\|^2\nonumber\\
& \overset{\eqref{eq:Gyk-Gykp1}}{\leq} 4L^2(1+\theta)\,\eta_k^2(1 - \beta_k)^2\,\bigl(\|G(x_k)\|^2 - 2\langle G(x_k), G(y_k) \rangle +  \|G(y_k)\|^2) \nonumber\\
    &  + 4L^4(1+\theta)\,\eta_k^2(1 - \beta_k)^2\,\|x_k - y_{k-1}\|^2 +  2L^2(1+\theta)\,\eta_k^2\,\beta_k^2\|G(y_{k})\|^2
\end{align}

Now we expand the quadratic in the left-hand side of \cref{eq:h4} and also substitute the right-hand side from above.

\begin{align*}
    &\|G(x_{k+1})\|^2 - 2\langle G(x_{k+1}), G(y_k) \rangle + \|G(y_k)\|^2+ \theta L^2\|x_{k+1}-y_k\|^2\\
    & \leq 4L^2(1+\theta)\,\eta_k^2(1 - \beta_k)^2\,\bigl(\|G(x_k)\|^2 - 2\langle G(x_k), G(y_k) \rangle +  \|G(y_k)\|^2) \nonumber\\
    &  + 4L^4(1+\theta)\,\eta_k^2(1 - \beta_k)^2\,\|x_k - y_{k-1}\|^2 +  2L^2(1+\theta)\,\eta_k^2\,\beta_k^2\|G(y_{k})\|^2
\end{align*}

Rearranging and setting $M := 2L^2(1+\theta)$, we get:

\begin{align*}
&\|G(x_{k+1})\|^2
-2\langle G(x_{k+1}),\,G(y_k)\rangle
-2M\eta_k^2(1 - \beta_k)^2\,\|G(x_k)\|^2
+4M\eta_k^2(1-\beta_k)^2\,\langle G(x_k),\,G(y_k)\rangle\\
&\quad+\;[1 - 2M\eta_k^2(1 - \beta_k)^2 - M\eta_k^2\beta_k^2]\,\|G(y_k)\|^2
+\theta L^2\,\|x_{k+1}-y_k\|^2\\
&\quad-\;2L^2M\eta_k^2(1-\beta_k)^2\,\|x_k-y_{k-1}\|^2
\;\le\;0.
\end{align*}

Combine terms to get $G(x_{k+1}) - G(x_k)$:
\begin{align*}
&\|G(x_{k+1})\|^2
-2(1 -2M\eta_k^2(1-\beta_k)^2)\langle G(x_{k+1}),\,G(y_k)\rangle
-2M\eta_k^2(1 - \beta_k)^2\,\|G(x_k)\|^2
\\&\quad-4M\eta_k^2(1-\beta_k)^2\,\langle G(x_{k+1}) - G(x_k),\,G(y_k)\rangle
+\;[1 - 2M\eta_k^2(1 - \beta_k)^2 - M\eta_k^2\beta_k^2]\,\|G(y_k)\|^2\\
&\quad+\theta L^2\,\|x_{k+1}-y_k\|^2
-\;2L^2M\eta_k^2(1-\beta_k)^2\,\|x_k-y_{k-1}\|^2
\;\le\;0.
\end{align*}

We multiply this equation by $\frac{a_k}{2M\eta_k^2(1 - \beta_k)^2}$ and add it  to right-hand side of \cref{eq:inter-v_diff} get:

\begin{align}
V_k - V_{k+1}
\;&\ge\;
\underbrace{\Bigl(\frac{a_k}{2M\eta_k^{2}(1-\beta_k)^2}-a_{k+1}\Bigr)}_{S_k^{11}}
        \,\|G(x_{k+1})\|^{2}                                      \nonumber\\
&\quad
+\,2\underbrace{\Bigl(-\frac{a_k (1 - 2M\eta_k^2(1 - \beta_k)^2)}{2M\eta_k^2(1 - \beta_k)^2} +  \frac{b_{k+1}\eta_k}{2} \Bigr)}_{S_k^{12}}
        \,\langle G(x_{k+1}),\,G(y_k)\rangle                      \nonumber\\
&\quad
+\,\underbrace{\Bigl(\frac{a_k}{2M\eta_k^{2}(1 - \beta_k)^2}(1 - 2M\eta_k^2(1 - \beta_k)^2 - M\eta_k^2\beta_k^2)\Bigr)}_{S_k^{22}}
        \,\|G(y_k)\|^{2}                                                                   \nonumber\\
&\quad
+\,\underbrace{\Bigl(-2a_k+\frac{b_k\eta_k}{\beta_k}\Bigr)}_{S_k^{23}}
        \,\langle G(x_{k+1}) - G(x_k),\,G(y_k)\rangle                          \nonumber\\[4pt]
&\quad
+\,L^{2}\underbrace{\Bigl(c_k-a_k\Bigr)}_{\widetilde c_k}
        \,\|x_k-y_{k-1}\|^{2}                                     \nonumber\\
&\quad
+\,L^{2}\underbrace{\Bigl(\frac{a_k\theta}{2M\eta_k^{2}(1-\beta_k)^2}-c_{k+1}\Bigr)}_{\widetilde c_{k+1}}
        \,\|x_{k+1}-y_k\|^{2}.         
        \label{eq:VkVk1_form}
\end{align}

We set $a_k = \frac{b_k\eta_k}{2\beta_k}$ and get $S_k^{23} = 0$. Also we set $c_k = a_k$, then $\tilde{c}_k = 0$.

Now, in order to prove the right-hand side is greater than zero, it is sufficient to show that $\tilde{c}_{k+1} \ge 0$, and $S_k \succeq 0$, where:

\[
S_k \;=\;
\begin{pmatrix}
S_k^{11} & S_k^{12} \\[6pt]
S_k^{12} & S_k^{22}  
\end{pmatrix}.
\]

We simplify $S^{ij}_k$ further using that $a_k = \frac{b_k\eta_k}{2\beta 
_k}$ and $b_{k+1} = \frac{b_k}{1 - \beta_k}$:

\begin{align}
    S^{11}_k &:= \frac{a_k}{2M\eta_k^{2}(1-\beta_k)^2}-a_{k+1}   &&= 
     \frac{b_k}{4M\,\eta_k\,\beta_k(1-\beta_k)^2}
     - \frac{b_k\eta_{k+1}}{2\beta_{k+1}\,(1-\beta_k)},
     \\
    S^{12}_k &:= - \frac{a_k (1 - 2M\eta_k^2(1-\beta_k)^2)}{2M\eta_k^2(1-\beta_k)^2} + \frac{b_{k+1}\eta_k}{2}  &&= -\frac{b_k\bigl(1 - 2M\eta_k^2(1 - \beta_k)\bigr)}
        {4M\,\eta_k\beta_k\,(1-\beta_k)^2}, 
      \\
    S^{22}_k &:= \frac{a_k}{2M\eta_k^{2}(1-\beta_k)^2}(1 - 2M\eta_k^2(1 - \beta_k)^2 - M\eta_k^2\beta_k^2) &&= \frac{b_k}{4M\eta_k\beta_k(1 - \beta_k)^2}\,(1 - 2M\eta_k^2(1 - \beta_k)^2 - M\eta_k^2\beta_k^2).
\end{align}

We need $S^{11}_k\ge0$, $S^{22}_k\ge0$, and  $ S^{11}_k\, S^{22}_k \ge (S^{12}_k)^2 .$

\begin{align}
S_k^{11}\ge0
&\iff \frac{b_k}{4M\,\eta_k\,\beta_k(1-\beta_k)^2}
     - \frac{b_k\eta_{k+1}}{2\beta_{k+1}\,(1-\beta_k)}\ge0
\;\iff\;
\eta_{k+1}\;\le\;\frac{\beta_{k+1}}{2M\,\eta_k\,\beta_k(1-\beta_k)},\\[8pt]
S_k^{22}\ge0
&\iff 1 - 2M\eta_k^2(1 - \beta_k)^2 - M\eta_k^2\beta_k^2 \ge0, \;\iff\;
\eta_{k}\;\le\;\frac{1}{\sqrt{M[2(1 - \beta_k)^2 + \beta_k^2]}}
\end{align}

\begin{align}
S_k^{11}S_k^{22}\ge (S_k^{12})^2
&\iff \nonumber\\
&\Bigl(\frac{b_k}{4M\,\eta_k\,\beta_k(1-\beta_k)^2} - \frac{b_k\eta_{k+1}}{2\beta_{k+1}\,(1-\beta_k)}\Bigr)
\;\frac{b_k}{4M\eta_k\beta_k(1 - \beta_k)^2}\,(1 - 2M\eta_k^2(1 - \beta_k)^2 - M\eta_k^2\beta_k^2)\nonumber\\
&\quad\ge
\Bigl(\frac{b_k\bigl(1 - 2M\eta_k^2(1 - \beta_k)\bigr)}
        {4M\,\eta_k\beta_k\,(1-\beta_k)^2}\Bigr)^2
\nonumber\\
&\iff \Bigl(1 - \frac{2M\,\eta_k\,\beta_k(1-\beta_k)}{\beta_{k+1}}\eta_{k+1}\Bigr)
\;(1 - 2M\eta_k^2(1 - \beta_k)^2 - M\eta_k^2\beta_k^2)\nonumber\\
&\quad\ge
\bigl(1 - 2M\eta_k^2(1 - \beta_k)\bigr)^2
\nonumber\\
&\iff 1 - \frac{\bigl(1 - 2M\eta_k^2(1 - \beta_k)\bigr)^2}{1 - 2M\eta_k^2(1 - \beta_k)^2 - M\eta_k^2\beta_k^2}
\;\ge \frac{2M\,\eta_k\,\beta_k(1-\beta_k)}{\beta_{k+1}}\eta_{k+1}
\nonumber\\
&\iff \frac{\beta_{k+1}}{2M\,\eta_k\,\beta_k(1-\beta_k)}\bigr(1 - \frac{\bigl(1 - 2M\eta_k^2(1 - \beta_k)\bigr)^2}{1 - 2M\eta_k^2(1 - \beta_k)^2 - M\eta_k^2\beta_k^2}\bigl)
\;\ge \eta_{k+1}. \label{eq:last_cond}
\end{align}

We further simplify \cref{eq:last_cond}:
\begin{align}
\eta_{k+1}
\;&\le\;
\frac{\beta_{k+1}}{2M\,\eta_k\,\beta_k(1-\beta_k)}
\Biggl[\frac{2M\eta_k^2(1 - \beta_k^2) -4M^2\eta_k^4(1 - \beta_k)^2 -M\eta_k^2\beta_k^2}
        {1 - 2M\eta_k^2(1 - \beta_k)^2 - M\eta_k^2\beta_k^2}
\Biggr] \nonumber \\
\;&=\;
\frac{\beta_{k+1}}{2\,\beta_k(1-\beta_k)}
\Biggl[\frac{2(1 - \beta_k^2) -4M\eta_k^2(1 - \beta_k)^2 -\beta_k^2}
        {1 - 2M\eta_k^2(1 - \beta_k)^2 - M\eta_k^2\beta_k^2}
\Biggr]\eta_k \nonumber \\
\end{align}

If all these three condition hold, we will have:

\[
V_k - V_{k+1}
\;\ge\; L^2\Bigl(\tfrac{a_k\theta}{2M\eta_k^2(1-\beta_k)^2}-a_{k+1}\Bigr)\,\|x_{k+1}-y_k\|^2.
\]

We show the positivity of the term on the right-hand side, along with the three conditions in \cref{lemma:potential_eta}.

Next, we show that conditions of \cref{eq:cond1}, \cref{eq:cond2}, \cref{eq:cond3} are satisfied, with constant step-size $\eta_k = \eta_* \in (0, \frac{1}{2\sqrt{3}L})$, and the choice of $\beta_k = \frac{2}{k+6}$.

Let $\eta_k \equiv \eta \in \bigl(0,\tfrac{1}{\sqrt{2M}}\bigr)$ and $\beta_k = \tfrac{2}{k+6}$.  

\paragraph{Condition \eqref{eq:cond2}.}
We compute
\[
1 - 2M\eta^2(1-\beta_k)^2 - M\eta^2\beta_k^2
= 1 - M\eta^2\Bigl(2(1-\beta_k)^2 + \beta_k^2\Bigr).
\]
For $\beta_k \in [0,1]$, we have $2(1-\beta_k)^2 + \beta_k^2 \in [\nicefrac{3}{4},2]$.  Thus
\[
1 - M\eta^2\bigl(2(1-\beta)^2+\beta^2\bigr) \;\ge\; 1-2M\eta^2 \;>\; 0
\quad \text{since } M\eta^2 < \tfrac12.
\]
Hence \eqref{eq:cond2} holds.

\paragraph{Condition \eqref{eq:cond1}.}
With $\eta_{k+1}=\eta_k=\eta$, condition \eqref{eq:cond1} reads
\[
2M\eta^2 \;\le\; \frac{\beta_{k+1}}{\beta_k(1-\beta_k)}.
\]
For $\beta_k=\tfrac{2}{k+6}$ we compute
\[
\frac{\beta_{k+1}}{\beta_k(1-\beta_k)}
= \frac{\tfrac{2}{k+7}}{\tfrac{2}{k+6}\left(1-\tfrac{2}{k+6}\right)}
= \frac{(k+6)^2}{(k+7)(k+4)} \;\ge\; 1.
\]
Thus $2M\eta^2 \le 1$, i.e.\ $\eta^2 \le \tfrac{1}{2M}$, suffices. This is satisfied since $\eta < 1/\sqrt{2M}$.

\paragraph{Condition \eqref{eq:cond3}.}
With $\eta_{k+1}=\eta_k=\eta$ and $t := M\eta^2$, condition \eqref{eq:cond3} reduces to
\[
1 \;\le\; \frac{\beta_{k+1}}{2\beta_k(1-\beta_k)}\;
\frac{2(1-\beta_k^2)-4t(1-\beta_k)^2-\beta_k^2}{1-2t(1-\beta_k)^2-t\beta_k^2}.
\]
For $\beta_k=\tfrac{2}{k+6}$, the prefactor simplifies to
\[
\frac{\beta_{k+1}}{2\beta_k(1-\beta_k)}
= \frac{(k+6)^2}{2(k+7)(k+4)}  = \frac{1}{(2+\beta_k)(1-\beta_k)}.
\]
Setting $\beta := \beta_k$, the inequality is equivalent to
\[
\frac{2-3\beta^2-4t(1-\beta)^2}{1-2t(1-\beta)^2-t\beta^2}
\;\ge\; (2+\beta)(1-\beta). \tag{$\star$}
\]
Define
\[
F(t) := \frac{2-3\beta^2-4t(1-\beta)^2}{1-2t(1-\beta)^2-t\beta^2}
- (2+\beta)(1-\beta).
\]
A derivative check shows $\partial_t F(t) < 0$ on $t\in(0,\tfrac12)$, so the worst case is at $t=\tfrac12$.  
Plugging in $t=\tfrac12$, we get
\[
\frac{4\beta-5\beta^2}{2\beta-\tfrac32\beta^2}
= \frac{4-5\beta}{2-\tfrac32\beta}
\;\ge\; (2+\beta)(1-\beta) = 2-\beta-\beta^2.
\]
This inequality is equivalent to
\[
0 \;\ge\; -\tfrac12\beta^2 + \tfrac32\beta^3
= \tfrac12\beta^2(-1+3\beta),
\]
which holds for $\beta \le \tfrac13$. Since $\beta_k \le \tfrac13$, condition \eqref{eq:cond3} follows.

\paragraph{Positivity of the potential function decrease.}

With $c_{k+1}=a_{k+1}$, the residual coefficient derived in \eqref{eq:VkVk1_form} is
$\widetilde c_{k+1}=\tfrac{a_k\theta}{2M\eta^2(1-\beta_k)^2}-a_{k+1}$, so
\[
V_k - V_{k+1}
\;\ge\; L^2\Bigl(\tfrac{a_k\theta}{2M\eta^2}-a_{k+1}\Bigr)\,\|x_{k+1}-y_k\|^2.
\]
Since $(1-\beta_k)^2\le1$ gives $\tfrac{a_k\theta}{2M\eta^2(1-\beta_k)^2}\ge\tfrac{a_k\theta}{2M\eta^2}$, it suffices to enforce the stronger condition $\tfrac{a_k\theta}{2M\eta^2}-a_{k+1}\ge0$. For this coefficient we use the schedule $a_k=\frac{b_k\eta}{2\beta_k}$ with $b_{k+1}=\frac{b_k}{1-\beta_k}$ (and constant $\eta$), which yields
\[
\frac{a_{k+1}}{a_k}
=\frac{b_{k+1}}{b_k}\,\frac{\beta_k}{\beta_{k+1}}
=\frac{\beta_k}{\beta_{k+1}(1-\beta_k)}.
\]
Therefore
\[
\frac{a_k\theta}{2M\eta^2}-a_{k+1}
\;\ge\;0
\qquad\Longleftrightarrow\qquad
2M\eta^2 \;\le\; \theta\,\frac{\beta_{k+1}(1-\beta_k)}{\beta_k}.
\]
With the specific choice $\beta_k=\frac{2}{k+6}$ we have
\[
\frac{\beta_{k+1}(1-\beta_k)}{\beta_k}
= \frac{\tfrac{2}{k+7}\left(1-\tfrac{2}{k+6}\right)}{\tfrac{2}{k+6}}
= \frac{k+4}{k+7},
\]
which takes its minimum at $k=0$ and hence a sufficient condition is
\[
2M\eta^2 \;\le\; \frac{4\theta}{7}.
\]

Since we know that $2M\eta^2 \;\le\; 1$, we can be sure that with the choice of $\theta = 2$, this condition is satisfied.

With this choice of $\theta$, we have $M = 6L^{2}$. Therefore, the admissible range of $\eta$ is $\eta \in \left(0, \frac{1}{2\sqrt{3}L}\right)$, as stated in the lemma.

\end{proof}

\subsection{Proof of Theorem~\ref{thm:main}}\label{appx:thm:main}
\begin{reptheorem}{thm:main}
Suppose $G$ is monotone and $L$-Lipschitz continuous. Consider the updates of \cref{eq:goma_triangle}
With the parameter choices
\[
\beta_k = \frac{2}{k+6}, 
\quad 
\eta_* \in \Bigl(0, \tfrac{1}{2\sqrt{3}L}\Bigr),
\]
\[
\begin{aligned}
a_k = c_k
&= \frac{b_0\,\eta_*}{80}\,(k+4)(k+5)(k+6),\\
b_k
&= \frac{b_0}{20}\,(k+4)(k+5).
\end{aligned}
\]
the potential function decrease from \cref{lemma:potential_eta} implies the bound
\begin{equation}\repeq{eq:simple_bound}
\|G(x_k)\|^2 
\;\le\; \frac{16/\eta_*^2 + 72L^2}{(k+6)^2}\,\|x_0 - x^\star\|^2 .
\end{equation}
Also, since the constant is smallest at the largest admissible step, as $\eta_*\to\frac{1}{2\sqrt{3}L}$ we get the bound:
\begin{align*}
\|G(x_k)\|^2 
&\le \frac{264L^2}{(k+6)^2}\,\|x_0-x^\star\|^2.
\end{align*}
\end{reptheorem}

\begin{proof}
    Let \( H_k := a_k \|G(x_k)\|^2 + b_k \langle G(x_k), x_k - x_0 \rangle \). Then using the Young's inequality $\langle a, b\rangle \leq \frac{\alpha}{2}\|a\|^2 + \frac{1}{2\alpha}\|b\|^2 $ with $\alpha=a_k$, we have
\begin{align}
H_k &= a_k \|G(x_k)\|^2 + b_k \langle G(x_k) - G(x^\star), x_k - x^\star \rangle + b_k \langle G(x_k), x^\star - x_0 \rangle \notag\\
&\geq a_k \|G(x_k)\|^2 - \frac{a_k}{2} \|G(x_k)\|^2 - \frac{b_k^2}{2 a_k} \|x_0 - x^\star\|^2 \notag \\
&= \frac{a_k}{2} \|G(x_k)\|^2 - \frac{b_k^2}{2 a_k} \|x_0 - x^\star\|^2. \label{eq:helper_thm1}
\end{align}

Finally, from \cref{eq:helper_thm1}, we have
\[
\frac{a_k}{2} \|G(x_k)\|^2 \leq H_k + \frac{b_k^2}{2 a_k} \|x_0 - x^\star\|^2,
\]
leading to
\begin{align}
   \frac{a_k}{2} \|G(x_k)\|^2 + c_k L^2 \|x_k - y_{k-1}\|^2 \leq V_k + \frac{b_k^2}{2 a_k} \|x_0 - x^\star\|^2.\label{eq:inter_vk_1} 
\end{align}

We replace the values of $a_k, b_k, c_k$. Using \( y_{-1} = x_0 \), the estimate in \cref{eq:inter_vk_1}, by induction, we can show that
\begin{align*}
&\frac{(k+4)(k+5)(k+6)\,\eta_*\,b_0}{160} \|G(x_k)\|^2 + \frac{L^2 (k+4)(k+5)(k+6)\,\eta_*\,b_0}{80} \|x_k - y_{k-1}\|^2 \\
&\leq V_k + \frac{b_k^2}{2 a_k} \|x_0 - x^\star\|^2 \\
&\leq V_0 + \,b_0\frac{(k+4)(k+5)}{10 \eta_* (k+6)}  \|x_0 - x^\star\|^2 \\
&=\frac{3b_0\eta_*}{2} \|G(x_0)\|^2 + \,b_0\frac{(k+4)(k+5)}{10 \eta_* (k+6)} \|x_0 - x^\star\|^2\\
&\leq \frac{3b_0\eta_*L^2}{2} \|x_0 - x^\star\|^2 + \,b_0\frac{(k+4)(k+5)}{10 \eta_* (k+6)}\|x_0 - x^\star\|^2
\end{align*}

Multiplying by $\dfrac{160}{b_0\eta_*(k+4)(k+5)(k+6)}$ and noting that $(k+4)(k+5)(k+6) \ge \tfrac{5}{9}(k+6)^3$ (with equality at $k=0$), so that $\frac{1}{(k+4)(k+5)(k+6)} \le \tfrac{9}{5}\cdot\frac{1}{(k+6)^3} \le \tfrac{9}{5}\cdot\tfrac16\cdot\frac{1}{(k+6)^2} = \tfrac{3}{10}\cdot\frac{1}{(k+6)^2}$, gives:

\begin{align*}
\|G(x_k)\|^2 
&\le 
\frac{240\,L^2}{(k+4)(k+5)(k+6)}\,\|x_0-x^\star\|^2
\;+\;
\frac{16}{\eta_*^2 (k+6)^2}\,\|x_0-x^\star\|^2\\
&\le \frac{16/\eta_*^2 + 72L^2}{(k+6)^2}\,\|x_0-x^\star\|^2.
\end{align*}

Also if we pick the largest admissible constant stepsize $\eta_* = \frac{1}{2\sqrt{3}L}$ we get the bound:

\begin{align*}
\|G(x_k)\|^2 
&\le \frac{264L^2}{(k+6)^2}\,\|x_0-x^\star\|^2.
\end{align*}

\end{proof}

\subsection{Proof of \cref{lemma:potential_gamma}}\label{appx:potential_gamma}

\begin{replemma}{lemma:potential_gamma}
Let $G$ be monotone and $L$-Lipschitz, and consider the iterates
\begin{equation}\repeq{eq:goma_square}
\begin{aligned}
y_k &= \beta_k x_0 + (1-\beta_k)x_k - \gamma_*\, G(y_{k-1}), \\
x_{k+1} &= \beta_k x_0 + (1-\beta_k)x_k - (1-\beta_k)\gamma_*\, G(y_k).
\end{aligned}
\end{equation}

The potential in \eqref{eq:potential} is non-increasing for the algorithm if the following conditions are satisfied.
\begin{align}
\gamma_{k+1}
&\;\le\;
\frac{\beta_{k+1}}{2M\,\beta_k\,\gamma_k}\frac{(1-\beta_k)^2}{(1-\beta_{k+1})}, 
\repeq{eq:cond1_gamma}
\end{align}
\begin{align}
    1 - 2M\gamma_k^2 - M\beta_k^2\gamma_k^2 &\;\ge\; 0,
\repeq{eq:cond2_gamma}
\end{align}
\begin{align}
\gamma_{k+1}
&\le
\frac{\beta_{k+1}}{2\,\beta_k(1-\beta_{k+1})}
\left[
\frac{4M\gamma_k^2 + \beta_k^4 - 2\beta_k^3 + 3\beta_k^2 - 2}{M(\beta_k^2+2)\gamma_k^2 - 1}
\right]\gamma_k.
\repeq{eq:cond3_gamma}
\end{align}
With $\widetilde c_{k+1}\ge0$ for $\theta=2$, these give:
$$V_k-V_{k+1}\ge \widetilde c_{k+1}L^2\|x_{k+1}-y_k\|^2.$$
\end{replemma}

\begin{proof}
The structure follows Lemma ~\ref{lemma:potential_eta}, with only the coupling changed.

\begin{align}
x_{k+1}-x_k &= \beta_k(x_0-x_k) - \eta_k G(y_k), \label{eq:h1_alt}\\
x_{k+1}-x_k &= \tfrac{\beta_k}{1-\beta_k}(x_0 - x_{k+1}) - \tfrac{\eta_k}{1-\beta_k}G(y_k),\label{eq:h2_alt}
\end{align}
and, using $\eta_k=(1-\beta_k)\gamma_k$,
\begin{equation}
x_{k+1}-y_k \;=\; -\gamma_k\bigl[(1-\beta_k)G(y_k)-G(y_{k-1})\bigr]. \label{eq:h3_alt}
\end{equation}

Monotonicity yields
\[
\langle G(x_{k+1}),x_{k+1}-x_k\rangle \;\ge\; \langle G(x_k),x_{k+1}-x_k\rangle.
\]
Substituting \eqref{eq:h1_alt}--\eqref{eq:h2_alt}, rearranging, and multiplying by $b_k/\beta_k$ with $b_{k+1}=\tfrac{b_k}{1-\beta_k}$ gives
\[
    b_k\langle G(x_k),x_k-x_0\rangle - b_{k+1}\langle G(x_{k+1}),x_{k+1}-x_0\rangle
    \;\ge\;
    b_{k+1}\eta_k\langle G(x_{k+1}),G(y_k)\rangle
    + \tfrac{b_k\eta_k}{\beta_k}\langle G(x_{k+1})-G(x_k),G(y_k)\rangle.
\]

\begin{align}
    V_k - V_{k+1}
 \ge&\; a_k\|G(x_k)\|^2 - a_{k+1}\|G(x_{k+1})\|^2
 + c_kL^2\|x_k-y_{k-1}\|^2 - c_{k+1}L^2\|x_{k+1}-y_k\|^2 \notag\\
 &+ b_{k+1}\eta_k\langle G(x_{k+1}),G(y_k)\rangle
 + \tfrac{b_k\eta_k}{\beta_k}\langle G(x_{k+1})-G(x_k),G(y_k)\rangle. \label{eq:inter-v_diff_alt}
\end{align}

From Lipschitzness and \eqref{eq:h3_alt}, for any $\theta>0$ we have
\begin{align}
    \|G(x_{k+1}) - G(y_k)\|^2 + \theta L^2\|x_{k+1}-y_k\|^2
    &\le (1+\theta)L^2\|x_{k+1}-y_k\|^2 \notag\\
    &= (1+\theta)L^2\gamma_k^2\|(1-\beta_k)G(y_k)-G(y_{k-1})\|^2\notag \\
    & \le 2(1+\theta)L^2\gamma_k^2\|G(y_k)-G(y_{k-1})\|^2 + 2\beta_k^2(1+\theta)L^2\gamma_k^2\|G(y_{k})\|^2. \label{eq:h4_alt_restate}
\end{align}

Where we using $\|u+v\|^2\le 2\|u\|^2+2\|v\|^2$ to get the last inequality,
\begin{align}
\|(1-\beta_k)G(y_k)-G(y_{k-1})\|^2
&\le 2\|G(y_k)-G(y_{k-1})\|^2 + 2\beta_k^2\|G(y_{k})\|^2. \label{eq:split_main}
\end{align}

Using Lipschitzness between $x_k$ and $y_{k-1}$ we have:
\begin{align}
\|G(y_k) - G(y_{k-1})\|^2 
&= \|G(y_k) - G(x_k) + G(x_k) - G(y_{k-1})\|^2 \notag \\
&\leq 2\|G(x_k) - G(y_k)\|^2 + 2\|G(x_k) - G(y_{k-1})\|^2 \notag\\
&\leq 2\|G(x_k)\|^2 - 4\langle G(x_k), G(y_k) \rangle + 2\|G(y_k)\|^2 + 2L^2\|x_k - y_{k-1}\|^2.\label{eq:Gyk-Gykp1_again}
\end{align}

Expanding the left-hand side square of \eqref{eq:h4_alt_restate},
\[
\|G(x_{k+1}) - G(y_k)\|^2
= \|G(x_{k+1})\|^2 -2\langle G(x_{k+1}),G(y_k)\rangle + \|G(y_k)\|^2,
\]
we obtain
\begin{align*}
    &\|G(x_{k+1})\|^2  - 2\langle G(x_{k+1}), G(y_k) \rangle + \|G(y_k)\|^2
    + \theta L^2\|x_{k+1}-y_k\|^2\\
    & \leq 4L^2(1+\theta)\,\gamma_k^2\Bigl(\|G(x_k)\|^2 - 2\langle G(x_k), G(y_k) \rangle +  \|G(y_k)\|^2\Bigr)
    \\
    &\quad + 4L^4(1+\theta)\,\gamma_k^2\,\|x_k - y_{k-1}\|^2
     +  2L^2(1+\theta)\,\gamma_k^2\,\beta_k^2\|G(y_{k})\|^2.
\end{align*}

Setting $M:=2L^2(1+\theta)$ and rearranging, we obtain the compact residual inequality
\begin{align}
&\|G(x_{k+1})\|^2
-2\langle G(x_{k+1}),G(y_k)\rangle
-2M\gamma_k^2\,\|G(x_k)\|^2
+4M\gamma_k^2\,\langle G(x_k),G(y_k)\rangle \notag\\
&\quad+\;\bigl(1 - 2M\gamma_k^2 - M\gamma_k^2\beta_k^2\bigr)\,\|G(y_k)\|^2
+\theta L^2\,\|x_{k+1}-y_k\|^2
-\;2L^2M\gamma_k^2\,\|x_k-y_{k-1}\|^2
\;\le\;0. \label{eq:residual_block_new}
\end{align}

Combine terms to isolate $G(x_{k+1})-G(x_k)$:
\begin{align*}
&\|G(x_{k+1})\|^2
-2\bigl(1-2M\gamma_k^2\bigr)\,\langle G(x_{k+1}),G(y_k)\rangle
-2M\gamma_k^2\,\|G(x_k)\|^2 \\
&\qquad
-\,4M\gamma_k^2\,\langle G(x_{k+1})-G(x_k),\,G(y_k)\rangle
+ \bigl(1 - 2M\gamma_k^2 - M\gamma_k^2\beta_k^2\bigr)\,\|G(y_k)\|^2\\
&\qquad + \theta L^2\,\|x_{k+1}-y_k\|^2
- 2L^2 M\gamma_k^2\,\|x_k-y_{k-1}\|^2
\;\le\;0.
\end{align*}

Multiply the inequality obtained above by $\dfrac{a_k}{2M\gamma_k^2}$ and add it to \cref{eq:inter-v_diff_alt}. We get
\begin{align}
V_k - V_{k+1}
\;&\ge\;
\underbrace{\Bigl(\frac{a_k}{2M\gamma_k^{2}}-a_{k+1}\Bigr)}_{S_k^{11}}
        \,\|G(x_{k+1})\|^{2}                                      \nonumber\\
&\quad
-\,2\underbrace{\Bigl(\frac{a_k (1 - 2M\gamma_k^2)}{2M\gamma_k^2} - \frac{b_{k+1}\eta_k}{2} \Bigr)}_{S_k^{12}}
        \,\langle G(x_{k+1}),\,G(y_k)\rangle                      \nonumber\\
&\quad
+\,\underbrace{\Bigl(\frac{a_k}{2M\gamma_k^{2}}\bigl[1 - 2M\gamma_k^2 - M\gamma_k^2\beta_k^2\bigr]\Bigr)}_{S_k^{22}}
        \,\|G(y_k)\|^{2}                                                                   \nonumber\\
&\quad
+\,\underbrace{\Bigl(-2a_k+\frac{b_k\eta_k}{\beta_k}\Bigr)}_{S_k^{23}}
        \,\langle G(x_{k+1}) - G(x_k),\,G(y_k)\rangle                          \nonumber\\[4pt]
&\quad
+\,L^{2}\underbrace{\Bigl(c_k-a_k\Bigr)}_{\widetilde c_k}
        \,\|x_k-y_{k-1}\|^{2}                                     \nonumber\\
&\quad
+\,L^{2}\underbrace{\Bigl(\frac{a_k\theta}{2M\gamma_k^{2}}-c_{k+1}\Bigr)}_{\widetilde c_{k+1}}
        \,\|x_{k+1}-y_k\|^{2}.                                
\label{eq:VkVk1_gamma_form}
\end{align}

With the choice of $a_k=\dfrac{b_k\eta_k}{2\beta_k}$, and $c_k = a_k$, (and $b_{k+1}=\dfrac{b_k}{1-\beta_k}$) we have $S_k^{23}=0$, and $\tilde c_k = 0$. 

\noindent
From \eqref{eq:VkVk1_gamma_form}, to guarantee $V_k-V_{k+1}\ge0$ it suffices to enforce
$\widetilde c_k\ge0$, $\widetilde c_{k+1}\ge0$ and
\[
S_k \;=\;
\begin{pmatrix}
S_k^{11} & S_k^{12}\\[2pt] S_k^{12} & S_k^{22}
\end{pmatrix}\succeq 0,
\quad\text{i.e.,}\quad
S_k^{11}\ge0,\;S_k^{22}\ge0,\;S_k^{11}S_k^{22}\ge (S_k^{12})^2.
\]
With $a_k=\frac{b_k\eta_k}{2\beta_k}$, $b_{k+1}=\frac{b_k}{1-\beta_k}$, and $\eta_k=(1-\beta_k)\gamma_k$,
the entries were
\begin{align*}
S^{11}_k
&= \frac{a_k}{2M\gamma_k^{2}}-a_{k+1}
= \frac{b_k}{4M\beta_k}\,\frac{(1-\beta_k)^2}{\eta_k}
   - \frac{b_k}{2(1-\beta_k)}\,\frac{\eta_{k+1}}{\beta_{k+1}},\\[4pt]
S^{12}_k
&= \frac{a_k (1 - 2M\gamma_k^2)}{2M\gamma_k^2} - \frac{b_{k+1}\eta_k}{2}
= \frac{b_k}{4M\beta_k}\!\left(\frac{(1-\beta_k)^2}{\eta_k} - 2M\eta_k\right)
  - \frac{b_k}{2(1-\beta_k)}\,\eta_k,\\[4pt]
S^{22}_k
&= \frac{a_k}{2M\gamma_k^{2}}\Bigl[1 - 2M\gamma_k^2 - M\gamma_k^2\beta_k^2\Bigr]
= \frac{b_k}{4M\beta_k\,\eta_k}\Bigl[(1-\beta_k)^2 - M\eta_k^2\,(2+\beta_k^2)\Bigr].
\end{align*}

\begin{align}
S_k^{11}\ge0
&\iff \frac{(1-\beta_k)^2}{4M\beta_k\,\eta_k}
     \;\ge\; \frac{\eta_{k+1}}{2\beta_{k+1}(1-\beta_k)}
\;\iff\;
\eta_{k+1}
\;\le\;
\frac{\beta_{k+1}}{2M\,\beta_k\,\eta_k}\,(1-\beta_k)^3.
\label{eq:cond1_eta}
\end{align}
\noindent
In the $\gamma$-variables (using $\eta_k=(1-\beta_k)\gamma_k$ and $\eta_{k+1}=(1-\beta_{k+1})\gamma_{k+1}$), this is equivalent to
\begin{align}
\gamma_{k+1}
\;\le\;
\frac{\beta_{k+1}}{2M\,\beta_k\,\gamma_k}\,
\frac{(1-\beta_k)^2}{(1-\beta_{k+1})}.
\label{eq:cond1_gamma_final}
\end{align}

\medskip
\begin{align}
S_k^{22}\ge0
&\iff (1-\beta_k)^2 - M\eta_k^2\,(2+\beta_k^2)\;\ge\;0
\;\iff\;
\eta_k \;\le\; \frac{1-\beta_k}{\sqrt{M\,(2+\beta_k^2)}}.
\label{eq:cond2_eta}
\end{align}
\noindent
In $\gamma$-variables, this condition becomes
\begin{align}
1 - 2M\gamma_k^2 - M\beta_k^2\gamma_k^2 \;\ge\; 0.
\label{eq:cond2_gamma_final}
\end{align}

\medskip
Assuming $S_k^{22}\ge0$ and after rearrangement, we get the bound
\[
S_k^{11}S_k^{22}\ge (S_k^{12})^2 \iff
\]

\begin{align}
\eta_{k+1}
&\le
\frac{\beta_{k+1}\,(1-\beta_k)}{2M\,\beta_k}
\left[
\frac{(1-\beta_k)^2}{\eta_k}
-\frac{\Bigl(\dfrac{(1-\beta_k)^2}{\eta_k}
- 2M\,\eta_k - \dfrac{2M\beta_k}{1-\beta_k}\,\eta_k\Bigr)^2}
           {\dfrac{(1-\beta_k)^2}{\eta_k} - M\,(2+\beta_k^2)\,\eta_k}
\right].
\label{eq:cond3_exact}
\end{align}

\noindent This simplifies to
\begin{align}
\eta_{k+1}
&\le
\frac{\beta_{k+1}}{2\beta_k}\,\frac{\eta_k}{1-\beta_k}\,
\frac{4M\eta_k^{2}-(1-\beta_k)^3\!\left(\beta_k^{3}-\beta_k^{2}+2\beta_k+2\right)}
{M(2+\beta_k^{2})\,\eta_k^{2}-(1-\beta_k)^{2}}.
\label{eq:cond3_eta}
\end{align}

\noindent In the $\gamma$-variables, $\gamma_k:=\eta_k/(1-\beta_k)$, \eqref{eq:cond3_eta} becomes
\begin{align}
\gamma_{k+1}
&\le
\frac{\beta_{k+1}}{2\,\beta_k(1-\beta_{k+1})}\,
\frac{4M\gamma_k^2+\beta_k^{4}-2\beta_k^{3}+3\beta_k^{2}-2}
{M(\beta_k^{2}+2)\gamma_k^{2}-1}\,
\gamma_k.
\label{eq:cond3_gamma_final}
\end{align}

\noindent
Conditions \eqref{eq:cond1_gamma_final}, \eqref{eq:cond2_gamma_final}, and \eqref{eq:cond3_gamma_final}, together with
$\widetilde c_{k+1}=\tfrac{a_k\theta}{2M\gamma_k^2}-c_{k+1}\ge0$,
complete the proof of the one-step potential function decrease.

Next, we show that conditions of \cref{eq:cond1_gamma}, \cref{eq:cond2_gamma}, \cref{eq:cond3_gamma} are satisfied, with constant step-size $\gamma_k = \gamma_* \in \Bigl(0, \tfrac{1}{3L}\sqrt{\tfrac{5}{26}}\Bigr)$, and the choice of $\beta_k = \frac{2}{k+6}$.

Let $\gamma_k\equiv\gamma\in\bigl(0,\tfrac{1}{\sqrt{3M}}\bigr)$ and $\beta_k=\frac{2}{k+6}$.

\paragraph{Condition \eqref{eq:cond2_gamma}.}
We need
\[
1-2M\gamma^2 - M\beta_k^2\gamma^2
\;=\;
1 - M\gamma^2\,(2+\beta_k^2)\;\ge\;0.
\]
For $\beta_k\in[0,1]$, the right-hand side is minimized at $\beta_k=1$, hence it suffices to require
\[
M\gamma^2 \;\le\; \frac{1}{3}.
\]
Thus \eqref{eq:cond2_gamma} holds for all $k$ whenever $M\gamma^2\le \tfrac{1}{3}$.

\paragraph{Condition \eqref{eq:cond1_gamma}.}

With $\gamma_{k+1}=\gamma_k=\gamma$, the condition \eqref{eq:cond1_gamma} reads
\[
(1-\beta_{k+1})\,\gamma \;\le\; \frac{\beta_{k+1}}{2M\,\beta_k\,\gamma}\,(1-\beta_k)^2
\quad\Longleftrightarrow\quad
2M\gamma^2 \;\le\; \frac{\beta_{k+1}(1-\beta_k)^2}{\beta_k(1-\beta_{k+1})}.
\]
For the schedule $\beta_k=\tfrac{2}{k+6}$ we have $1-\beta_k=\tfrac{k+4}{k+6}$,
$\beta_{k+1}=\tfrac{2}{k+7}$, and $1-\beta_{k+1}=\tfrac{k+5}{k+7}$. Hence
\[
\frac{\beta_{k+1}(1-\beta_k)^2}{\beta_k(1-\beta_{k+1})}
=\frac{\dfrac{2}{k+7}\left(\dfrac{k+4}{k+6}\right)^2}{\dfrac{2}{k+6}\left(\dfrac{k+5}{k+7}\right)}
=\frac{(k+4)^2}{(k+5)(k+6)}
\;<\;1,
\]
which is increasing in $k$, with minimum $\tfrac{8}{15}$ at $k=0$ and limit $1$ as $k\to\infty$. Therefore
\[
2M\gamma^2 \le \frac{8}{15}
\]
suffices for \eqref{eq:cond1_gamma} to hold for all $k$. This is the binding requirement on the step-size.

\paragraph{Condition \eqref{eq:cond3_gamma}.}
With $\gamma_{k+1}=\gamma_k=\gamma$ the inequality \eqref{eq:cond3_gamma} becomes
\[
1 \;\le\; 
\frac{\beta_{k+1}}{2\,\beta_k(1-\beta_{k+1})}\,
\frac{4M\gamma^2+\beta_k^{4}-2\beta_k^{3}+3\beta_k^{2}-2}
{M(\beta_k^{2}+2)\gamma^{2}-1}.
\]
Let $t:=M\gamma^2\in(0,\tfrac13]$, $\beta=\beta_k$, $\beta^+=\beta_{k+1}$, and $p(\beta):=\beta^4-2\beta^3+3\beta^2-2$. For the schedule $\beta_k=\tfrac{2}{k+6}$,
\[
P_k:=\frac{\beta^+}{2\beta(1-\beta^+)}=\frac{k+6}{2(k+5)}\in\Bigl(\tfrac12,\tfrac35\Bigr],
\qquad P_k\downarrow\tfrac12 .
\]
On $[0,\tfrac13]$ the polynomial $p$ is increasing and nonpositive; since $t\le\tfrac13$, both $4t+p(\beta)$ and $t(\beta^2+2)-1$ are strictly negative, so
\[
Q_k(t):=\frac{4t+p(\beta_k)}{t(\beta_k^2+2)-1}
=\frac{-p(\beta_k)-4t}{1-(\beta_k^2+2)t}\;>\;0 .
\]
Condition \eqref{eq:cond3_gamma} reads $P_kQ_k(t)\ge1$. Since the coefficient of $t$ is positive, this is equivalent to
\[
t\;\le\;\frac{P_k\bigl(-p(\beta_k)\bigr)-1}{4P_k-\beta_k^2-2},
\]
whose right-hand side is increasing in $k$ and hence minimized at $k=0$. There $\beta_0=\tfrac13$, $P_0=\tfrac35$, and $p(\tfrac13)=-\tfrac{140}{81}$, so
\[
t\;\le\;\frac{\tfrac35\cdot\tfrac{140}{81}-1}{\tfrac{2}{5}-\tfrac19}
=\frac{\tfrac{1}{27}}{\tfrac{13}{45}}=\frac{5}{39}.
\]
Thus condition \eqref{eq:cond3_gamma} holds for all $k$ whenever $t=M\gamma^2\le\tfrac{5}{39}$.

\paragraph{Positivity of the potential function decrease.}
We had
\[
V_k - V_{k+1}
\;\ge\;
L^2\Bigl(\tfrac{a_k\theta}{2M\gamma_k^2}-a_{k+1}\Bigr)\,\|x_{k+1}-y_k\|^2,
\]
Recall $a_k=\tfrac{b_k\eta_k}{2\beta_k}$, $b_{k+1}=\tfrac{b_k}{1-\beta_k}$, and
$\eta_k=(1-\beta_k)\gamma$ with constant $\gamma$. Then
\[
\frac{a_{k+1}}{a_k}
= \frac{b_{k+1}}{b_k}\cdot\frac{\eta_{k+1}}{\eta_k}\cdot\frac{\beta_k}{\beta_{k+1}}
= \frac{\beta_k(1-\beta_{k+1})}{\beta_{k+1}(1-\beta_k)^2}.
\]
Hence the residual coefficient is nonnegative iff
\[
\frac{\theta}{2M\gamma^2}
\;\ge\;
\frac{a_{k+1}}{a_k}
= \frac{\beta_k(1-\beta_{k+1})}{\beta_{k+1}(1-\beta_k)^2}.
\]
For the schedule $\beta_k=\tfrac{2}{k+6}$ we have
\[
\frac{a_{k+1}}{a_k}
= \frac{(k+5)(k+6)}{(k+4)^2},
\quad\text{whose supremum is } \frac{15}{8}\text{ at }k=0.
\]
Therefore it suffices to require
\[
2M\gamma^2 \;\le\; \frac{8}{15}\,\theta.
\]

In particular by choosing $\theta = 2$, this condition can be satisfied, giving $M  = 2L^2 (1+\theta) = 6L^2$. Collecting the step-size requirements with $\theta=2$, $M=6L^2$,
\[
\underbrace{2M\gamma^2\le\tfrac{8}{15}}_{\eqref{eq:cond1_gamma}},\qquad
\underbrace{M\gamma^2\le\tfrac13}_{\eqref{eq:cond2_gamma}},\qquad
\underbrace{M\gamma^2\le\tfrac{5}{39}}_{\eqref{eq:cond3_gamma}},\qquad
\underbrace{2M\gamma^2\le\tfrac{8}{15}\theta=\tfrac{16}{15}}_{\text{residual}},
\]
the binding constraint is \eqref{eq:cond3_gamma}, i.e.\ $\gamma^2\le\tfrac{5}{39M}=\tfrac{5}{234L^2}$. This yields the step-size range $\gamma_* \in \bigl(0, \tfrac{1}{3L}\sqrt{\tfrac{5}{26}}\bigr)$.

\end{proof}

\subsection{Proof of Theorem~\ref{thm:main_gamma}}\label{appx:main_gamma}

\begin{reptheorem}{thm:main_gamma}
Suppose $G$ is monotone and $L$-Lipschitz, and let $x^\star$ satisfy $G(x^\star)=0$. Consider the update of \eqref{eq:goma_square}. If the step-size $\gamma$ and anchoring coefficient $\beta_k$ satisfy the conditions of \cref{lemma:potential_gamma}, then the potential function of \cref{eq:potential} is decreasing. This implies the bound
\begin{equation}\repeq{eq:simple_bound_gamma}
\|G(x_k)\|^2 
\;\le\; \frac{16/\gamma_*^2 + 32L^2}{(k+4)^2}\,\|x_0 - x^\star\|^2 .
\end{equation}
Moreover, the constant $16/\gamma_*^2+32L^2$ is decreasing in $\gamma_*$, so it is smallest at the largest admissible step; as $\gamma_*\to\tfrac{1}{3L}\sqrt{\tfrac{5}{26}}$ it gives
\begin{equation}\repeq{eq:edge_bound_2}
\|G(x_k)\|^2 
\;\le\; \frac{780.8\,L^2}{(k+4)^2}\,\|x_0 - x^\star\|^2 .
\end{equation}
\end{reptheorem}

\begin{proof}
Similar to the proof of theorem~\ref{thm:main} define 
\[
H_k := a_k \|G(x_k)\|^2 + b_k \langle G(x_k), x_k - x_0 \rangle .
\]
By monotonicity and Young’s inequality $\langle a, b\rangle \leq \tfrac{\alpha}{2}\|a\|^2 + \tfrac{1}{2\alpha}\|b\|^2$ with $\alpha=a_k$, we obtain
\begin{align}
H_k &= a_k \|G(x_k)\|^2 + b_k \langle G(x_k) - G(x^\star), x_k - x^\star \rangle + b_k \langle G(x_k), x^\star - x_0 \rangle\nonumber \\
&\geq a_k \|G(x_k)\|^2 - \frac{a_k}{2} \|G(x_k)\|^2 - \frac{b_k^2}{2 a_k} \|x_0 - x^\star\|^2 \nonumber\\
&= \frac{a_k}{2} \|G(x_k)\|^2 - \frac{b_k^2}{2 a_k} \|x_0 - x^\star\|^2. \label{eq:helper_gamma}
\end{align}

From \cref{eq:helper_gamma}, it follows that
\[
\frac{a_k}{2} \|G(x_k)\|^2 \leq H_k + \frac{b_k^2}{2 a_k} \|x_0 - x^\star\|^2,
\]
which implies
\begin{align}
   \frac{a_k}{2} \|G(x_k)\|^2 + c_k L^2 \|x_k - y_{k-1}\|^2 \leq V_k + \frac{b_k^2}{2 a_k} \|x_0 - x^\star\|^2. \label{eq:inter_vk_gamma} 
\end{align}

Since $V_k$ is decreasing by \cref{lemma:potential_gamma}, we have $V_k \le V_0$.  
Substituting the explicit expressions for $a_k, b_k, c_k$ and using $y_{-1} = x_0$, inequality \cref{eq:inter_vk_gamma} yields
\begin{align*}
\frac{b_0\gamma_*}{160}(k+5)(k+4)^2\,\|G(x_k)\|^2
&\leq V_0 + \frac{b_k^2}{2 a_k}\|x_0-x^\star\|^2 \\
&= V_0 + \frac{b_0(k+5)}{10\,\gamma_*}\|x_0-x^\star\|^2 .
\end{align*}

Bounding the initial potential by $V_0 = a_0\|G(x_0)\|^2 = b_0\gamma_*\|G(x_0)\|^2 \le b_0\gamma_* L^2\|x_0-x^\star\|^2$ and multiplying both sides by $\dfrac{160}{b_0\gamma_*(k+5)(k+4)^2}$ gives
\[
\|G(x_k)\|^2
\;\le\;
\frac{160L^2}{(k+4)^2(k+5)}\|x_0-x^\star\|^2
+\frac{16}{\gamma_*^2(k+4)^2}\|x_0-x^\star\|^2 .
\]
Using $\dfrac{160L^2}{(k+4)^2(k+5)} \le \dfrac{32L^2}{(k+4)^2}$, which holds for every $k\ge 0$ (with equality at $k=0$), we obtain
\[
\|G(x_k)\|^2
\;\le\;
\frac{16/\gamma_*^2 + 32L^2}{(k+4)^2}\|x_0-x^\star\|^2 ,
\]
which establishes \cref{eq:simple_bound_gamma}.
Finally, taking $\gamma_* \to \tfrac{1}{3L}\sqrt{\tfrac{5}{26}}$ gives $16/\gamma_*^2 \to 748.8L^2$, hence
\[
\|G(x_k)\|^2 \;\le\; \frac{780.8\,L^2}{(k+4)^2}\|x_0-x^\star\|^2.
\]

\end{proof}

\subsection{Analysis of the Bilinear Game}
\label{app:bilinear}

\textbf{Example.}
Let $f(x,y)=Lxy$, with operator $G(x,y)=(Ly,-Lx)$ and solution $x^\star=(0,0)$.
Fix $a,b>0$ with $b>a$, constant step size $\eta_\star=\tfrac{1}{\theta L}$
($\theta>0$), and anchoring coefficient $\beta_k=\tfrac{a}{k+b}$. For any
$(p,q)\in\mathbb{R}^2$, initialize~\eqref{eq:goma_triangle} at
$x_0=\tfrac{a\theta}{b-a}(-q,p)$. Then
\begin{equation}
  \|G(x_k)\|^2
  = \frac{(a\theta L)^2}{(k+b-a)^2}\,\|x_0-x^\star\|^2\,\bigl(1+o(1)\bigr).\label{eq:bilinear-rate}
\end{equation}

\begin{proof}
The bilinear operator satisfies the two elementary identities
\begin{equation}
  \|G(v)\|=L\,\|v\|,
  \qquad
  G^2=-L^2 I,
  \qquad v\in\mathbb{R}^2 ,
  \tag{P}\label{eq:bilinear-props}
\end{equation}
both immediate from $G(x,y)=(Ly,-Lx)$.

Multiplying the two lines of~\eqref{eq:goma_triangle} by $(k+b)$ and using
$(k+b)\beta_k=a$, $(k+b)(1-\beta_k)=k+b-a$, then setting the scaled variables
$\tilde x_k:=(k+b-a)x_k$ and $\tilde y_k:=(k+b)y_k$, linearity of $G$ gives the
exact system
\begin{align}
  \tilde y_k
  &= \tilde x_k + a x_0
     - \eta_\star\,\tfrac{k+b-a}{k+b-1}\,G(\tilde y_{k-1}),\label{eq:rescaled-a}\\
  \tfrac{k+b}{k+b-a+1}\,\tilde x_{k+1}
  &= \tilde x_k + a x_0 - \eta_\star\,G(\tilde y_k).\label{eq:rescaled-b}
\end{align}
Both prefactors tend to $1$ (and equal $1$ for all $k$ iff $a=1$), so the
limiting system is time-invariant with fixed point
$\tilde x_k=\tilde y_k\equiv\tilde x$ given by $\eta_\star G(\tilde x)=a x_0$.

Hence $G(\tilde x)=a\theta L\,x_0$, and taking norms via
$\|G(\tilde x)\|=L\|\tilde x\|$ from~\eqref{eq:bilinear-props},
\begin{equation}
  \|\tilde x\| = a\theta\,\|x_0\|.\label{eq:fp-magnitude}
\end{equation}
It remains to show $\tilde x_k\to\tilde x$. With $d_k:=\tilde x_k-\tilde x$ and
$A:=\eta_\star G$ (so $A^2=-g^2 I$, $g:=\eta_\star L=1/\theta$,
by~\eqref{eq:bilinear-props}), the deviation obeys $d_{k+1}=(I-2A)d_k+A\,d_{k-1}$ (subtracting the fixed
point from the limiting system gives $e_k=d_k-Ae_{k-1}$ and
$d_{k+1}=d_k-Ae_k$ with $e_k:=\tilde y_k-\tilde x$; eliminate $e_k$ using
$A^{-1}=-g^{-2}A$).. Since $\{I,A\}$ commute and $A^2=-g^2I$, the
characteristic roots have $|\lambda_\pm|^2=r_\pm:=\tfrac{1\pm\sqrt{1-4g^2}}{2}$.
Any admissible step $\eta_\star\le\tfrac{1}{2\sqrt3 L}$ gives $4g^2\le\tfrac13<1$,
hence $r_\pm\in(0,1)$ and $|\lambda_\pm|<1$: the homogeneous map is a
contraction. As the prefactors in~\eqref{eq:rescaled-a}--\eqref{eq:rescaled-b}
differ from $1$ by $\mathcal{O}(1/k)\to0$, it stays a contraction for large $k$, so
$d_k\to0$.

Finally, from $x_k=\tilde x_k/(k+b-a)$, $\tilde x_k\to\tilde x$,
$\|G(x_k)\|=L\|x_k\|$, and~\eqref{eq:fp-magnitude},
\[
  \|G(x_k)\|^2
  = \frac{L^2\|\tilde x_k\|^2}{(k+b-a)^2}
  \;\longrightarrow\;
  \frac{(a\theta L)^2}{(k+b-a)^2}\,\|x_0\|^2 ,
\]
which is~\eqref{eq:bilinear-rate} since $x^\star=0$.
\end{proof}

\paragraph{Comparison with the general guarantee.}
With $a=2$, $b=6$ and the largest admissible step $\eta_\star=\tfrac{1}{2\sqrt3 L}$
($\theta=2\sqrt3$), we have $k+b-a=k+4$ and $(a\theta L)^2=48L^2$, so
$\|G(x_k)\|^2=\tfrac{48L^2}{(k+4)^2}\|x_0-x^\star\|^2(1+o(1))$. The universal
bound~\eqref{eq:edge_bound1}, with $1/\eta_\star^2=\theta^2 L^2$, equals
$\tfrac{264L^2}{(k+6)^2}$. Both decay as $L^2/k^2$, and the ratio of leading
constants is independent of $k$:
\[
  \frac{16\theta^2+72}{(a\theta)^2}\bigg|_{a=2}
  = 4+\frac{18}{\theta^2}
  \;\xrightarrow{\ \theta=2\sqrt3\ }\;
  \frac{11}{2}= 5.5 .\]
So on this
hard instance the universal analysis is tight up to a constant factor at
most~$5.5$.

\section{Proof of \cref{sec:stochastic}}

\subsection{Setup}
\label{appx:stoch_setup}
 
Throughout this section, $G$ is monotone and $L$-Lipschitz with
$G(x^\star)=0$, and $\widehat G(x,\xi)$ satisfies
Assumption~\ref{ass:bounded_v}, where, as argued after
Assumption~\ref{ass:bounded_v}, we may take $\kappa\ge1$ without loss
of generality. We write $d_0:=\|x_0-x^\star\|$ and let
$\mathcal{F}_k:=\sigma(x_0,\xi_0,\dots,\xi_{k-1})$ denote the natural
filtration of the algorithm, so that $x_k,y_k\in\mathcal{F}_k$. We
consider the stochastic updates~\eqref{eq:sto_updates} with the
schedules
\begin{equation}
\label{eq:stoch_schedules}
\beta_k=\frac{1}{k+2},
\qquad
\eta_k=\frac{1}{L\sqrt{\kappa}\,(k+2)^{3/4}},
\end{equation}
for which
\begin{equation}
\label{eq:eta_facts}
L^2\eta_k^2=\frac{1}{\kappa(k+2)^{3/2}}\le\frac{1}{(k+2)^{3/2}},
\qquad
\sum_{k\ge0}L^2\eta_k^2
\le\frac1\kappa\sum_{m\ge2}m^{-3/2}\le\frac2\kappa\le2 ,
\end{equation}
together with the deterministic reference
trajectory~$(\bar x_k,\bar y_k)$ defined in~\eqref{eq:reference_traj}.
Since
$\|G(x_N)\|^2\le2\|G(\bar x_N)\|^2+2L^2\|x_N-\bar x_N\|^2$ by
$L$-Lipschitzness, Theorem~\ref{thm:last_iter_stoch_rho0} follows from
Lemma~\ref{lem:det-reference} (proved in
\cref{appx:lem_det_reference}) and Lemma~\ref{lem:stoch-stability}
(proved in \cref{appx:lem_stoch_stability}), both stated in
Section~\ref{sec:stochastic}.

 
\subsection{Proof of Lemma~\ref{lem:det-reference} (deterministic reference bound)}
\label{appx:lem_det_reference}

\begin{replemma}{lem:det-reference}
Let $G$ be monotone and $L$-Lipschitz with $G(x^\star)=0$, and let
$(\bar x_k,\bar y_k)$ be given by~\eqref{eq:reference_traj} with
$\beta_k=\frac{1}{k+2}$, $\eta_k=\frac{1}{L\sqrt{\kappa}(k+2)^{3/4}}$,
and $\kappa\ge1$. Then for all $N\ge1$ and all $k\ge0$,
\begin{align}
\|G(\bar x_N)\|^2
& \le 
\frac{33\,L^2\kappa\,\|x_0-x^\star\|^2}{\sqrt{N+1}},
\repeq{eq:lemA_xbound}\\
\|G(\bar y_k)\|^2
&\le \frac{94\,L^2\kappa\,\|x_0-x^\star\|^2}{\sqrt{k+1}}.
\repeq{eq:lemA_ybound}
\end{align}
\end{replemma}
 
\begin{proof}
\textbf{Step 1: Potential function and one-step decrease.}
Consider the two-term anchored potential function evaluated along the
reference trajectory, namely the analogue of the deterministic
potential~\eqref{eq:potential} in which the extrapolation term
$c_kL^2\|x_k-y_{k-1}\|^2$ is dropped, since the simplified variant
sets $\gamma_k=0$ and does not reuse past gradients:
\begin{equation}
\label{eq:det_potential}
H_k:=a_k\|G(\bar x_k)\|^2+b_k\langle G(\bar x_k),\bar x_k-x_0\rangle,
\qquad
a_{k+1}=\frac{b_k\eta_k}{2\beta_k(1-\beta_k)},
\qquad
b_{k+1}=\frac{b_k}{1-\beta_k}.
\end{equation}
The first term is the residual we control; the second is an anchoring
cross-term coupling the gradient with the displacement from the
reference point $x_0$.
For the schedules~\eqref{eq:stoch_schedules}, normalizing $b_0=1$,
\begin{equation}
\label{eq:ab_explicit}
b_k=k+1,
\qquad
a_k=\frac{(k+1)^{5/4}}{2L\sqrt{\kappa}}\quad(k\ge1),
\qquad\text{and}\qquad
b_1\eta_0=a_1 .
\end{equation}
From~\eqref{eq:reference_traj} we have the identities
\[
\bar x_{k+1}-\bar x_k=\beta_k(x_0-\bar x_k)-\eta_kG(\bar y_k),
\qquad
\bar x_{k+1}-\bar x_k
=\tfrac{\beta_k}{1-\beta_k}(x_0-\bar x_{k+1})
-\tfrac{\eta_k}{1-\beta_k}G(\bar y_k).
\]
Monotonicity of $G$ gives
\[
\langle G(\bar x_{k+1})-G(\bar x_k),\,\bar x_{k+1}-\bar x_k\rangle\ge0,
\qquad\text{i.e.}\qquad
\langle G(\bar x_{k+1}),\,\bar x_{k+1}-\bar x_k\rangle
\ge\langle G(\bar x_k),\,\bar x_{k+1}-\bar x_k\rangle .
\]
We substitute the second identity on the left-hand side and the first
identity on the right-hand side:
\[
\Big\langle G(\bar x_{k+1}),\,
\tfrac{\beta_k}{1-\beta_k}(x_0-\bar x_{k+1})
-\tfrac{\eta_k}{1-\beta_k}G(\bar y_k)\Big\rangle
\ge
\big\langle G(\bar x_k),\,\beta_k(x_0-\bar x_k)-\eta_kG(\bar y_k)\big\rangle .
\]
Expanding both inner products gives
\[
\tfrac{\beta_k}{1-\beta_k}\langle G(\bar x_{k+1}),x_0-\bar x_{k+1}\rangle
-\tfrac{\eta_k}{1-\beta_k}\langle G(\bar x_{k+1}),G(\bar y_k)\rangle
\ge
\beta_k\langle G(\bar x_k),x_0-\bar x_k\rangle
-\eta_k\langle G(\bar x_k),G(\bar y_k)\rangle .
\]
Moving the anchoring cross-terms (the $x_0-\bar x$ inner products) to the
left-hand side and the $G(\bar y_k)$ cross-terms to the right-hand side,
\[
\tfrac{\beta_k}{1-\beta_k}\langle G(\bar x_{k+1}),x_0-\bar x_{k+1}\rangle
-\beta_k\langle G(\bar x_k),x_0-\bar x_k\rangle
\ge
\tfrac{\eta_k}{1-\beta_k}\langle G(\bar x_{k+1}),G(\bar y_k)\rangle
-\eta_k\langle G(\bar x_k),G(\bar y_k)\rangle .
\]
Multiplying by $b_k/\beta_k$, using $b_{k+1}=\tfrac{b_k}{1-\beta_k}$ on the
$G(\bar x_{k+1})$ term and $\langle G(\bar x),\,x_0-\bar x\rangle
=-\langle G(\bar x),\,\bar x-x_0\rangle$ on both anchoring terms, yields
\begin{equation}
\label{eq:det_anchor_ineq}
b_k\langle G(\bar x_k),\bar x_k-x_0\rangle
-b_{k+1}\langle G(\bar x_{k+1}),\bar x_{k+1}-x_0\rangle
\ge
\tfrac{b_k\eta_k}{\beta_k(1-\beta_k)}
\langle G(\bar x_{k+1}),G(\bar y_k)\rangle
-\tfrac{b_k\eta_k}{\beta_k}\langle G(\bar x_k),G(\bar y_k)\rangle .
\end{equation}
By $L$-Lipschitzness and $\bar x_{k+1}-\bar y_k=-\eta_kG(\bar y_k)$,
\begin{equation}
\label{eq:det_lip}
\|G(\bar x_{k+1})\|^2
-2\langle G(\bar x_{k+1}),G(\bar y_k)\rangle
+\big(1-L^2\eta_k^2\big)\|G(\bar y_k)\|^2\le0 .
\end{equation}
Adding the $a_k$-terms to~\eqref{eq:det_anchor_ineq} and then adding
$a_{k+1}\times$\eqref{eq:det_lip}, the $\|G(\bar x_{k+1})\|^2$ terms
and the $\langle G(\bar x_{k+1}),G(\bar y_k)\rangle$ terms cancel
(using $\tfrac{b_k\eta_k}{\beta_k(1-\beta_k)}=2a_{k+1}$), leaving the
one-step inequality
\begin{equation}
\label{eq:det_onestep}
H_k-H_{k+1}
\;\ge\;
a_k\|G(\bar x_k)\|^2
-2a_{k+1}(1-\beta_k)\langle G(\bar x_k),G(\bar y_k)\rangle
+a_{k+1}\big(1-L^2\eta_k^2\big)\|G(\bar y_k)\|^2 .
\end{equation}
The right-hand side of~\eqref{eq:det_onestep} is a quadratic form in
$\big(\|G(\bar x_k)\|,\|G(\bar y_k)\|\big)$ and is nonnegative
provided
\begin{equation}
\label{eq:psd_condition}
a_ka_{k+1}\big(1-L^2\eta_k^2\big)\ge a_{k+1}^2(1-\beta_k)^2,
\qquad\text{i.e.}\qquad
1-L^2\eta_k^2\ge\Big(\frac{k+1}{k+2}\Big)^{3/4},
\end{equation}
using $a_{k+1}/a_k=\big((k+2)/(k+1)\big)^{5/4}$ from
\eqref{eq:ab_explicit}. By the tangent bound
$\big(1-\frac{1}{k+2}\big)^{3/4}\le1-\frac{3}{4(k+2)}$ (concavity of
$t\mapsto t^{3/4}$) and $L^2\eta_k^2\le(k+2)^{-3/2}$ from
\eqref{eq:eta_facts}, condition~\eqref{eq:psd_condition} holds as soon
as $(k+2)^{-3/2}\le\frac{3}{4(k+2)}$, i.e.\
$(k+2)^{1/2}\ge\frac43$, which is true for every $k\ge0$. Hence
$H_{k+1}\le H_k$ for all $k\ge1$, so
\begin{equation}
\label{eq:HN_le_H1}
H_N\le H_1\qquad(N\ge1).
\end{equation}
(Starting the telescoping at $k=1$ sidesteps the $a_0=0$ boundary
step entirely.)
 
\textbf{Step 2: lower bound on $H_N$.}
Since $G(x^\star)=0$, monotonicity gives
$\langle G(\bar x_N),\bar x_N-x^\star\rangle\ge0$, so writing
$\bar x_N-x_0=(\bar x_N-x^\star)+(x^\star-x_0)$ and applying Cauchy-Schwarz and then Young's
inequalities,
\begin{equation}
\label{eq:HN_lower}
H_N
\ge a_N\|G(\bar x_N)\|^2-b_N\|G(\bar x_N)\|\,d_0
\ge\frac{a_N}{2}\|G(\bar x_N)\|^2-\frac{b_N^2}{2a_N}d_0^2 .
\end{equation}
Combining~\eqref{eq:HN_le_H1} and~\eqref{eq:HN_lower},
\begin{equation}
\label{eq:GxN_master}
\|G(\bar x_N)\|^2
\le\frac{2H_1}{a_N}+\Big(\frac{b_N}{a_N}\Big)^2d_0^2,
\qquad
\Big(\frac{b_N}{a_N}\Big)^2=\frac{4L^2\kappa}{\sqrt{N+1}} .
\end{equation}
 
\textbf{Step 3: the term $H_1/a_N$ is of lower order.}
Since $\beta_0=\frac12$ and $\bar x_0=x_0$, we have $\bar y_0=x_0$ and
$\bar x_1=x_0-\eta_0G(x_0)$. As $L\eta_0\le1$
by~\eqref{eq:eta_facts}, Lipschitzness gives
$\|G(\bar x_1)\|\le(1+L\eta_0)\|G(x_0)\|\le2Ld_0$, and
\[
\langle G(\bar x_1),\bar x_1-x_0\rangle
=-\eta_0\langle G(\bar x_1),G(x_0)\rangle
\le\eta_0\|G(\bar x_1)\|\,\|G(x_0)\|
\le2\eta_0L^2d_0^2 .
\]
With $b_1\eta_0=a_1$ this yields
$H_1\le4a_1L^2d_0^2+2b_1\eta_0L^2d_0^2=6a_1L^2d_0^2$, and since
$a_1/a_N=\big(2/(N+1)\big)^{5/4}$,
\begin{equation}
\label{eq:H1_over_aN}
\frac{2H_1}{a_N}
\le\frac{12\cdot2^{5/4}L^2d_0^2}{(N+1)^{5/4}}\,,
\end{equation}
 Plugging
\eqref{eq:H1_over_aN} into~\eqref{eq:GxN_master} with $\kappa \geq 1$ proves
\eqref{eq:lemA_xbound}.
 
\textbf{Step 4: boundedness of the reference iterates.}
Since $\bar x_{k+1}=\bar y_k-\eta_kG(\bar y_k)$, monotonicity and
Lipschitzness (with $G(x^\star)=0$) give
\[
\|\bar x_{k+1}-x^\star\|^2
=\|\bar y_k-x^\star\|^2
-2\eta_k\langle G(\bar y_k),\bar y_k-x^\star\rangle
+\eta_k^2\|G(\bar y_k)\|^2
\le\big(1+L^2\eta_k^2\big)\|\bar y_k-x^\star\|^2 .
\]
Together with
$\|\bar y_k-x^\star\|\le\beta_kd_0+(1-\beta_k)\|\bar x_k-x^\star\|$,
this convex combination keeps $\|\bar y_k-x^\star\|$ no larger than
$\max\big(d_0,\|\bar x_k-x^\star\|\big)$, so each iteration inflates the
distance to $x^\star$ by at most the factor $(1+L^2\eta_k^2)^{1/2}$.
Accumulating these factors from $\bar x_0=x_0$, and using $1+t\le e^t$
with $\sum_{j\ge0}L^2\eta_j^2\le2$ from~\eqref{eq:eta_facts}, gives, for
all $k$,
\begin{equation}
\label{eq:bounded_iterates}
\|\bar x_k-x^\star\|
\le\prod_{j\ge0}\big(1+L^2\eta_j^2\big)^{1/2}d_0
\le e^{\frac12\sum_jL^2\eta_j^2}d_0\le e\,d_0,
\qquad
\|\bar y_k-x^\star\|\le e\,d_0,
\end{equation}
and consequently
\begin{equation}
\label{eq:y_minus_x}
\|\bar y_k-\bar x_k\|=\beta_k\|x_0-\bar x_k\|
\le(1+e)\beta_k d_0 .
\end{equation}
 
\textbf{Step 5: residual bound at $\bar y_k$.}
For $k\ge1$, Lipschitzness, the bound~\eqref{eq:lemA_xbound} applied
with $N=k$, and~\eqref{eq:y_minus_x} give
\[
\|G(\bar y_k)\|^2
\le2\|G(\bar x_k)\|^2+2L^2\|\bar y_k-\bar x_k\|^2
\le\frac{66L^2\kappa d_0^2}{\sqrt{k+1}}
+\frac{2(1+e)^2L^2d_0^2}{(k+2)^2}
\le\frac{\big(66+2(1+e)^2\big)L^2\kappa d_0^2}
{\sqrt{k+1}},
\]
since $(k+2)^2\ge\sqrt{k+1}$ and $\kappa\ge1$. For $k=0$,
$\bar y_0=x_0$ and $\|G(x_0)\|^2\le L^2d_0^2$. This proves
\eqref{eq:lemA_ybound} with
$66+2(1+e)^2\le94$.
\end{proof}
 
\subsection{Proof of Lemma~\ref{lem:stoch-stability} (stochastic stability)}
\label{appx:lem_stoch_stability}

\begin{replemma}{lem:stoch-stability}
In the setting of Lemma~\ref{lem:det-reference}, let $\widehat G$
satisfy Assumption~\ref{ass:bounded_v}, let $(x_k)$ be given
by~\eqref{eq:sto_updates}, and set $e_k:=x_k-\bar x_k$. Then for all $N\ge0$,
\begin{equation}\repeq{eq:lemB_precise}
\mathbb{E}\,\|e_N\|^2
\;\le\;
\frac{1}{\sqrt{N+1}}
\left(\frac{4\,\sigma^2}{L^2\kappa}
+ 752\,\kappa\,\|x_0-x^\star\|^2\right).
\end{equation}
\end{replemma}

\begin{proof}
\textbf{Step 1: error recursion.}
Let $n_k:=\widehat G(y_k,\xi_k)-G(y_k)$, so
$\mathbb{E}[n_k\mid\mathcal{F}_k]=0$ and, by
Assumption~\ref{ass:bounded_v} and unbiasedness,
\begin{equation}
\label{eq:noise_variance}
\mathbb{E}\big[\|n_k\|^2\mid\mathcal{F}_k\big]
=\mathbb{E}\big[\|\widehat G(y_k,\xi_k)\|^2\mid\mathcal{F}_k\big]
-\|G(y_k)\|^2
\le\sigma^2+\kappa\|G(y_k)\|^2 .
\end{equation}
Subtracting the reference update~\eqref{eq:reference_traj} from the
stochastic update~\eqref{eq:sto_updates}, and using
$y_k-\bar y_k=(1-\beta_k)e_k$,
\begin{equation}
\label{eq:error_recursion}
e_{k+1}
=(1-\beta_k)e_k-\eta_k\big(G(y_k)-G(\bar y_k)\big)-\eta_kn_k .
\end{equation}
The first two terms on the right-hand side are
$\mathcal{F}_k$-measurable, so taking conditional expectations and
using $\mathbb{E}[n_k\mid\mathcal{F}_k]=0$,
\begin{equation}
\label{eq:cond_expansion}
\mathbb{E}\big[\|e_{k+1}\|^2\mid\mathcal{F}_k\big]
=\big\|(1-\beta_k)e_k-\eta_k\big(G(y_k)-G(\bar y_k)\big)\big\|^2
+\eta_k^2\,\mathbb{E}\big[\|n_k\|^2\mid\mathcal{F}_k\big] .
\end{equation}
 
\textbf{Step 2: the drift term is contractive.}
By monotonicity,
$\langle G(y_k)-G(\bar y_k),y_k-\bar y_k\rangle\ge0$, and since
$y_k-\bar y_k=(1-\beta_k)e_k$ with $\beta_k<1$,
\begin{equation}
\label{eq:mono_cross}
\langle G(y_k)-G(\bar y_k),e_k\rangle\ge0 .
\end{equation}
By Lipschitzness,
$\|G(y_k)-G(\bar y_k)\|\le L(1-\beta_k)\|e_k\|$. Expanding the square
and using~\eqref{eq:mono_cross} to drop the cross term,
\begin{equation}
\label{eq:drift_contraction}
\big\|(1-\beta_k)e_k-\eta_k\big(G(y_k)-G(\bar y_k)\big)\big\|^2
\le(1-\beta_k)^2\big(1+L^2\eta_k^2\big)\|e_k\|^2 .
\end{equation}
Moreover, again by Lipschitzness,
\begin{equation}
\label{eq:Gyk_split}
\|G(y_k)\|^2\le2\|G(\bar y_k)\|^2+2L^2(1-\beta_k)^2\|e_k\|^2 .
\end{equation}
 
\textbf{Step 3: one-step inequality.}
Combining \eqref{eq:cond_expansion}--\eqref{eq:Gyk_split} with
\eqref{eq:noise_variance},
\begin{equation}
\label{eq:one_step_stab}
\mathbb{E}\big[\|e_{k+1}\|^2\mid\mathcal{F}_k\big]
\le q_k\|e_k\|^2+\eta_k^2\sigma^2+2\kappa\eta_k^2\|G(\bar y_k)\|^2,
\qquad
q_k:=(1-\beta_k)^2\Big(1+(1+2\kappa)L^2\eta_k^2\Big) .
\end{equation}
Since $\kappa\ge1$ implies $(1+2\kappa)/\kappa\le3$, the schedule
\eqref{eq:stoch_schedules} gives
$(1+2\kappa)L^2\eta_k^2\le3(k+2)^{-3/2}$, hence, with $m:=k+2\ge2$,
\begin{equation}
\label{eq:qk_bound}
q_k\le\Big(1-\frac1m\Big)^2\Big(1+\frac{3}{m^{3/2}}\Big)
\le1-\frac{3}{4m} .
\end{equation}
The second inequality in~\eqref{eq:qk_bound} is elementary: it is
equivalent to $g(m):=3m^{-1/2}(1-1/m)^2+1/m\le\frac54$, which holds
for $m\ge9$ since then $3m^{-1/2}\le1$ and $1/m\le\frac19$, and is
verified directly for $m\in\{2,\dots,8\}$ (its maximum there is
$g(3)\le1.11$).
 
\textbf{Step 4: solving the recursion.}
Let $r_k:=\mathbb{E}\|e_k\|^2$, so $r_0=0$. Taking total expectations
in~\eqref{eq:one_step_stab}, with
$\eta_k^2\sigma^2=\sigma^2/\big(L^2\kappa(k+2)^{3/2}\big)$ and, by
\eqref{eq:lemA_ybound} and $\sqrt{k+1}\ge1$,
\[
2\kappa\eta_k^2\|G(\bar y_k)\|^2
\le\frac{2\kappa}{L^2\kappa(k+2)^{3/2}}\cdot
\frac{94L^2\kappa d_0^2}{\sqrt{k+1}}
\le\frac{188\kappa d_0^2}{(k+2)^{3/2}},
\]
we obtain
\begin{equation}
\label{eq:r_recursion}
r_{k+1}\le\Big(1-\frac{3}{4(k+2)}\Big)r_k+\frac{S}{(k+2)^{3/2}},
\qquad
S:=\frac{\sigma^2}{L^2\kappa}+188\kappa d_0^2 .
\end{equation}
We claim that~\eqref{eq:r_recursion} with $r_0=0$ implies
$r_k\le4S/\sqrt{k+2}$ for all $k\ge0$. The base case is immediate; if
$r_k\le4S\,m^{-1/2}$ with $m=k+2$, then
\[
r_{k+1}
\le\Big(1-\frac{3}{4m}\Big)\frac{4S}{\sqrt m}+\frac{S}{m^{3/2}}
=\frac{4S}{\sqrt m}-\frac{2S}{m^{3/2}}
\le\frac{4S}{\sqrt{m+1}},
\]
where the last step uses
\[
\frac{1}{\sqrt m}-\frac{1}{\sqrt{m+1}}
=\frac{1}{\sqrt m\,\sqrt{m+1}\,(\sqrt m+\sqrt{m+1})}
\le\frac{1}{2m^{3/2}} .
\]
This proves
$r_N\le\frac{4S}{\sqrt{N+1}}$, which is exactly
\eqref{eq:lemB_precise}.
\end{proof}
 
\subsection{Proof of Theorem~\ref{thm:last_iter_stoch_rho0}}
\label{appx:last_iter_stoch_rho0}
\begin{reptheorem}{thm:last_iter_stoch_rho0}
Let \(G:\mathbb{R}^d\to\mathbb{R}^d\) be monotone and \(L\)-Lipschitz and $\widehat G(x,\xi)$ be a stochastic oracle following Assumption \ref{ass:bounded_v}. Then for the updates described in~\eqref{eq:sto_updates} with $\beta_k=\frac{1}{k+2}$ and $\eta_k=\frac{1}{L\sqrt{\kappa} (k+2)^{3/4}}$, we have for all $N\ge 0$,
\begin{align*}
\mathbb{E}\|G(x_N)\|^2 \;\le\; \frac{ 1570 L^2 \kappa \|x_0-x^\star\|^2}{\sqrt{N+1}} + \frac{8\,\sigma^2 }{\kappa\sqrt{N+1}}.
\end{align*}    
\end{reptheorem}
 
\begin{proof}[Proof of Theorem~\ref{thm:last_iter_stoch_rho0}]
For $N=0$ the bound holds trivially:
$\|G(x_0)\|^2\le L^2d_0^2\le 1570L^2\kappa d_0^2$ since $\kappa\ge1$. Let $N\ge1$. By $L$-Lipschitzness of $G$,
\[
\|G(x_N)\|^2\le2\|G(\bar x_N)\|^2+2L^2\|x_N-\bar x_N\|^2 .
\]
Taking expectations and applying Lemma~\ref{lem:det-reference} and
Lemma~\ref{lem:stoch-stability},
\[
\mathbb{E}\,\|G(x_N)\|^2
\le\frac{66L^2\kappa d_0^2}{\sqrt{N+1}}
+\frac{2L^2}{\sqrt{N+1}}
\left(\frac{4\sigma^2}{L^2\kappa}+752\kappa d_0^2\right)
=\frac{1570L^2\kappa d_0^2}{\sqrt{N+1}}
+\frac{8\sigma^2}{\kappa\sqrt{N+1}} .
\]
\end{proof}

\subsection{Proof of Non-stochastic Rate of \eqref{eq:goma_gamma0}}

\begin{theorem}\label{thm:goma_gamma0_det}
Assume $G$ is monotone and $L$-Lipschitz. Consider
\[
y_k=\beta_k x_0+(1-\beta_k)x_k,\qquad
x_{k+1}=y_k-\eta_k\,G(y_k),
\]
with $(y_{-1}=x_0)$.
Potential of \eqref{eq:potential} with the coefficients $a_{k+1} = \frac{b_k\eta_k}{2\beta_k(1 - \beta_k)}$ and $b_{k+1}=\tfrac{b_k}{1-\beta_k}$, $c_k=a_k$, and the choice of parameters:
$$\eta_k = \frac{c}{L}\sqrt{\frac{\beta_k}{2}}\,,\quad c \in (0, \frac{1}{\sqrt{2}}]\,,\quad \beta_k = \frac{1}{k+2},$$

admits, with $\widetilde c_{k+1}\ge0$ for $\theta=1$, the one-step decrease $V_k-V_{k+1}\ge \widetilde c_{k+1}L^2\|x_{k+1}-y_k\|^2$, hence $V_{k+1}\le V_k$ for all $k\ge1$. Consequently, for all $k\ge1$,
\[
\|G(x_k)\|^2
\;\le\;
\frac{C_{\mathrm{init}}\,L^2}{(k+1)^{3/2}}\,\|x_0-x^\star\|^2
\;+\;
\frac{C_\star\,L^2}{k+1}\|x_0-x^\star\|^2. 
\]

In particular, $\|G(x_k)\|^2=\mathcal{O}(1/(k+1))$.

\end{theorem}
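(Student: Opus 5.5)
The plan is to reuse Step 1 of the proof of Lemma~\ref{lem:det-reference} almost verbatim, since \eqref{eq:goma_gamma0} is exactly the deterministic reference recursion \eqref{eq:reference_traj}. Because $\gamma_k=0$ we have $x_{k+1}-y_k=-\eta_kG(y_k)$. Hence the Lipschitz step is
\[
\|G(x_{k+1})-G(y_k)\|^2+\theta L^2\|x_{k+1}-y_k\|^2\le(1+\theta)L^2\eta_k^2\|G(y_k)\|^2 .
\]
With $\theta=1$ this gives
\[
\|G(x_{k+1})\|^2-2\langle G(x_{k+1}),G(y_k)\rangle+(1-2L^2\eta_k^2)\|G(y_k)\|^2+L^2\|x_{k+1}-y_k\|^2\le0 .
\]
I will add $a_{k+1}$ times this inequality to the anchoring inequality \eqref{eq:det_anchor_ineq}, which comes from monotonicity between $x_k$ and $x_{k+1}$ and the two difference identities. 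The choice $a_{k+1}=\frac{b_k\eta_k}{2\beta_k(1-\beta_k)}$ makes the $\|G(x_{k+1})\|^2$ and $\langle G(x_{k+1}),G(y_k)\rangle$ terms cancel. Taking $c_k=a_k$, the $c$-terms of $V_k-V_{k+1}$ are $c_kL^2\|x_k-y_{k-1}\|^2\ge0$, which can be dropped, and $-c_{k+1}L^2\|x_{k+1}-y_k\|^2$. The latter is compensated by $a_{k+1}\theta L^2\|x_{k+1}-y_k\|^2$, so $\widetilde c_{k+1}=a_{k+1}(\theta-1)\ge0$ at $\theta=1$. What remains is the quadratic form
\[
a_k\|G(x_k)\|^2-2a_{k+1}(1-\beta_k)\langle G(x_k),G(y_k)\rangle+a_{k+1}(1-2L^2\eta_k^2)\|G(y_k)\|^2 .
\]

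Next I compute the coefficients for $\beta_k=\frac1{k+2}$ and normalize $b_0=1$, so that $b_k=k+1$ and $\eta_k=\frac{c}{L\sqrt2}(k+2)^{-1/2}$. Then $a_{k+1}=\frac{(k+1)\eta_k(k+2)^2}{2(k+1)}=\frac{c}{2\sqrt2L}(k+2)^{3/2}$, i.e.\ $a_k\propto(k+1)^{3/2}$ for $k\ge1$. The quadratic form is PSD iff
\[
a_k(1-2L^2\eta_k^2)\ge a_{k+1}(1-\beta_k)^2 .
\]
This reads $1-\frac{c^2}{k+2}\ge\big(\frac{k+1}{k+2}\big)^{1/2}$, i.e.\ $1-\frac{c^2}{m}\ge(1-\frac1m)^{1/2}$ with $m=k+2$. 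Since $(1-1/m)^{1/2}\le1-\frac1{2m}$, it suffices that $c^2\le\frac12$, which is exactly the stated range $c\le1/\sqrt2$. This gives $V_{k+1}\le V_k$ for $k\ge1$; as before, I start at $k=1$ to avoid the boundary where $a_0$ is not defined by the recursion.

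For the rate, I lower bound $V_k$ as in Step 2 of the reference lemma. Using monotonicity at $x^\star$ and Young's inequality,
\[
V_k\ge\frac{a_k}{2}\|G(x_k)\|^2-\frac{b_k^2}{2a_k}d_0^2 ,
\]
so
\[
\|G(x_k)\|^2\le\frac{2V_1}{a_k}+\Big(\frac{b_k}{a_k}\Big)^2d_0^2 .
\]
Here $(b_k/a_k)^2\propto\frac{L^2}{c^2(k+1)}$, which yields the $C_\star L^2/(k+1)$ term. For $V_1$: we have $x_1=x_0-\eta_0G(x_0)$ (since $y_0=x_0$) and $L\eta_0\le1$, so $\|G(x_1)\|\le2Ld_0$, $|\langle G(x_1),x_1-x_0\rangle|\le2\eta_0L^2d_0^2$, and $L^2\|x_1-y_0\|^2\le L^2\eta_0^2L^2d_0^2$. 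Hence $V_1\lesssim a_1L^2d_0^2$, and $2V_1/a_k\lesssim L^2d_0^2/(k+1)^{3/2}$, which gives the $C_{\mathrm{init}}$ term.

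The main obstacle is the PSD verification: the exponents must match exactly so that the $\frac{1}{2m}$ gain from $(1-\beta_k)^2$ beats the $\frac{c^2}{m}$ loss, and the constants ($\theta=1$, factor $2L^2\eta_k^2$) need to be tracked carefully. If the margin fails at small $k$ such as $m=2$, I would check those cases directly, as is done in \eqref{eq:qk_bound}.
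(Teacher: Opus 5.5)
Your proposal is correct and follows essentially the same route as the paper: you add $a_{k+1}$ times the $\theta=1$ Lipschitz inequality to the anchoring inequality so that the $\|G(x_{k+1})\|^2$ and cross terms cancel, check the same $2\times2$ PSD condition (you use $\sqrt{1-1/m}\le 1-\frac{1}{2m}$ where the paper squares to $\tfrac14\ge0$), telescope from $k=1$, and finish with the same Young-inequality lower bound and $V_1\lesssim a_1L^2d_0^2$ estimate. Your concern about small $m$ is unnecessary, since the tangent bound holds for every $m\ge1$.
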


\begin{proof}
From the update rules we obtain:
\begin{align}
x_{k+1}-x_k &= \beta_k(x_0-x_k) - \eta_k G(y_k), \label{eq:h1_goma}\\
x_{k+1}-x_k &= \frac{\beta_k}{1-\beta_k}(x_0 - x_{k+1}) - \frac{\eta_k}{1-\beta_k}G(y_k), \label{eq:h2_goma}\\
x_{k+1}-y_k &= -\eta_k G(y_k).\label{eq:h3_goma}
\end{align}

By monotonicity of $G$:
$$
  \langle G(x_{k+1})-G(x_k),\,x_{k+1}-x_k \rangle \ge 0,
$$
$$
  \langle G(x_{k+1}),\,x_{k+1}-x_k \rangle \ge \langle G(x_k),\, x_{k+1}-x_k\rangle.
$$
Using \cref{eq:h1_goma} and \cref{eq:h2_goma}, we write:
$$
  \langle G(x_{k+1}),\, \frac{\beta_k}{1-\beta_k}(x_0 - x_{k+1}) - \frac{\eta_k}{1-\beta_k}G(y_k) \rangle \ge \langle G(x_k),\,\beta_k(x_0-x_k) - \eta_k G(y_k) \rangle.
$$
Rearranging:
$$
  \frac{\beta_k}{1-\beta_k}\langle G(x_{k+1}),\, x_0 - x_{k+1} \rangle  \ge \beta_k\langle G(x_k),\,x_0-x_k \rangle - \eta_k\langle G(x_k),\, G(y_k) \rangle +  \frac{\eta_k}{1-\beta_k}\langle G(x_{k+1}),\, G(y_k)\rangle.
$$
Multiplying this inequality by $\frac{b_k}{\beta_k}$ and taking $b_{k+1} = \frac{b_k}{1 - \beta_k}$:
\begin{align}
    b_k\langle G(x_k),x_k-x_0\rangle - b_{k+1}\langle G(x_{k+1}),x_{k+1}-x_0\rangle
    \ge &
    \frac{b_k\,\eta_k}{\beta_k(1-\beta_k)}\langle G(x_{k+1}),G(y_k)\rangle
    - \frac{b_k\,\eta_k}{\beta_k}\langle G(x_k),G(y_k)\rangle. \label{eq:monotone_final}
\end{align}
Recall:
\begin{equation*}
     V_k = a_k\|G(x_k)\|^2 + b_k\langle G(x_k),\,x_k - x_0\rangle+ c_kL^2\|x_k - y_{k-1}\|^2.
\end{equation*}
Adding $a_k$ and $c_k$ terms to \cref{eq:monotone_final} gives
\begin{align}
V_k - V_{k+1} &\ge a_k\|G(x_k)\|^2 - a_{k+1}\|G(x_{k+1})\|^2
+ \frac{b_k\,\eta_k}{\beta_k(1-\beta_k)}\langle G(x_{k+1}),G(y_k)\rangle
    - \frac{b_k\,\eta_k}{\beta_k}\langle G(x_k),G(y_k)\rangle \notag\\
&\quad
+ c_kL^2\|x_k-y_{k-1}\|^2 - c_{k+1}L^2\|x_{k+1}-y_k\|^2. \label{eq:v_diff_goma}
\end{align} 
Using Lipschitz continuity and \cref{eq:h3_goma}, we have
\begin{align}
\|G(x_{k+1})-G(y_k)\|^2 + \theta L^2\|x_{k+1}-y_k\|^2
&\le (1+\theta)L^2\|x_{k+1}-y_k\|^2 \notag\\
&= (1+\theta)L^2\|-\eta_k G(y_k)\|^2 .\label{eq:split_gamma_xk}
\end{align}
Expanding the left-hand side
\[
\|G(x_{k+1})-G(y_k)\|^2 + \theta L^2\|x_{k+1}-y_k\|^2
= \|G(x_{k+1})\|^2 - 2\langle G(x_{k+1}),G(y_k)\rangle + \|G(y_k)\|^2
+ \theta L^2\|x_{k+1}-y_k\|^2,
\]
and setting \(M:=L^2(1+\theta)\), inequality \eqref{eq:split_gamma_xk} is equivalent to
\begin{align}
&\|G(x_{k+1})\|^2
- 2\langle G(x_{k+1}),G(y_k)\rangle
+ \bigl(1 - M\eta_k^2\bigr)\|G(y_k)\|^2 
+ \theta L^2\|x_{k+1}-y_k\|^2
\;\le\;0. \label{eq:ineq_cross_form_gamma0}
\end{align}
Recall 
\begin{align}
V_k - V_{k+1}
\;\ge\;&
a_k\|G(x_k)\|^2 - a_{k+1}\|G(x_{k+1})\|^2
+ \frac{b_k\,\eta_k}{\beta_k(1-\beta_k)}\langle G(x_{k+1}),G(y_k)\rangle \notag\\
&\quad
- \frac{b_k\,\eta_k}{\beta_k}\langle G(x_k),G(y_k)\rangle + c_kL^2\|x_k-y_{k-1}\|^2 - c_{k+1}L^2\|x_{k+1}-y_k\|^2.\label{eq:Vdiff_gamma0_base_again}
\end{align}
Multiply \eqref{eq:ineq_cross_form_gamma0} by $\tfrac{b_k\eta_k}{2\beta_k(1-\beta_k)}\;>\;0$
and add the result to the right-hand side of \eqref{eq:Vdiff_gamma0_base_again} to obtain:
\begin{align}
V_k - V_{k+1}
\;\ge\;&
a_k\|G(x_k)\|^2 \notag\\
&\quad
+ \Bigl(\tfrac{b_k\eta_k}{2\beta_k(1-\beta_k)} -a_{k+1}\Bigr)\|G(x_{k+1})\|^2 \notag\\
&\quad
-\tfrac{b_k\,\eta_k}{\beta_k}\,\langle G(x_k),G(y_k)\rangle \notag\\
&\quad
+\tfrac{b_k\,\eta_k}{2\beta_k(1-\beta_k)}\bigl(1-M\eta_k^2\bigr)\|G(y_k)\|^2 \notag\\
&\quad
+L^2\Bigl[c_k\|x_k-y_{k-1}\|^2
+\Bigl(\tfrac{\theta\,b_k\,\eta_k}{2\beta_k(1-\beta_k)} - c_{k+1}\Bigr)\|x_{k+1}-y_k\|^2\Bigr].
\end{align}
We set $a_{k+1} = \frac{b_k\eta_k}{2\beta_k(1 - \beta_k)}$, implying that $\Bigl(\tfrac{b_k\eta_k}{2\beta_k(1-\beta_k)} -a_{k+1}\Bigr)\|G(x_{k+1})\|^2 = 0$. Also, we have $b_{k+1} = \frac{b_k}{1-\beta_k}$. Define the \(2\times2\) quadratic form of (\(\|G(x_{k})\|,\|G(y_k)\|\)) with the following matrix:
\[
S_k \;=\;
\begin{pmatrix}
S_k^{11} & S_k^{12}\\
S_k^{12} & S_k^{22}
\end{pmatrix},
\]
\begin{align*}
S_k^{11}
=a_k,\qquad 
S_k^{12} 
=-\tfrac{b_k\,\eta_k}{2\beta_k},\qquad
S_k^{22}
=\tfrac{b_k\,\eta_k}{2\beta_k(1-\beta_k)}\bigl(1-M\eta_k^2\bigr).
\end{align*}

Thus, sufficient conditions for \(V_k-V_{k+1}\ge0\) are:

\begin{align*}
&\text{(i)}\quad S_k\succeq0
\;\;\Longleftrightarrow\;\;
S_k^{11}\ge0,\; S_k^{22}\ge0,\;S_k^{11}  S_k^{22}\ge (S_k^{12})^2,\\
&\text{(ii)}\quad \frac{\theta\,b_k\,\eta_k}{2\beta_k(1-\beta_k)} \;\ge\; c_{k+1} \;\ge\; 0.
\end{align*}

We set $\eta_k = c\sqrt{\frac{\beta_k}{M}}$ with $\beta_k = \frac{1}{k+2}$. We choose and check if all conditions work:

\[
S_k^{11} = a_k,\qquad 
S_k^{12} = -\frac{b_k c}{2\sqrt{M\,\beta_k}},\qquad 
S_k^{22} = \frac{b_k c}{2\sqrt{M\,\beta_k}(1-\beta_k)}\bigl(1-c^2\beta_k\bigr).
\]

\paragraph{Condition (i)}
We take $0<c\le\tfrac{1}{\sqrt2}$, so that $c^2\le\tfrac12$. Since
$\beta_k=\tfrac{1}{k+2}\le\tfrac12$, this gives $1-c^2\beta_k\ge1-\tfrac14>0$,
hence $S_k^{22}>0$ (and $S_k^{11}=a_k\ge0$). It remains to check the
determinant condition $S_k^{11}S_k^{22}\ge(S_k^{12})^2$, that is,
\[
a_k \;\ge\; \frac{(S_k^{12})^2}{S_k^{22}}
= \frac{b_k c}{2\sqrt{M\,\beta_k}}\cdot\frac{1-\beta_k}{1-c^2\beta_k}.
\]
Using $a_{k}=\frac{b_{k-1}c}{2\sqrt{M\beta_{k-1}}(1-\beta_{k-1})}$, canceling
$c,M>0$, and using $\frac{b_k}{b_{k-1}}=\frac{1}{1-\beta_{k-1}}$, this is
equivalent to
\[
\sqrt{\frac{\beta_k}{\beta_{k-1}}} \;\ge\;
\frac{1-\beta_k}{1-c^2\beta_k}.
\tag{*}
\]
Since $c^2\le\tfrac12$ we have $1-c^2\beta_k\ge1-\tfrac12\beta_k$, so it
suffices to prove $\sqrt{\beta_k/\beta_{k-1}}\ge(1-\beta_k)/(1-\tfrac12\beta_k)$.
Writing $x:=k+2$, so that $\beta_k=\tfrac1x$ and $\beta_{k-1}=\tfrac{1}{x-1}$,
this reads
\[
\sqrt{\frac{x-1}{x}}
\;\ge\;
\frac{x-1}{\,x-\tfrac12\,}.
\tag{$\ast\ast$}
\]
Squaring and cross-multiplying (all terms are positive), then cancelling
$x-1>0$, reduces $(\ast\ast)$ to
\[
\bigl(x-\tfrac12\bigr)^2 \;\ge\; x(x-1),
\qquad\text{i.e.}\qquad
\tfrac14\ge0,
\]
which always holds. This verifies the determinant condition $S_k^{11}S_k^{22}\ge(S_k^{12})^2$ for every $k\ge1$, where $a_k=\frac{b_{k-1}c}{2\sqrt{M\beta_{k-1}}(1-\beta_{k-1})}>0$. The boundary step $k=0$ is excluded: there $a_0=0$, so $S_0^{11}=0$ while $S_0^{12}\ne0$, and the determinant condition cannot hold. (Note the formula for $a_0$ would require $\beta_{-1}=1$, i.e.\ division by zero, so $a_0$ is not fixed by the recursion.) We handle $k=0$ separately by telescoping from $k=1$.

\paragraph{Condition (ii)}
If we set $c_k = a_k$, then using
  \[
  a_{k+1} = \frac{b_k\eta_k}{2\beta_k(1-\beta_k)},
  \]
and choosing $\theta = 1$, the condition (ii) will be satisfied.

\medskip

Let \(H_k:=a_k\|G(x_k)\|^2+b_k\langle G(x_k),x_k-x_0\rangle\).
We rewrite \(x_k-x_0=(x_k-x^\star)+(x^\star-x_0)\). By monotonicity of \(G\) and the fact that \(G(x^\star)=0\),
\[
\langle G(x_k),x_k-x^\star\rangle \ge 0.
\]
Hence,
\[
\begin{aligned}
H_k
&\ge a_k\|G(x_k)\|^2 + b_k\langle G(x_k),x^\star-x_0\rangle \\
&\ge a_k\|G(x_k)\|^2 - \frac{a_k}{2}\|G(x_k)\|^2 - \frac{b_k^2}{2a_k}\|x_0-x^\star\|^2 \\
&= \frac{a_k}{2}\|G(x_k)\|^2 - \frac{b_k^2}{2a_k}\|x_0-x^\star\|^2,
\end{aligned}
\]
where the second inequality follows from Young’s inequality. Therefore,
\[
\frac{a_k}{2}\|G(x_k)\|^2 \;\le\; H_k + \frac{b_k^2}{2a_k}\|x_0-x^\star\|^2
\;\le\; V_k + \frac{b_k^2}{2a_k}\|x_0-x^\star\|^2.
\]
The PSD condition~(i) requires $S_k^{11}=a_k>0$, which holds for every $k\ge1$ but fails at $k=0$ (where $a_0=0$). We therefore telescope the one-step decrease from $k=1$: $V_{k+1}\le V_k$ for all $k\ge1$, so $V_k\le V_1$ for all $k\ge1$. (Starting the telescoping at $k=1$ sidesteps the $a_0=0$ boundary step entirely, exactly as in the proof of \cref{lem:det-reference}.) Combining $V_k\le V_1$ with $\tfrac{a_k}{2}\|G(x_k)\|^2\le V_k+\tfrac{b_k^2}{2a_k}\|x_0-x^\star\|^2$ and dividing by $a_k/2$ gives, for all $k\ge1$,
\begin{equation}
    \|G(x_k)\|^2 \;\le\; \frac{2V_1}{a_k} + 2\Bigl(\frac{b_k}{a_k}\Bigr)^2\|x_0-x^\star\|^2. \label{eq:theorem_helper}
\end{equation}

With \(\beta_k=\tfrac{1}{k+2}\), we have \(1-\beta_k=\tfrac{k+1}{k+2}\), and the recursion $b_{k+1}=b_k/(1-\beta_k)$ telescopes to
\[
b_k=b_0\prod_{t=0}^{k-1}\frac{1}{1-\beta_t}=b_0\prod_{t=0}^{k-1}\frac{t+2}{t+1}=b_0\,(k+1).
\]
Substituting into $a_{k+1}=\tfrac{b_k\eta_k}{2\beta_k(1-\beta_k)}$ with $\eta_k=\tfrac{c}{L}\sqrt{\tfrac{\beta_k}{2}}$ yields
\[
a_{k+1}=\frac{b_0c}{2\sqrt2\,L}\,(k+2)^{3/2},
\qquad\text{i.e.}\qquad
a_k=\frac{b_0c}{2\sqrt2\,L}\,(k+1)^{3/2}\quad(k\ge1).
\]
Hence
\[
2\Bigl(\frac{b_k}{a_k}\Bigr)^2=\frac{16L^2}{c^2\,(k+1)},
\qquad
\frac{a_1}{a_k}=\frac{2\sqrt2}{(k+1)^{3/2}}.
\]

It remains to bound $V_1$. Since $\beta_0=\tfrac12$ and $y_0=x_0$, we have $x_1=x_0-\eta_0G(x_0)$ with $L\eta_0=\tfrac{c}{2}\le1$, so Lipschitzness gives $\|G(x_1)\|\le(1+\tfrac{c}{2})\|G(x_0)\|$ and $\|x_1-y_0\|=\eta_0\|G(x_0)\|$. Using $c_1=a_1$, the identity $b_1\eta_0=a_1$, $L^2\eta_0^2=\tfrac{c^2}{4}$, and $\|G(x_0)\|\le L\|x_0-x^\star\|$,
\[
V_1=a_1\|G(x_1)\|^2+b_1\langle G(x_1),x_1-x_0\rangle+a_1L^2\|x_1-x_0\|^2
\le a_1\Bigl[(1+\tfrac c2)^2+(1+\tfrac c2)+\tfrac{c^2}{4}\Bigr]L^2\|x_0-x^\star\|^2
\le 4\,a_1L^2\|x_0-x^\star\|^2,
\]
the last step using $c\le1$. Therefore
\[
\frac{2V_1}{a_k}\le 8L^2\,\frac{a_1}{a_k}\,\|x_0-x^\star\|^2
=\frac{16\sqrt2\,L^2}{(k+1)^{3/2}}\|x_0-x^\star\|^2,
\]
and substituting into \eqref{eq:theorem_helper},
\[
\|G(x_k)\|^2
\;\le\;
\frac{16\sqrt2\,L^2}{(k+1)^{3/2}}\,\|x_0-x^\star\|^2
\;+\;
\frac{16\,L^2}{c^2\,(k+1)}\|x_0-x^\star\|^2,
\]
which is the claimed bound with \(C_{\mathrm{init}}=16\sqrt2\) and \(C_\star=16/c^2\). Taking the largest step $c=\tfrac{1}{\sqrt2}$ gives
\[
\|G(x_k)\|^2
\;\le\;
\frac{16\sqrt2\,L^2}{(k+1)^{3/2}}\|x_0-x^\star\|^2
+\frac{32\,L^2}{k+1}\|x_0-x^\star\|^2,
\]
which proves the rate \(\|G(x_k)\|^2=\mathcal{O}((k+1)^{-1}) \).

\end{proof}

\section{Experimental Detail}
\subsection{Negative comonotonicity example ~\S\ref{sec:quadratic}}
we define
\emph{$\rho$-Comonotonicity},  for some $\rho\in\!\left(-\tfrac{1}{2L},\infty\right)$ the field $F$ satisfies
\[
\langle Fz-Fz',\, z-z'\rangle \;\ge\; \rho\,\|Fz-Fz'\|^2,\qquad \forall\, z,z'\in\mathbb{R}^d .
\]\label{ass:rho-comon}
where L is the lipschitz constant. $\rho<0$ is referred to as \emph{negative comonotonicity}.\\

We demonstrate the behavior of \eqref{eq:goma} with  $\eta_k = 0.2$, $\gamma_k = 0.8(1 - \beta_k)$, and $\beta_k = \frac{2}{k+6}$ on the following two–player quadratic saddle game with $L$-Lipschitz and \hyperref[ass:rho-comon]{$\rho$-comonotone} gradient:
\[
f(x,y)\;=\;\tfrac{\rho L^2}{2}\,x^2\;+\;L\sqrt{1-\rho^2L^2}\,xy\;-\;\tfrac{\rho L^2}{2}\,y^2,
\]
The particular instance used in our experiment \ref{fig:comparison} specializes the parameters to
\[
\rho=-\tfrac{1}{3},\qquad L=1,
\]
so it becomes
\[ f(x,y) = -\tfrac{1}{6}x^2 + \tfrac{2\sqrt{2}}{3}xy + \tfrac{1}{6}y^2 \]
which is negative comonotonicity ( $\rho<0$ ), hence it lies outside the scope of our theory (in particular, the comonotonicity/monotonicity assumptions do not cover $\rho<0$). \\
As a result, the $\mathcal{O}(1/k^2)$-like decay observed for \textsc{GOMA} on this benchmark is purely empirical: we do not claim a guarantee and do not provide a proof in this regime. Any advantage over GOMA in this experiment should be interpreted as evidence of practical robustness rather than a theoretical rate. 
Formal analysis of GOMA under negative comonotonicity is left as an open problem.\\

\subsection{Stochastic example ~\S\ref{sec:exp_stochastic}}\label{appx:exp_stochastic}
\textbf{Comparison with DSEG} \\
\emph{Notation.} For consistency with \citep{NEURIPS2020_ba9a56ce}, this subsection 
adopts their notation: $X_t$ for the iterate, $V$ for the operator, 
$\hat V_t = V(X_t) + Z_t$ for the stochastic oracle, $\mathcal{F}_t$ for 
the natural filtration, $\beta$ for the Lipschitz constant, and 
$\gamma_t, \eta_t$ for the DSEG exploration and update step sizes. 
We write $\kappa_{\mathrm{H}}$ for the noise-growth parameter from their 
Assumption ($\mathbb{E}[\|Z_t\|^2 \mid \mathcal{F}_t] 
\le (\sigma + \kappa_{\mathrm{H}}\|X_t - x^\star\|)^2$), which is distinct 
from the $\kappa$ in our Assumption \ref{ass:bounded_v}.

Using the analysis of DSEG \citep{NEURIPS2020_ba9a56ce}, we can see why the method only guarantees convergence to an arbitrarily small neighborhood for general monotone problems.

Their main recursion implies
\begin{equation}\label{eq:egp_rec}
\mathbb{E}\!\left[\|X_{t+1}-x^\star\|^2 \mid \mathcal{F}_t\right]
\le (1+C_t\kappa_{\mathrm{H}}^2)\|X_t-x^\star\|^2
- \gamma_t\eta_t\!\left(1-\gamma_t^2L^2-8\gamma_t\eta_t\kappa_{\mathrm{H}}^2\right)\!\|V(X_t)\|^2
+ C_t\sigma^2 ,
\end{equation}
where $C_t$ depends on $\gamma_t,\eta_t$ and problem constants.

Under their error bound assumption,
\[
\|V(x)\|\ge \tau\,\mathrm{dist}(x,X^\star),
\]
a sufficient condition for contraction is
\begin{equation}\label{eq:egp_contr}
\gamma_t\eta_t\!\left(1-\gamma_t^2L^2-8\gamma_t\eta_t\kappa_{\mathrm{H}}^2\right)\tau^2 > C_t\kappa_{\mathrm{H}}^2 .
\end{equation}

Since $1-\gamma_t^2L^2-8\gamma_t\eta_t\kappa_{\mathrm{H}}^2 \le 1$ and $C_t \ge 4\eta_t^2$, a necessary condition for \eqref{eq:egp_contr} is
\begin{equation}\label{eq:egp_ratio}
\frac{\gamma_t}{\eta_t} > \frac{4\kappa_{\mathrm{H}}^2}{\tau^2}.
\end{equation}

In their implementation, the step sizes are chosen as $\gamma_t=\alpha/\beta_{\rm alg}$ and $\eta_t=\alpha$, where $\beta_{\rm alg}\ge1$ is a fixed step-size ratio. This yields
\[
\frac{\gamma_t}{\eta_t}=\frac{1}{\beta_{\rm alg}}\le1,
\]
which contradicts the necessary condition \eqref{eq:egp_ratio} whenever $4\kappa_{\mathrm{H}}^2/\tau^2>1$.

In high-dimensional games, $\tau\ll1$ is typically very small and $\kappa_{\mathrm{H}}=1-\tfrac1n\approx1$, so that $4\kappa_{\mathrm{H}}^2/\tau^2 \gg 1$. Consequently, the contraction condition can only be satisfied for vanishingly small step sizes, which leads to an arbitrarily slow convergence rate for DSEG (see Fig.~\ref{fig:exp_stochastic}).

\subsection{Details of experiments, bilinear game ($\kappa =1$)}\label{appx:exp_stoch_detail_kappa_1}

For fair comparison, we performed grid search over key hyperparameters for each method. RAIN \citep{chen2024near} is used in Case~I, where its bounded-variance assumption holds; RAIN++\citep{chen2024near} is used in Case~II, where neither method is formally in scope but RAIN++'s additional inner prox step empirically offers better robustness. For GOMA, we search over $\eta_{\mathrm{coef}} \in \{0.01, 0.1, 0.3, 0.5\}$ in $\eta_t = \eta_{\mathrm{coef}} \sqrt{\beta_t}$, selecting $\eta_{\mathrm{coef}} = 0.3$. For E-Halpern, we tune the step size factor $\eta_0 \in \{0.5, 1.0, 1.5, 2.0\}$ and batch size cap $B \in \{100, 200, 500\}$, selecting $\eta_0 = 1.5$ and $B = 500$. For RAIN, we search $\tau \in \{0.05, 0.1, 0.2\}$, $\lambda \in \{0.5, 0.7, 0.9\}$, and $\gamma \in \{0.0005, 0.001, 0.005\}$, selecting $\tau = 0.1$, $\lambda = 0.7$, $\gamma = 0.001$. For DSEG and FEG , we apply their recommended hyperparameters. For Nesterov, we search the step size $\alpha$ and constant momentum $\beta$ over a $10 \times 11$ grid (including negative momentum), selecting $\alpha = 0.005,\ \beta = -0.1$.

\subsection{Details of experiments, finite-sum game ($\kappa >1$)}\label{appx:exp_stoch_detail}

\label{app:hyperparams}     

\paragraph{Verification of Assumption~\ref{ass:bounded_v}.}
We first check that the finite-sum oracle of \cref{sec:exp_stoch_multiplicative} satisfies Assumption~\ref{ass:bounded_v} with explicit constants, so that the experiment is genuinely covered by Theorem~\ref{thm:last_iter_stoch_rho0}. Write $z=(\theta,\varphi)$, let the stochastic oracle be $F_i(z)=[\,b_i+A_i\varphi;\ -(A_i^\top\theta+c_i)\,]$ with $i$ sampled uniformly, and let $F(z)=\mathbb{E}_i[F_i(z)]=[\,\bar b+\bar A\varphi;\ -(\bar A^\top\theta+\bar c)\,]$ be the mean operator, with $z^\star$ the solution $F(z^\star)=0$. Define the deviations $\delta A_i:=A_i-\bar A$, $\delta b_i:=b_i-\bar b$, $\delta c_i:=c_i-\bar c$, and the constants
\[
V_0:=\frac1n\sum_{i=1}^n\bigl(\|\delta b_i\|^2+\|\delta c_i\|^2\bigr),
\qquad
\Lambda^2:=\frac1n\sum_{i=1}^n\|\delta A_i\|_{\mathrm{op}}^2 .
\]
Since $\mathbb{E}_i[F_i(z)-F(z)]=0$, the cross term vanishes and
\[
\mathbb{E}_i\|F_i(z)\|^2=\|F(z)\|^2+\frac1n\sum_{i=1}^n\|F_i(z)-F(z)\|^2 .
\]
Each deviation $F_i(z)-F(z)=[\,\delta b_i+\delta A_i\varphi;\ -(\delta A_i^\top\theta+\delta c_i)\,]$ is affine in $z$, so by $\|u+v\|^2\le2\|u\|^2+2\|v\|^2$ and $\|\delta A_i w\|\le\|\delta A_i\|_{\mathrm{op}}\|w\|$,
\[
\frac1n\sum_{i=1}^n\|F_i(z)-F(z)\|^2
\;\le\; 2V_0+2\Lambda^2\|z\|^2 .
\]
Because $F$ is affine and $F(z^\star)=0$, we have $F(z)=[\,\bar A(\varphi-\varphi^\star);\ -\bar A^\top(\theta-\theta^\star)\,]$, hence $\|F(z)\|^2\ge\sigma_{\min}(\bar A)^2\,\|z-z^\star\|^2$ whenever $\bar A$ is nonsingular. Combining with $\|z\|^2\le2\|z-z^\star\|^2+2\|z^\star\|^2$ gives
\[
\mathbb{E}_i\|F_i(z)\|^2
\;\le\;
\underbrace{2V_0+4\Lambda^2\|z^\star\|^2}_{\sigma^2}
\;+\;
\underbrace{\Bigl(1+\tfrac{4\Lambda^2}{\sigma_{\min}(\bar A)^2}\Bigr)}_{\kappa}\|F(z)\|^2 ,
\]
which is exactly Assumption~\ref{ass:bounded_v} with $\kappa>1$ (state-dependent noise) and finite $\sigma^2$. For the specific construction, $A_i=\mathrm{diag}(0,\ldots,\lambda_i,\ldots,0)$ gives $\bar A=\tfrac1n\mathrm{diag}(\lambda_1,\ldots,\lambda_n)$, so $\sigma_{\min}(\bar A)=\tau/n>0$ and $\bar A$ is full rank; thus the construction lies in the $\kappa>1$ regime of Theorem~\ref{thm:last_iter_stoch_rho0}, whereas it violates the bounded-variance assumption ($\kappa=1$) of FEG, E-Halpern, and RAIN++.

\paragraph{Hyperparameters.}

All methods are tuned via grid search to minimize the final $\|F(z_k)\|^2$.

\paragraph{GOMA.}
$\beta_k = 1/(k+2)$, $\eta_k = c\sqrt{\beta_k}$ with $c \in \{1.0, 1.2, 1.5, 2.0, 2.3, 2.5\}$. Best: $c = 1.5$.

\paragraph{DSEG.}
$\eta_k^{(1)} = 0.1/(k+1)^{0.1}$, $\eta_k^{(2)} = 0.1/(k+1)^{0.9}$. according to \cite{NEURIPS2020_ba9a56ce}
\paragraph{FEG.}
$\alpha = 1$, $\rho = 0$, $\beta_k = 1/(k+1)$. according to \cite{lee2021fast}

\paragraph{RAIN++.}
$\mu \in \{0.5, 1, 2, 4\}$, $\tau \in \{0.05, 0.1, 0.2, 0.3, 0.5\}$, $\alpha \in \{0.1, 0.2, 0.4, 0.6\}$, $\gamma \in \{10^{-4}, 10^{-3}, 10^{-2}, 10^{-1}\}$, with 3 inner steps. Best: $(\mu, \tau, \alpha, \gamma) = (0.5, 0.1, 0.1, 10^{-4})$.

\paragraph{E-Halpern with PAGE.}
$\lambda_k = 1/(k+1)$, $p_k = 2/(k+1)$, $s_1 = 6$. Search: $\eta_0 \in \{0.01, 0.05, 0.1, 0.2, 0.5\}$, $M \in \{1, 3, 9, 27\} \cdot L^2$, $s_2 \in \{1, 2\}$. Best: $(\eta_0, M, s_1, s_2) = (0.1, 27L^2, 6, 1)$.

\paragraph{Nesterov.}
$y_k = z_k + \beta(z_k - z_{k-1})$, $z_{k+1} = y_k - \alpha \widehat{F}(y_k)$,
with $\alpha \in \{0.01, \ldots, 3.0\}$ and $\beta \in \{-0.9, \ldots, 0.9\}$.
Best: $(\alpha, \beta) = (0.01, -0.9)$

\subsection{Extra experiment: Monotone QP Lagrangian}
\label{appx:exp_monotone_qp}

\paragraph{Setup.}
We consider the Lagrangian of a linearly constrained quadratic minimization problem from \cite{pmlr-v139-yoon21d}:
\begin{equation}
    L(\mathbf{x}, \mathbf{y}) = \tfrac{1}{2}\mathbf{x}^\top \mathbf{H}\mathbf{x} - \mathbf{h}^\top \mathbf{x} - \langle \mathbf{A}\mathbf{x} - \mathbf{b}, \mathbf{y} \rangle,
\end{equation}
where $\mathbf{x}, \mathbf{y} \in \mathbb{R}^n$, $\mathbf{A} \in \mathbb{R}^{n \times n}$, $\mathbf{b} \in \mathbb{R}^n$, $\mathbf{H} \in \mathbb{R}^{n \times n}$ is positive semidefinite, and $\mathbf{h} \in \mathbb{R}^n$. This saddle function is convex--concave and smooth (due to the quadratic term $\tfrac{1}{2}\mathbf{x}^\top \mathbf{H}\mathbf{x}$); its saddle operator is monotone and $1$-Lipschitz.

We use the specific construction from  \cite{pmlr-v139-yoon21d}:
\begin{equation}
    \mathbf{A} = \frac{1}{4}\begin{bmatrix} -1 & 1 & & \\ & \ddots & \ddots & \\ & & -1 & 1 \\ & & & 1 \end{bmatrix} \in \mathbb{R}^{n \times n}, \quad
    \mathbf{b} = \frac{1}{4}\begin{bmatrix} 1 \\ 1 \\ \vdots \\ 1 \\ 1 \end{bmatrix} \in \mathbb{R}^{n}, \quad
    \mathbf{h} = \frac{1}{4}\begin{bmatrix} 0 \\ 0 \\ \vdots \\ 0 \\ 1 \end{bmatrix} \in \mathbb{R}^{n},
\end{equation}
and $\mathbf{H} = 2\mathbf{A}^\top\mathbf{A}$. \cite{pmlr-v139-yoon21d} shows that $\|\mathbf{A}\| \leq \tfrac{1}{2}$, which implies $\|\mathbf{H}\| \leq \tfrac{1}{2}$. Therefore this is a $1$-smooth saddle problem. We set $n=200$.

\paragraph{Methods.}
We compare the same set of methods as in the negative comonotone experiments.

\paragraph{Hyperparameter selection.}
\begin{itemize}
    \item \textbf{EAG-C / EAG-V}: Since this experiment is from \cite{pmlr-v139-yoon21d}, we use their reported parameters: $\alpha = 0.1265/L$ for EAG-C and $\alpha_0 = 0.618/L$ for EAG-V, with $\beta_k = 1/(k+2)$.
    \item \textbf{FEG} \cite{lee2021fast}: Following the FEG paper, we set $\alpha = 1/L$ with $\beta_k = 1/(k+1)$, which is the same choice used in both monotone and negative comonotone settings.
    \item \textbf{Anchored Popov}: $\alpha = 0.9/L$ with $\beta_k = 1/(k+1)$, selected via grid search over $\alpha = c/L$ with $c \in \{0.3, 0.4, 0.5, 0.618, 0.7, 0.8, 0.9, 1.0, 1.1, 1.2\}$.
    \item \textbf{EG}: $\alpha = 0.5/L$, the same as in \cite{pmlr-v139-yoon21d}.
    \item \textbf{DSEG} \cite{NEURIPS2020_ba9a56ce}: Since DSEG is primarily designed for the stochastic setting, we use the same parameters as in our negative comonotone experiments, following the setting of the FEG paper.
    \item \textbf{Nesterov}: $y_k = z_k + \beta(z_k - z_{k-1})$, $z_{k+1} = y_k - \alpha\,F(y_k)$, selected via grid search over $\alpha \in \{0.01, 0.05, 0.1, 0.3, 0.5, 0.7, 1.0, 1.3, 1.5, 2.0\}/L$ and constant momentum $\beta \in \{-0.9, -0.7, \ldots, 0.9\}$ (including negative momentum). Best: $\alpha = 1/L$, $\beta = -0.1$. The scheduled momentum $\beta_k = k/(k+3)$ diverged at every step size tested.
    \item \textbf{GOMA}: $\alpha = 1.25/L$ with $\eta_k = \alpha$, $\gamma_k = \alpha(1 - \beta_k)$, and $\beta_k = 1/(k+1)$, selected via grid search over $\alpha \in \{0.3, 0.4, 0.5, 0.618, 0.7, 0.8, 0.9, 1.0, 1.1, 1.2, 1.25, 1.3\}/L$.
\end{itemize}

\paragraph{Results.}
As shown in Figure~\ref{fig:monotone_qp}, all anchoring-based methods achieve the optimal $\mathcal{O}(1/k^2)$ rate, clearly separating from EG, DSEG, and Nesterov. GOMA outperforms the two-gradient-call methods (EAG-C, EAG-V, FEG) by a constant factor, which is expected given its single gradient call per iteration. Compared to Anchored Popov (which also uses a single call), GOMA achieves a better constant thanks to its two-time-scale structure that decouples the exploration and update step sizes.

\begin{figure}[h]
    \centering
    \includegraphics[width=0.6\textwidth]{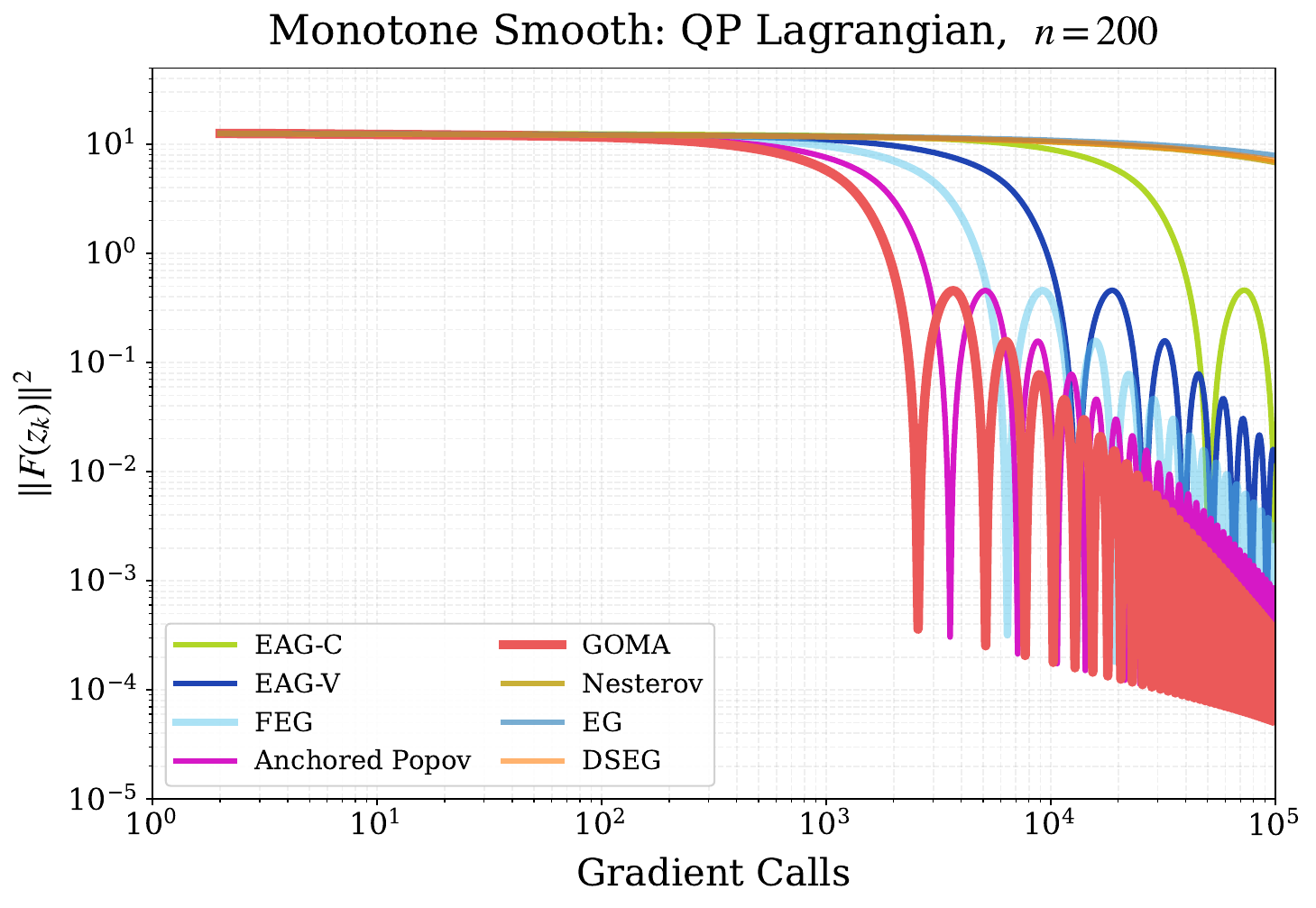}
    \caption{Monotone QP Lagrangian ($n=200$, $L=1$). Anchoring-based methods (EAG-C/V, FEG, Anchored Popov, GOMA) achieve the $\mathcal{O}(1/k^2)$ rate on $\|F(z_k)\|^2$, while EG, DSEG, and Nesterov exhibit a substantially slower decay and largely overlap near the top of the plot. GOMA (red) attains the lowest final $\|F(z_k)\|^2$.}
    \label{fig:monotone_qp}
\end{figure}




\end{document}